 \theoremstyle{plain}
\newtheorem{theo}{Theorem}[subsection]
\newtheorem{pr}[theo]{Proposition}
 \newtheorem{lem}[theo]{Lemma}
 \newtheorem{coro}[theo]{Corollary}
\theoremstyle{remark}
\newtheorem{rema}[theo]{Remark}
\theoremstyle{definition}
\newtheorem{defi}[theo]{Definition}
\newtheorem*{notat}{Notation}
 \newcommand\lan{\langle}
\newcommand\ra{\rangle}
\newcommand\bl{\bigl(} \newcommand\br{\bigl)}
\newcommand\ob{^{-1}}
\newcommand\dmge{DM^{eff}_{gm}{}}
\newcommand\dmgm{DM_{gm}}
\newcommand\zol{{\mathbb{Z}[\frac{1}{l}]}}
\newcommand\mg{\mathcal{M}}
\newcommand\obj{Obj}
\newcommand\mo{Mor}
\newcommand\id{id}
\newcommand\cu{\underline{C}}
\newcommand\du{{\underline{D}}}
\newcommand\eu{{\underline{E}}}
\newcommand\au{\underline{A}}
\newcommand\hrt{{\underline{Ht}}}
\newcommand\hw{{\underline{Hw}}}
\newcommand\z{{\mathbb{Z}}}
\newcommand\ql{{\mathbb{Q}_l}}
\newcommand\q{{\mathbb{Q}}}
\newcommand\p{\mathbb{P}}
\newcommand\al{\alpha}
\newcommand\be{\beta}
\newcommand\gam{\gamma}
\newcommand\de{\delta}
\newcommand\ns{\{0\}}
\DeclareMathOperator\inli{\varinjlim}
\newcommand\chow{Chow}
\newcommand\tcho{t_{Chow}}
\newcommand\htcho{\underline{Ht}_{Chow}}
\newcommand\ab{Ab}
\newcommand\spe{\operatorname{Spec}\,}
\DeclareMathOperator\imm{\operatorname{Im}}
\DeclareMathOperator\co{\operatorname{Cone}}
\DeclareMathOperator\prt{Pre-Tr}
\DeclareMathOperator\adfu{\operatorname{AddFun}}
\newcommand\trp{Tr^+}
\newcommand\gds{\mathfrak{D}_s}
\newcommand\dms{DM(S)}
\newcommand\dmcs{DM_c(S)}
\newcommand\dmck{DM_c(K)}
\newcommand\dmcu{DM_c(U)}
\newcommand\dmx{DM(X)}
\newcommand\dmcx{DM_c(X)}
\newcommand\dm{DM}
\newcommand\dmc{DM_c}
\newcommand\dmy{DM(Y)}
\newcommand\dmcy{DM_c(Y)}
\newcommand\chows{Chow(S)}
\newcommand\hwchow{{\underline{Hw}}_{\chow}}
\newcommand\chowx{Chow(X)}
\newcommand\chowy{Chow(Y)}
\newcommand\wchow{{w_{Chow}}}
\newcommand\tchow{{t_{Chow}}}
\newcommand\wchowb{{w_{Chow}^{big}}}
\newcommand\hwchowb{{{\underline{Hw}}_{Chow}^{big}}}
\newcommand\ch{Chow}
\newcommand\sred{{S_{red}}}
\newcommand\sss{{\mathcal{S}}}
\newcommand\on{\mathcal{OP}}
\newcommand\op{\mathcal{ON}}
\newcommand\oz{\mathcal{OZ}}
\newcommand\onx{\mathcal{OP}(X)}
\newcommand\opx{\mathcal{ON}(X)}
\newcommand\onal{\mathcal{OP}(\al)}
\newcommand\opal{\mathcal{ON}(\al)}
\newcommand\onsl{\mathcal{OP}(S^\al_l)}
\newcommand\opsl{\mathcal{ON}(S^\al_l)}
\newcommand\ff{\mathbb{F}}
\newcommand\fq{\mathbb{F}_q}
\newcommand\dbm{D^b_m}
\newcommand\dsh{DSH}
\newcommand\dshslz{D^bSh^{et}(S',\ql)}
\newcommand\dshl{D^bSh^{et}(-,\ql)}
\newcommand\hetl{{\mathcal{H}}^{et}_{\ql}{}}
\newcommand\dhsx{D^bSh^{et}(X,\ql)}
\newcommand\dhsxz{D^bSh^{et}(X_0,\ql)}
\begin{document}

 \title{
Weights for relative motives; relation to mixed complexes of sheaves}
 \author{Mikhail V. Bondarko
   \thanks{ 
 The work is supported by RFBR
(grants no. 11-01-00588 and 12-01-33057) and by the Saint-Petersburg State University research grant no. 6.38.75.2011.}} 
\author{Mikhail V. Bondarko}
\maketitle
\begin{abstract} 
The main goal of this paper is to define the {\it Chow weight structure} $\wchow$ for the category $\dmcs$ of (constructible) Beilinson motives over any excellent separated  
finite-dimensional base scheme $S$
 (this is the version of
Voevodsky's motives over $S$ described by Cisinski and Deglise). We also study the functoriality properties of 
$\wchow$ (they are very similar to those for weights of mixed complexes of sheaves, as described in \S5 of \cite{bbd}).

 As shown in a preceding paper,   (the existence of) $\wchow$ automatically yields a certain exact conservative weight complex functor  $\dmcs\to K^b(\chows)$. Here $\chows$ is the {\it heart} of $\wchow$;  it is 'generated' by motives of regular schemes that are projective over $S$. We also obtain that $K_0(\dmcs)\cong K_0(\chows)$ (and define a certain 'motivic Euler characteristic' for $S$-schemes).

 Besides, we obtain (Chow)-weight spectral sequences and filtrations for any (co)homology of motives; we discuss their relation to Beilinson's 'integral part' of motivic cohomology and with weights of  mixed complexes of sheaves. For the study of the latter we also introduce a new formalism of {\it relative weight structures}.
\end{abstract}
  \tableofcontents

 \section*{Introduction}
The goal of  this paper is to prove (independently from \cite{hebpo})  that the {\it Chow weight structure} $\wchow$ (as introduced in \cite{bws} for Voevodsky's motives over a perfect field $k$) can also be defined for the category $\dmcs$ of motives with rational coefficients over any ('reasonable') base scheme $S$ (in \cite{degcis}, where this category was described and studied, $\dmcs$ was called the category of Beilinson motives; one can also consider the 'big' category of $S$-motives $\dms\supset \dmcs$ here). The {\it heart}  $\hwchow$ of $\wchow$ is 'generated' by the motives of regular schemes that are projective over $S$ (tensored by $\q(n)[2n]$ for all $n\in \z$). We also study the functoriality properties of $\wchow$
(they are very similar to the functoriality of weights for mixed complexes of sheaves, as described in \S5 of \cite{bbd}).

As was shown in \cite{bws}, the existence of $\wchow$ yields several nice consequences.
In particular, there exists  a {\it weight complex} functor $t:\dmcs\to K^b(\chows)$, as well as  {\it Chow-weight} spectral sequences and  filtrations, and {\it virtual $t$-truncations} for any (co)homological functor $H:\dmcs\to \au$.

We also relate the weights for $S$-motives to the 'integral part' of motivic cohomology (as constructed in \cite{scholl};
cf. \S2.4.2 of \cite{bei85}), and with the  weights of mixed complexes of sheaves (as defined in \cite{bbd} and in \cite{huper}). In order to study the latter we  introduce a new formalism of {\it relative weight structures}.

Besides, we prove  that $K_0(\dmcs)\cong K_0(\chows)$, and define a certain 'motivic Euler characteristic' for  $S$-schemes.

Now we (try to) explain why the concept of a weight structure is important for motives. Recall  that weight structures are natural counterparts of $t$-structures for triangulated categories; they allow  'decomposing' objects of a triangulated $\cu$ into Postnikov towers whose 'factors' belong to the {\it heart} $\hw$ of $w$.
Weight structures  were introduced in \cite{bws} (and independently in \cite{konk}).  They were thoroughly studied and applied to motives (over perfect fields) in \cite{bws}; in \cite{bger}  a {\it Gersten} weight structure on a certain category $\gds\supset \dmge(k)$ was constructed; see also the survey preprint \cite{bsurv}.

The Chow weight structure yields certain weights for any (co)homology of motives.  Note here: 'classical' methods of working with motives often fail (at our present level of knowledge) since they usually depend on (various) 'standard' motivic conjectures.
 In particular,  the 'classical' way to define  weights for a motif $M$ is to construct a motif $M_s$ such that $H^i(M_s)\cong W_{s}H^i(X)$  (for all $i\in \z$ and a fixed $s$; here $H$ is either singular or \'etale cohomology, and $W_{s}(-)$ denotes the $s$-th level of the weight filtration for the corresponding mixed Hodge structure or mixed Galois module). It is scarcely possible to do this without constructing 
 a certain motivic $t$-structure on $DM(-)$ (whose existence is very much conjectural at the moment). 
 For instance, in order to find such $M_s$ for  motives of smooth projective varieties one requires the so-called Chow-Kunneth decompositions; hence this is completely out of reach at our present level of knowledge.

The usage of weight structures  (for motives) allows one to avoid these difficulties completely. To this end instead of $H^i(M_s)$ one considers $\imm(H^i(\wchow_{\ge -s-i}M)\to H^i(\wchow_{\ge -s-i-1}M))$ 
(this is the corresponding virtual $t$-truncation of $H$ applied to $M$; see \S\ref{svirt} below). 
 Here $\wchow_{\ge r}M$ for $r\in \z$ are certain motives which can  
 be (more or less) explicitly described in terms of $M$; note in contrast that there are no general conjectures that allow  constructing the motivic $t$-truncations and Chow-Kunneth decompositions explicitly.
Whereas this approach is somewhat 'cheating' for pure motives (since it usually gives no new information on their cohomology),  it yields several interesting results on mixed motives and their (co)homology. The first paper somewhat related to this approach is  \cite{gs} (its main result was generalized in \cite{sg}); there a weight complex functor that is essentially a (very) particular case of 'our' one was introduced (and related to the cohomology with compact support of varieties).

Another example when constructions naturally coming from weight structures  yield interesting results is described in \S\ref{schws} below.

 Now we 
discuss the relation of our paper to some other articles on relative motives.

 This text was written independently from the recent article \cite{hebpo} (that appeared somewhat earlier) on the same subject.
 We describe  similarities and distinctions between H\'ebert's paper and  our paper.
 In the current paper we introduce two different constructions $\wchow$. The first one (see Theorem \ref{twchow}) is based on Theorem
 4.3.2(II) of \cite{bws}; the same method was also used in (Theorem 3.3 of) \cite{hebpo} (so, the plans of the proof coincide).
  In order to apply it one has to specify $\hwchow$ and prove that it is 'negative' in $\dmcs$ (i.e., certain morphism groups are zero). Now, our description of $\hwchow$ is just slightly distinct from the one of 
 ibid. The two proofs of negativity are substantially different. Our version of the statement is Lemma \ref{l4onepo}(I.1); in the opinion of the author its proof is simpler than the one of the (parallel) Theorem 3.2 of \cite{hebpo}. Our statement also 
 contains a formula for certain non-zero (and 'interesting') morphism groups. On the other hand, loc. cit. has a serious advantage over our result: in the notation of our Lemma, it does not require $f$ and $g$ to be quasi-projective. As a result,  Theorem 3.7 of \cite{hebpo} describes the functoriality properties of $\wchow$ with respect to not necessarily quasi-projective morphisms in contrast to our Theorem \ref{tfunctwchow}(II) (whereas the proofs of these two theorems are quite similar; another distinction is that we do not study the behaviour of $\wchow$ under tensor products and inner homomorphisms).
 
The second definition of $\wchow$ is given in \S\ref{schowunr}; it has no analogue in \cite{hebpo} (and also in the papers that we mention below; yet note that the methods of the proof of the  functoriality properties 
of the 'first' construction of $\wchow$ easily yield the proof of  Proposition \ref{pfunctwchowa} also). The idea of the second definition is to start with the 'classical' definition of Chow motives over perfect fields, and 'glue' the corresponding weight structures into a weight structure 
 over an arbitrary excellent separated finite-dimensional $S$ (that is not necessarily reasonable in the sense of Definition \ref{dreas} below). Since the residue fields of $S$ do not have to be perfect, in \S\ref{schowunr} we have to consider compositions of smooth projective morphisms with finite universal homeomorphisms. The advantage of the gluing construction is that it works over  a wider class of base schemes; on the other hand it does not yield an explicit description of (the whole) $\hwchow$. Note also that both smooth projective morphisms and finite universal homeomorphisms yield (higher) \'etale direct image functors that respect the local constancy of sheaves; so their consideration is related to the study of 'explicit constructibility' for the \'etale (co)homology of motives.

 The author should also note that he would have probably not noticed that the category $\chows=\hwchow$ has a nice description (over a reasonable $S$) if not for the papers \cite{haco} and \cite{sg}. In \cite{haco} the definition of  Chow motives over $S$ was given as a part of a large program of study of relative motives and intersection cohomology of varieties (that relies on several hard 'motivic' conjectures). In \cite{sg} certain analogues of (our) Chow motives were used in order to define (a sort of) weight complexes for $S$-schemes (only for one-dimensional $S$; cf. \S\ref{swc} below). Yet note that 
 these two articles do not treat (any) triangulated categories of 'mixed' motives over $S$;  
 hence it is difficult to apply them to cohomology of 'general' (finite type) $S$-schemes.

This paper (also) benefited from \cite{scholfcohom}. In ibid. a 'mixed motivic' description of Beilinson's 'integral part' of motivic cohomology (as constructed in \cite{scholl}; see also \S2.4.2 of \cite{bei85}) was proposed. The formulation of the main result of \cite{scholfcohom} uses the so-called intermediate extensions of mixed motives; so it heavily relies on the (conjectural!) existence of a 'reasonable' motivic $t$-structure on $\dmcs$; note that we describe an alternative construction of this 'part' that does depend on any conjectures (in \S\ref{schws} below; our construction vastly generalizes the one Scholl).

Lastly, we note that several of the results of the current paper were applied in \cite{bmm}, where the existence of the motivic $t$-structure on $\dmcs$ and of certain 'weights' for its heart were reduced to (certain) standard motivic conjectures over universal domains. Besides, analogues of some of the results of the current paper for motives with integral coefficients were proved in 
\cite{brmz}.

Now we list the contents of the paper. More details can be found at the beginnings of sections.

 In \S\ref{sprem} we recall the basic properties of Beilinson motives and weight structures. Most of the statements of the section are contained in \cite{degcis} and \cite{bws}; yet we also prove some new results.

 In \S\ref{swchow} we define the category $\chows$ of Chow motives over $S$ (related definitions can be found in \cite{haco}, \cite{hebpo}, and \cite{sg}). By definition, $\chows\subset \dmcs$; since $\chows$ is also negative in it and generates it (if $S$ is 'reasonable') we immediately obtain (using Theorem 4.3.2 of \cite{bws}) that there exists a weight structure $\wchow$ on $\dmcs$ whose heart is $\chows$. Next we study the 'functoriality' of $\wchow$ (with respect to the functors of the type $f^*,f_*,f^!,f_!$, for $f$ being a quasi-projective or, more generally, a 
 {\it smoothly embeddable} morphism of schemes).
  Our functoriality statements are parallel to the 'stabilities' 5.1.14 of \cite{bbd} (we 'explain' this similarity in the succeeding section). We also prove that Chow motives can be 'lifted from open subschemes up to retracts'; this statement could be called (a certain) 'motivic resolution of singularities'.
 Next we prove that $\wchow$ can be described 'pointwisely' (cf. \S5.1.8 of \cite{bbd}). Besides, we describe an alternative method for the construction of $\wchow$ (over arbitrary excellent separated finite-dimensional schemes; these don't have to be   reasonable). This method uses stratifications and 'gluing' of weight structures; it makes this part of the paper
  somewhat parallel to the study of weights of mixed complexes of sheaves  in \S5 of \cite{bbd}.

\S\ref{sapcoh} is dedicated to the applications of our main results.
 The existence of $\wchow$ automatically yields the existence of a conservative exact {\it weight complex} functor $\dmcs\to K^b(\chows)$, and the fact that $K_0(\dmcs)\cong K_0(\chows)$. We also  define a certain 'motivic Euler characteristic' for $S$-schemes.

 Next we recall 
 that $\wchow$ yields  functorial {\it Chow-weight} spectral sequences and filtrations.
 A very particular case of Chow-weight filtrations yields Beilinson's 'integral part' of motivic cohomology. 
 Chow-weight spectral sequences yield the existence of weight filtrations for 'perverse \'etale homology' of motives over finite type $\q$-schemes (this is not automatic for mixed perverse sheaves in this setting). We study in more detail the perverse \'etale homology of motives when $S=X_0$ is a variety over a finite field $\fq$. It is well known that mixed complexes of sheaves start to behave better if we extend scalars from $\fq$ to $\ff$ (this is the algebraic closure of $\fq$), i.e., pass to sheaves over $X=X_0\times_{\spe \fq}\spe \ff$. We (try to) axiomatize this situation and introduce the concept of a {\it relative weight structure}. Relative weight structures have several properties that are parallel to properties of 'ordinary' weight structures. The category $\dbm(X_0)$ (of mixed complexes of sheaves) possesses a relative weight structure whose heart is the class of (pure) complexes of sheaves of weight $0$. Besides, the \'etale realization functor $\dmc(X_0)\to \dbm(X_0)$ is {\it weight-exact}.

 In 
the Appendix 
 we recall the definition of a $t$-structure {\it adjacent} to a weight structure. Then we prove the existence of a (Chow)  $t$-structure $\tchow$ for $\dms$ that is adjacent to the Chow weight structure on it. We also study the functoriality of  $\tchow$ and relate it to   {\it virtual $t$-truncations} (for cohomological functors from $\dmcs$).

 The author is deeply grateful to prof. F. Deglise, prof. D. H\'ebert,  prof. M. Levine,  prof. I. Panin, and to the referees for their helpful comments. He would also like to express his gratitude to the officers and the guests of the Max Planck Institut f\"ur Mathematik, as well as to prof. M. Levine and to the Essen University for the wonderful working conditions during the work on this paper.

\begin{notat}

$\ab$ is the category of abelian groups.

For categories $C,D$ we write $C\subset D$ if $C$ is a full 
subcategory of $D$.

 For a category $C,\ X,Y\in\obj C$, we denote by
$C(X,Y)$ the set of  $C$-morphisms from  $X$ to $Y$.
We will say that $X$ is  a {\it
retract} of $Y$ if $\id_X$ can be factored through $Y$. Note: if $C$ is triangulated or abelian 
then $X$ is a  retract of
$Y$ if and only if $X$ is its direct summand.


 For an additive $D\subset C$ the subcategory $D$ is called
{\it Karoubi-closed}
  in $C$ if it
contains all retracts  of its objects in $C$. The full subcategory of $C$ whose objects are all retracts of objects of $D$ (in $C$) will be called the {\it Karoubi-closure} of $D$ in $C$.

$M\in \obj C$ will be called compact if the functor $C(M,-)$
commutes with all small coproducts that exist in $C$. In this paper (in contrast with the previous ones) we will only consider compact objects in those categories that are closed with respect to arbitrary small coproducts. 

$\cu$ below will always denote some triangulated category; usually it will
be endowed with a weight structure $w$ (see Definition \ref{dwstr}
below).

We will use the term 'exact functor' for a functor of
triangulated categories (i.e.,  for a functor that preserves the
structures of triangulated categories). 
We will call a covariant (resp. contravariant)
additive functor $H:\cu\to \au$ for an abelian $\au$
 {\it homological} (resp. {\it cohomological}) if
it converts distinguished triangles into long exact sequences.

For $f\in\cu (X,Y)$, $X,Y\in\obj\cu$, we will call the third vertex
of (any) distinguished triangle $X\stackrel{f}{\to}Y\to Z$ a cone of
$f$; recall that distinct choices of cones are connected by
(non-unique) isomorphisms.

We will often specify a distinguished triangle by two of its
morphisms.

 For a set of
objects $C_i\in\obj\cu$, $i\in I$, we will denote by $\lan C_i\ra$
the smallest strictly full triangulated subcategory containing all $C_i$; for
$D\subset \cu$ we will write $\lan D\ra$ instead of $\lan \obj
D\ra$. We will call the  Karoubi-closure of $\lan C_i\ra$ in $\cu$ the {\it triangulated category  generated by $C_i$}.

For $X,Y\in \obj \cu$ we will write $X\perp Y$ if $\cu(X,Y)=\ns$.
For $D,E\subset \obj \cu$ we will write $D\perp E$ if $X\perp Y$
 for all $X\in D,\ Y\in E$.
For $D\subset \cu$ we will denote by $D^\perp$ the class
$$\{Y\in \obj \cu:\ X\perp Y\ \forall X\in D\}.$$
Dually, ${}^\perp{}D$ is the class
$\{Y\in \obj \cu:\ Y\perp X\ \forall X\in D\}$.

We will say that some $C_i$, $i\in I$, {\it weakly generate} $\cu$ if for
$X\in\obj\cu$ we have: $\cu(C_i[j],X)=\ns\ \forall i\in I,\
j\in\z\implies X=0$ (i.e., if $\{C_i[j]\}^\perp$ contains only
 zero objects).

 $D\subset \obj \cu$  will be
called {\it extension-stable}
    if for any distinguished triangle $A\to B\to C$
in $\cu$ we have: $A,C\in D\implies
B\in D$.

We will call the smallest
Karoubi-closed  extension-stable subclass of $\obj
\cu$ containing $D$ the {\it envelope} of $D$.

Below all schemes will be
excellent separated 
of finite Krull dimension. 
Often our schemes will be {\it reasonable}; see Definition \ref{dreas} below.

 Morphisms of schemes 
by default will be  of finite type. 
We will say that $X/S$ is 
{\it smoothly embeddable} if it can be embedded into a smooth $X'/S$ (in particular, such an $X$ is quasi-finite over $S$).
Certainly, any quasi-projective $S$-scheme is smoothly embeddable 
(over $S$).

We will sometimes need certain stratifications of a scheme $S$. Recall that  a 
 stratification
 $\al$  is a presentation of  $S$ as $\cup S_l^\al$,
 where $S_l^\al$, $1\le l\le n$, are pairwise disjoint locally closed subschemes of $S$. Omitting $\al$, we will denote by
$j_l:S_l^\al\to S$  the corresponding immersions.
 We do not demand the closure of each $S_l^\al$ to be  the union of strata (though we could do this); we will only assume that 
 each $S_l^{\al}$ is open in $\cup_{i\ge l} S_i^{\al}$. 
 
Below we will identify a Zariski point (of a scheme $S$) with the spectrum of its residue field.

 \end{notat}

\section{Preliminaries: relative motives and weight structures}\label{sprem}


In \S\ref{sbrmot}  we recall some of basic properties of Beilinson motives over $S$ (as considered in \cite{degcis}; we also deduce certain results that were not stated in ibid. explicitly).

In \S\ref{sws} we recall some basics of the theory of weight structures (as developed in \cite{bws}); we also prove some new lemmas on the subject.

\subsection{Beilinson motives (after Cisinski and Deglise)}\label{sbrmot}

We list some of the  properties of the triangulated categories of Beilinson motives
(this is the version of relative
Voevodsky's motives with rational coefficients described by Cisinski and Deglise).
Sometimes we will need the following restriction on schemes.

\begin{defi}\label{dreas}
We will call a separated scheme $S$ {\it reasonable} if there exists an excellent 
separated
scheme $S_0$ of dimension lesser than or equal to $2$ 
such that $S$ is 
of finite type over $S_0$.
\end{defi}

\begin{pr}\label{pcisdeg}

Let $X,Y$ be  (excellent separated finite dimensional) 
schemes; $f:X\to Y$ is a  
finite type morphism. 
\begin{enumerate}

\item\label{imotcat} For any  $X$ a tensor triangulated $\q$-linear category $\dmx$ with the unit object $\q_X$ is defined; it is closed with respect to arbitrary small coproducts.

$\dmx$ is the category of  {\it Beilinson motives} over $X$, as described (and thoroughly studied) in \S14--15 of \cite{degcis}.

\item\label{imotgen}
The (full) subcategory $\dmcx\subset \dmx$ of compact objects is tensor triangulated, and  $\q_X\in \obj \dmcs$. $\dmcx$ weakly generates $\dmx$.

\item\label{iidcompl} All $\dmx$ and $\dmcx$ are idempotent complete.

\item\label{imotfun}  For any  $f$ 
the following functors
 are defined:
$f^*: \dm(Y) \leftrightarrows \dmx:f_*$ and $f_!: \dmx \leftrightarrows \dmy:f^!$; $f^*$ is left adjoint to $f_*$ and $f_!$ is left adjoint to $f^!$.

We call these the {\bf motivic image functors}.
Any of them (when $f$ varies) yields a  $2$-functor from the category of 
(separated finite-dimensional excellent) schemes
with  morphisms of finite type to the $2$-category of triangulated categories.
Besides,
all motivic image functors preserve compact objects (i.e., they could be restricted to the subcategories $\dmc(-)$); they also commute with arbitrary (small) coproducts. 

\item\label{iexch} 
For a Cartesian square
of finite type 
morphisms
$$\begin{CD}
Y'@>{f'}>>X'\\
@VV{g'}V@VV{g}V \\
Y@>{f}>>X
\end{CD}$$
we have $g^*f_!\cong f'_!g'{}^*$ and $g'_*f'{}^!\cong f^!g_*$.


\item\label{iupstar}  $f^*$ is symmetric monoidal; $f^*(\q_Y)=\q_X$.

\item\label{itate}  For any $X$ there exists a
Tate object $\q(1)\in\obj\dmcx$; tensoring by it yields an  exact Tate twist functor $-(1)$ on $\dmx$.
This functor is an auto-equivalence of $\dmx$; we will denote the inverse functor by $-(-1)$.

 Tate twists
commute with all motivic image functors mentioned (up to an isomorphism of functors).

Besides, for $X=\p^1(Y)$ there is a functorial isomorphism $f_!(\q_{\p^1(Y)})\cong \q_Y\bigoplus \q_Y(-1)[-2]$.

\item \label{ipur}

$f_*\cong f_!$ if $f$ is proper;
$f^!(-)\cong f^*(-)(s)[2s]$ 
 if $f$ is smooth  
 (everywhere) of relative dimension $s$. 

If $f$ is an open immersion, we just have $f^!=f^*$.

\item \label{ipura} If $i:S'\to S$ is an 
 immersion of regular schemes everywhere of codimension $d$, then $\q_{S'}(-d)[-2d]\cong i^!(\q_S)$.

\item\label{iglu}

If $i:Z\to X$ is a closed immersion, $U=X\setminus Z$, $j:U\to X$ is the complementary open immersion, then
the motivic image functors yield a {\it gluing datum} for $\dm(-)$  (in the sense of \S1.4.3 of \cite{bbd}; see also Definition 8.2.1 of \cite{bws}). That means that (in addition to the adjunctions given by assertion \ref{imotfun}) the following statements are valid.

(i)  $i_*\cong i_!$ is a full embeddings; $j^*=j^!$ is isomorphic to the
localization (functor) of $\dmx$ by
$i_*(\dm(Z))$.

(ii) For any $M\in \obj \dmx$ the pairs of morphisms $j_!j^!(M) \to M
\to i_*i^*(M)$ and $i_!i^!(M) \to M \to j_*j^*(M)$ can be completed to
distinguished triangles (here the connecting
morphisms come from the adjunctions of assertion \ref{imotfun}).

(iii) $i^*j_!=0$; $i^!j_*=0$.

(iv) All of the adjunction transformations (i.e., the corresponding units and counits) $i^*i_*\to 1_{\dm(Z)}\to
      i^!i_!$ and $j^*j_*\to 1_{\dm(U)}\to j^!j_!$ are isomorphisms of
      functors.

\item\label{igluc} For the subcategories $\dmc(-)\subset \dm(-)$
the obvious analogue of the previous assertion is fulfilled.


\item \label{itr}
If $f$ is a finite universal homeomorphism, $f^*$ is an equivalence of categories.

 \item\label{igenc}

 If $S$ is reasonable (see Definition \ref{dreas}), $\dmcs$ (as a triangulated category) is generated by $\{ g_*(\q_X)(r)\}$, where $g:X\to S$ runs through all projective morphisms  (of finite type) such that $X$ is regular, $r\in \z$.  

\item\label{icont}
Let $S$ be a scheme which is the limit of an essentially affine (filtering) projective system of  schemes $S_\be$ (for $\be\in B$). Then $\dmcs$ is isomorphic to the $2$-colimit 
 of the categories $\dmc(S_\be)$; in these isomorphisms all the connecting functors are given by the corresponding motivic inverse image functors (cf.  Remark \ref{ridmot}(2) below).

 \item\label{ibormo}
 If $X$ is regular (everywhere) of dimension $d$, $i:Z\to X$ is a closed embedding, $p,q\in \z$, then $\dmx(\q_X,i_!i^!(\q_Z)[p](q)) \cong \dm(Z)(\q_Z,i^!(\q_X)(q)[p])$
 is isomorphic to
$\ch_{d-q}(Z,2q-p)\otimes\q$ (which we define  as $Gr_{d-q}^{\gamma}K'_{2q-p}(Z)\otimes\q)$.  In particular, this morphism group is zero   if $p>2q$.

\end{enumerate}

\end{pr}
\begin{proof}
Almost all of these properties of Beilinson motives
are stated in (part C of) the Introduction of ibid.; the proofs are mostly contained in \S1, \S2, \S14, and \S15 of ibid. 

So, we will only prove those assertions that are not stated in ibid. (explicitly).

For (\ref{iidcompl}):  Since $\dmx$  is closed with respect to arbitrary small coproducts, it is idempotent complete by Proposition 1.6.8 of \cite{neebook}. Since a retract of a compact object is compact also, $\dmcx$ is also idempotent complete.

Since $i^!=i^*$ if $i$ is an open immersion, and $i^*(\q_S)=\q_{S'}$, it suffices to prove (\ref{ipura}) for $i$ being a closed immersion. In this case it is exactly Theorem 4 of \cite{degcis}.


We should also prove (\ref{igluc}). Assertion \ref{iglu} immediately yields everything except the fact that the (categoric) kernel of $j^*:\dmcx\to \dmc(U)$ is contained in $i_*(\dmc(Z))$. So, we should prove that $i_*(\obj \dm(Z))\cap \obj \dmcx=i_*(\obj \dmc(Z))$. This is easy, since $i^*i_*\cong 1_{\dm(Z)}$ and $i_*i^*$ preserves compact objects.


Assertion \ref{itr} 
is given by Proposition 2.1.9  of \cite{degcis} (note that we can apply the result cited by 
Theorem 14.3.3 of ibid.). 

Assertion \ref{igenc} is immediate from Proposition 
15.2.3 of ibid. 

It remains to prove (\ref{ibormo}). 
Combining (13.4.1.3) and Corollary 14.2.14  of ibid., we obtain that the groups in question are isomorphic to the $q$-th factor of the $\gamma$-filtration on $K^Z_{2q-p}(X)\otimes\q$ (of the $K$-theory of $X$ with support in $Z$). 
By Theorem 7 of \cite{souoper},
 this is the exactly the 
$d-q$-th
factor of the 
 $\gamma$-filtration of $K'_{2q-p}(Z)\otimes\q$.

\end{proof}

\begin{rema}\label{ridmot}

1. In \cite{degcis} for a smooth $f:X\to Y$  the object $f_!f^!(\q_Y)$ was denoted by $\mg_Y(X)$ (cf. also Definition 1.3 of \cite{scholfcohom}; yet note that in loc. cit. cohomological motives are considered, this interchanges $*$ with $!$ in the notation for motivic functors).
 We will not usually need this notation below (yet cf. Remarks \ref{rtwchow}(1) and \ref{rintel}(4)).


 2. In \cite{degcis} the functor $f^*$
  was constructed for any 
   morphism $f$ not necessarily of finite type; it  preserves compact objects (see 
   Theorem 15.2.1(1) of ibid.). Besides, for such an $f$ and any 
   finite type $g:X'\to X$ we have an isomorphism $f^*g_!\cong g'_!f'{}^*$ (for the corresponding $f'$ and $g'$; cf. part \ref{iexch} of the proposition).

 Below the only morphisms of infinite type that we will be interested in are limits of immersions (more precisely, for a Zariski point $K$  
 of a scheme $S$  we will consider the natural morphism $j_K:K\to S$; cf. Notation).

 Now note: if $f$ is a pro-open immersion, then one can define $f^!=f^*$.
 So, one can also define $j_K^!$ that preserves compact objects (cf. also \S2.2.12 of \cite{bbd}). The system of these functors satisfy the second assertion in part \ref{iexch} of the proposition (for a finite type 
  $g$).

3. If $f$ is a finite universal homeomorphism, then 
$f^*$ is an equivalence of categories by assertion \ref{itr} of our proposition. Hence its right adjoint $f_*$ is an equivalence also. Since $f^!$ is right adjoint to $f_!=f_*$, we conclude that $f_!$ and $f^!$ are equivalences too.

Similarly we obtain that $f^*\q_Y\cong f^!\q_Y\cong \q_X$ and $f_*\q_X= f_!\q_X\cong \q_Y$.


4. Most of the properties of Beilinson motives (as listed above) also hold for various 'sheaf-like' categories. In particular, the methods of the current paper could probably be used in order to prove
the existence of the weight structure $w$ constructed in Proposition 2.3(I) of \cite{btrans} for M. Saito's mixed Hodge modules (see  \cite{sai}). Yet the properties of mixed Hodge modules listed in \S1 of ibid. yield the existence of $w$ immediately. 

5. A nice concise exposition of the properties of Beilinson motives (that also follows \cite{degcis}) 
can  be found in \S2 of \cite{hebpo}.

\end{rema}

The following statements were not proved in \cite{degcis} explicitly; yet they follow from Proposition \ref{pcisdeg} easily. Below we will mostly need assertion I.1 in the case where
$g$ is
 projective; note that in this case
$g_*(\q_Y)\cong  g_!(\q_Y)$.

\begin{lem}\label{l4onepo}

I.1. 
Let $f:X\to S$ and $g:Y\to S$ be 
smoothly embeddable morphisms (see the Notation), $r,b,c\in \z$.
Assume  $X$ is connected and $Y$ is connected and regular.

Then   there exists an integer $e$ such that   $\dms(f_!(\q_X)(b)[2b],g_*(\q_Y)(c)[r+2c])\cong CH_{e}(X\times_S Y,-r)$ (cf. Proposition \ref{pcisdeg}(\ref{ibormo}) for the definition of the latter).
 In particular, $f_!(\q_X)(b)[2b]\perp g_*(\q_Y)(c)[r+2c]$ if $r>0$.

2. Let $i:S'\to S$ be an 
immersion of regular schemes everywhere of codimension $d$; let $g$ be smooth. 
 Then for $Y'=Y_{S'}$ and $g'=g_{S'}$ we have $i^!g_*(\q_Y)\cong g'_*(\q_{Y'})(-d)[-2d]$.

II Let $S=\cup S_l^\al$ be a stratification. Then for any $M,N\in \obj \dms$ there exists a filtration of $\dms(M,N)$  whose factors are certain subquotients of $\dm(S_l^\al)(j_l^*(M), j_l^!(N))$.

\end{lem}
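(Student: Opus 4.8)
All three assertions reduce, via the six-functor formalism of Proposition \ref{pcisdeg}, to purity together with the $K$-theoretic input of Proposition \ref{pcisdeg}(\ref{ibormo}). For I1 I would set $W=X\times_S Y$ and let $p\colon W\to X$, $q\colon W\to Y$ be the projections. First I strip the right-hand object using $g^*\dashv g_*$, then apply the base-change isomorphism $g^*f_!\cong q_!p^*$ of Proposition \ref{pcisdeg}(\ref{iexch}), the identity $p^*\q_X=\q_W$, and finally $q_!\dashv q^!$, to obtain
\[\dms(f_!(\q_X)(b)[2b],g_*(\q_Y)(c)[r+2c])\cong \dm(W)(\q_W,q^!(\q_Y)(c-b)[r+2c-2b]).\]
Since $f$, hence $q$, is quasi-projective, I would factor $q=h\circ\iota$ with $\iota\colon W\to V$ a closed immersion and $h\colon V\to Y$ smooth of relative dimension $N$, taking $V$ to be an open subscheme of $\p^N_Y$ in which $W$ is closed. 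As $Y$ is regular, $V$ is regular of dimension $d+N$ and $h^!(\q_Y)=\q_V(N)[2N]$ by smooth purity (Proposition \ref{pcisdeg}(\ref{ipur})). Substituting $q^!=\iota^!h^!$ turns the group into $\dm(W)(\q_W,\iota^!(\q_V)(N+c-b)[2N+r+2c-2b])$, which Proposition \ref{pcisdeg}(\ref{ibormo}) (applied to the closed immersion $\iota$ into the regular $V$) identifies, once the indices collapse, with the rational higher Chow group $CH_{d+b-c}(X\times_S Y,-r)$; its vanishing in negative weight degree gives the orthogonality for $r>0$.

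For part 2 I would run the same base-change isomorphism of Proposition \ref{pcisdeg}(\ref{iexch}) on the Cartesian square with corners $Y',Y,S',S$ to get $i^!g_*\cong g'_*i'^!$, where $i'\colon Y'\to Y$ is the pullback of $i$. Because $g$ is smooth and $S$, $S'$ are regular, both $Y$ and $Y'$ are regular and $i'$ is again an immersion of codimension $d$; hence $i'^!(\q_Y)\cong\q_{Y'}(-d)[-2d]$ by absolute purity (Proposition \ref{pcisdeg}(\ref{ipura})), and commuting the twist past $g'_*$ (Proposition \ref{pcisdeg}(\ref{itate})) finishes it.

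Part II I would prove by dévissage along the stratification. Ordering the strata so that $X_l=\cup_{i\ge l}S_i^\al$ is closed in $S$ with $S_l^\al$ open in $X_l$, I argue by induction on the number of strata. In the two-stratum case $S=U\sqcup Z$ ($j\colon U\to S$ open, $i\colon Z\to S$ closed) I would apply the cohomological functor $\dms(-,N)$ to the gluing triangle $j_!j^*M\to M\to i_*i^*M$ (Proposition \ref{pcisdeg}(\ref{iglu}), using $j^!=j^*$ and $i_*=i_!$); the adjunctions $j_!\dashv j^!$ and $i_!\dashv i^!$ identify the outer terms of the resulting long exact sequence with $\dm(U)(j^*M,j^!N)$ and $\dm(Z)(i^*M,i^!N)$, exhibiting $\dms(M,N)$ as an extension of a subobject of the former by a quotient of the latter. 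For more strata I would feed the closed-stratum term $\dm(Z)(i^*M,i^!N)$ (with $Z=X_2$ and $i^*M$, $i^!N$ playing the roles of $M$, $N$) into the inductive hypothesis; since $j_i^*=\tilde j_i^*i^*$ and $j_i^!=\tilde j_i^!i^!$ for the immersions $\tilde j_i\colon S_i^\al\to Z$, the resulting subfactors are subfactors of $\dm(S_i^\al)(j_i^*M,j_i^!N)$, and transporting this filtration along the surjection onto the image yields the desired filtration of $\dms(M,N)$.

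The only place I expect to need genuine care is the index bookkeeping in I1: one must choose the factorization of $q$ so that Proposition \ref{pcisdeg}(\ref{ibormo}) applies verbatim (a closed immersion into a regular scheme), and then verify that the dimension and weight indices collapse exactly to $CH_{d+b-c}(X\times_S Y,-r)$. Parts 2 and II are then essentially formal consequences of base change and purity, and of the gluing formalism, respectively.
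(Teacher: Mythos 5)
Your proposal is correct and follows essentially the same route as the paper's proof: adjunction, the base-change isomorphism of Proposition \ref{pcisdeg}(\ref{iexch}), a closed embedding of $X\times_S Y$ into a smooth quasi-projective $Y$-scheme combined with smooth purity and Proposition \ref{pcisdeg}(\ref{ibormo}) for part I1, base change plus absolute purity for part 2, and the gluing long exact sequence with induction on strata for part II. The only (harmless) cosmetic difference is that you carry the twists $(b)$, $(c)$ through the computation, whereas the paper first reduces to $b=c=0$ by replacing $X$ and $Y$ with $(\p^1)^n(X)$ and $(\p^1)^m(Y)$.
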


\begin{proof}

I.1. By   Proposition \ref{pcisdeg}(\ref{itate}), we can assume that $c=0$. 

Next, we have $\dms(f_!(\q_X),g_*(\q_Y)(c)[r])\cong \dm(Y)(g^*f_!(\q_X),\q_Y(c)[r+2c])$ since $g^*$ is left adjoint to $g_*$. Applying Proposition \ref{pcisdeg}(\ref{iexch}), we obtain that the group in question is isomorphic to $$\dm(Y)(f'_!g'{}^*(\q_X),\q_Y(c)[r+2c])=\dm(Y)(f'_!(\q_{X\times_SY}),\q_Y(c)[r+2c])$$ (here $f'=f_Y$).

We denote $X\times_SY$ by $Z$.
Let $P$ be a smooth 
 $Y$-scheme containing  $Z$ as a closed subscheme;  we denote by $i:Z\to P$ and $p:P\to Y$ the corresponding morphisms. We can assume that $P$ is equidimensional and  everywhere of some dimension $d$ over $Y$. 

Then we have $$\dm(Y)(f'_!(\q_Z),\q_Y(c)[r])=\dms(p_!i_!(\q_Z),\q_Y(c)[r+2c]) \cong \dm(P)(i_!(\q_Z),p^!(\q_Y)(c)[r+2c])$$ (here we apply the adjunction of $p_!$ with $p^!$). By Proposition \ref{pcisdeg}(\ref{ipur}), the group in question is isomorphic to  $\dm(P)(i_!(\q_Z),p^*(\q_Y)(d+c)[r+2d+2c])\cong \dm(P)(i_!(\q_Z),\q_P(d+c)[r+2d+2c])$.  It remains to apply  Proposition \ref{pcisdeg}(\ref{ibormo}). 

2. $i^!g_*(\q_Y)\cong g'_*i'{}^!(\q_Y)$  by Proposition \ref{pcisdeg}(\ref{iexch})  (here $i'=i_Y$). So using Proposition \ref{pcisdeg}(\ref{ipura}) we obtain
the result.

II We prove the statement by induction on the number of strata.
By definition (see the Notation section) 
 $S_1^\al$ is open in $S$, and the remaining $S_l^\al$  yield a stratification of $S\setminus S_1^\al$.
We denote $S\setminus S_1^\al$ by $Z$, the (open) immersion $S_1^\al \to S$ by $j$ and the (closed) immersion $Z\to S$ by $i$.

Now we apply Proposition \ref{pcisdeg}(\ref{iglu}).
We obtain  a (long) exact sequence
$\dots\to \dms (i_*i^*(M),N)\to \dms(M,N)\to \dms (j_!j^!(M),N)
\to\dots$. The adjunctions of functors yield
$\dms(i_*i^*(M),N)\cong \dm(Z)(i^*(M), i^!(N))$ and $\dms(j_!j^!(M),N')\cong  \dm({S_1^\al})(j^*(M), j^!(N))$.

Now, by the inductive assumption the group $\dm(Z)(i^*(M), i^!(N))$ has a filtration whose factors are certain subquotients of $\dm(S_l^\al)(j_l^*(M), j^!(N))$ (for $l\neq 1$). This concludes the proof.

\end{proof}

\begin{rema}\label{rdimI1}
The proof of assertion I.1 certainly yields (for an arbitrary $b$) that  $e=\dim P -d +b-c$. Besides, if $Y$ is a Jacobson equicodimensional scheme (it suffices to assume that $S$ satisfies this condition; it is fulfilled for any scheme that is of finite type over the spectrum of a field or over $\spe \z$)
then we have $e=\dim Y+b-c$; cf. \S1.2 of \cite{bphtp}.
\end{rema}

\subsection{Weight structures: short reminder}\label{sws}

\begin{defi}\label{dwstr}

I A pair of subclasses $\cu_{w\le 0},\cu_{w\ge 0}\subset\obj \cu$ 
will be said to define a weight
structure $w$ for $\cu$ if 
they  satisfy the following conditions:

(i) $\cu_{w\ge 0},\cu_{w\le 0}$ are additive and Karoubi-closed in $\cu$
(i.e., contain all $\cu$-retracts of their objects).

(ii) {\bf Semi-invariance with respect to translations.}

$\cu_{w\le 0}\subset \cu_{w\le 0}[1]$, $\cu_{w\ge 0}[1]\subset
\cu_{w\ge 0}$.

(iii) {\bf Orthogonality.}

$\cu_{w\le 0}\perp \cu_{w\ge 0}[1]$.

(iv) {\bf Weight decompositions}.

 For any $M\in\obj \cu$ there
exists a distinguished triangle
\begin{equation}\label{wd}
B\to M\to A\stackrel{f}{\to} B[1]
\end{equation} 
such that $A\in \cu_{w\ge 0}[1],\  B\in \cu_{w\le 0}$.

II The full category $\hw\subset \cu$ whose object  class is
$\cu_{w=0}=\cu_{w\ge 0}\cap \cu_{w\le 0}$ 
 will be called the {\it heart} of 
$w$.


III $\cu_{w\ge i}$ (resp. $\cu_{w\le i}$, resp.
$\cu_{w= i}$) will denote $\cu_{w\ge
0}[i]$ (resp. $\cu_{w\le 0}[i]$, resp. $\cu_{w= 0}[i]$).

IV We denote $\cu_{w\ge i}\cap \cu_{w\le j}$
by $\cu_{[i,j]}$ (so it equals $\ns$ for $i>j$).

V We will  say that $(\cu,w)$ is {\it  bounded}  if
$\cup_{i\in \z} \cu_{w\le i}=\obj \cu=\cup_{i\in \z} \cu_{w\ge i}$.

VI Let $\cu$ and $\cu'$ 
be triangulated categories endowed with
weight structures $w$ and
 $w'$, respectively; let $F:\cu\to \cu'$ be an exact functor.

$F$ will be called {\it left weight-exact} 
(with respect to $w,w'$) if it maps
$\cu_{w\le 0}$ to $\cu'_{w'\le 0}$; it will be called {\it right weight-exact} if it
maps $\cu_{w\ge 0}$ to $\cu'_{w'\ge 0}$. $F$ is called {\it weight-exact}
if it is both left 
and right weight-exact.

VII Let $H$ be a 
full subcategory of a triangulated $\cu$.

We will say that $H$ is {\it negative} if
 $\obj H\perp (\cup_{i>0}\obj (H[i]))$.

 VIII We call a category $\frac A B$ a {\it factor} of an additive
category $A$
by its (full) additive subcategory $B$ if $\obj \bl \frac A B\br=\obj
A$ and $(\frac A B)(M,N)= A(M,N)/(\sum_{O\in \obj B} A(O,N) \circ
A(M,O))$.

IX For an additive $B$ we will consider the  category
of 'formal coproducts' of objects of $B$:
its objects are (formal) $\coprod_{j\in J}B_j:\ B_j\in \obj B$, and
$\mo(\coprod_{l\in L}B_{l},\coprod_{j\in J}B'_{j})
=\prod_{l\in L}(\bigoplus_{j\in J}B(B_{l},B'_{j}))$;
here $L,J$ are index sets. We will call the idempotent completion of this category the {\it big hull} of $B$.

\end{defi}

\begin{rema}\label{rstws}

1. If $B$ is a full subcategory of compact objects in an additive $C$, and $C$ is idempotent complete and closed with respect to arbitrary small coproducts, then there exists a natural full embedding of the big hull of $B$ into $C$. Note here:  if $C$ is triangulated and closed with respect to arbitrary small coproducts, then it is necessarily idempotent complete (see Proposition 1.6.8 of \cite{neebook}).

2. A  simple (and yet useful) example of a weight structure comes from the stupid
filtration on the homotopy categories of cohomological complexes
$K(B)\supset K^b(B)$ for an arbitrary additive category $B$. 
In this case
$K(B)_{w\le 0}$ (resp. $K(B)_{w\ge 0}$) will be the class of complexes that are
homotopy equivalent to complexes
 concentrated in degrees $\ge 0$ (resp. $\le 0$).  The heart of this weight structure (either for $K(B)$ or for $K^b(B)$)
is the  the Karoubi-closure  of $B$
 in the corresponding category.  We will use the notation $K(B)_{[i,j]}$ below following Definition \ref{dwstr}(IV).

3. A weight decomposition (of any $M\in \obj\cu$) is (almost) never canonical; 
still we will sometimes denote (any choice of) a pair $(B,A)$ coming from in (\ref{wd}) by $(w_{\le 0}M,w_{\ge 1}M)$. 

For an $l\in \z$ we denote by $w_{\le l}M$ (resp. $w_{\ge l}M$) a choice of  $w_{\le 0}(M[-l])[l]$ (resp. of $w_{\ge 1}(M[1-l])[l-1]$).

 We will call (any choices of) $w_{\le l} M$ and $w_{\ge l}M$  {\it weight truncations} of $M$.
A certain illustration of this notation (in a more general context of relative weight structure) can be found in Proposition \ref{pbrw}(\ref{irffunctwd}) below.

4. In the current paper we use the 'homological convention' for weight structures; 
it was previously used in \cite{hebpo}, \cite{wildic},  
and  \cite{btrans}, whereas in 
\cite{bws} and in \cite{bger} the 'cohomological convention' was used. In the latter convention 
the roles of $\cu_{w\le 0}$ and $\cu_{w\ge 0}$ are interchanged, i.e., one considers   $\cu^{w\le 0}=\cu_{w\ge 0}$ and $\cu^{w\ge 0}=\cu_{w\le 0}$. So,  a complex $X\in \obj K(B)$ whose only non-zero term is the fifth one (i.e., $X^5\neq 0$) 
 has weight $-5$ in the homological convention, and has weight $5$ in the cohomological convention. Thus the conventions differ by 'signs of weights'; 
 $K(B)_{[i,j]}$ is the class of retracts of complexes concentrated in degrees $[-j.-i]$. 
  
 
\end{rema}

Now we recall those properties of weight structures that
will be needed below (and that can be easily formulated).
We will not mention more complicated matters (weight spectral sequences and weight complexes) 
here; instead we will just formulate
the corresponding 'motivic' results below.

\begin{pr} \label{pbw}
Let $\cu$ be a triangulated category. 

\begin{enumerate}

\item\label{idual}
$(C_1,C_2)$ ($C_1,C_2\subset \obj \cu$) define a weight structure on $\cu$ if and only if
$(C_2^{op}, C_1^{op})$ define a weight structure on $\cu^{op}$.

\item\label{iext} Let $w$  be a weight structure on
 $\cu$. Then  $\cu_{w\le 0}$, $\cu_{w\ge 0}$, and $\cu_{w=0}$
are extension-stable.

Besides, for any $M\in \cu_{w\le 0}$ we have $w_{\ge 0}M\in \cu_{w=0}$ (for any choice of $w_{\ge 0}M$).

\item\label{iort} Let $w$  be a weight structure on
 $\cu$. Then $\cu_{w\ge 0}=(\cu_{w\le -1})^\perp$ and
 $\cu_{w\le 0}={}^\perp \cu_{w\ge 1}$ (see Notation).

\item\label{iuni} 
Suppose that $v,w$ are weight structures for $\cu$; let $\cu_{v\le
0}\subset \cu_{w\le 0}$ and $\cu_{v\ge 0}\subset \cu_{w\ge 0}$.
Then $v=w$ (i.e., the inclusions are equalities).

\item\label{iwsidc} Let $w$ be a bounded weight structure on $\cu$. Then $w$ extends to a bounded weight structure for the idempotent completion $\cu'$ of $\cu$ (i.e., there exists a weight structure $w'$ for $\cu'$ such that the embedding $\cu\to \cu'$ is weight-exact); its heart is the idempotent completion of $\hw$.

\item \label{igen}
Assume   $H\subset \obj \cu$ is negative and   $\cu$ is idempotent complete. Then
there exists a unique weight structure $w$ on the Karoubi-closure $T$
of $\lan H\ra$ in $\cu$ such that $H\subset T_{w=0}$. Its heart is
the {\it envelope} (see the Notation) of $H$ in 
$\cu$; it is the idempotent completion of $H$ if $H$ is additive.

\item \label{iwgen} For the weight structure mentioned in the
    previous assertion, $T_{w\le 0}$ is the
     envelope of  $\cup_{i\le 0}H[i]$;
$T_{w\ge 0}$ is the envelope of  $\cup_{i\ge 0}H[i]$.

\item \label{iadjco}
A composition of left (resp. right) weight-exact functors is left (resp. right) weight-exact.

\item \label{iadj}
Let $\cu$ and $\du$ be triangulated categories endowed with weight structures $w$ and $v$, respectively. Let
$F: \cu \leftrightarrows \du:G$ be adjoint functors. Then $F$ is left weight-exact if and only if $G$ is right weight-exact.

\item\label{iwfun} Let $\cu$ and $\du$ be triangulated categories endowed with weight structures $w$ and $v$, respectively; let $w$ be bounded. Then an exact functor $F:\cu\to \du$  is left (resp. right) weight-exact if and only if $F(\cu_{w=0})\subset \du_{v\le 0}$ (resp. $F(\cu_{w=0})\subset \du_{v\ge 0}$).

\item\label{iloc}

Let $w$  be a weight structure on
 $\cu$; let $\du\subset \cu$ be a 
triangulated subcategory of
$\cu$. Suppose
 that $w$ yields a weight structure $w_{\du}$ for  $\du$
(i.e., $\obj \du\cap \cu_{w\le
 0}$ and $\obj \du\cap \cu_{w\ge
 0}$ give a weight structure on $\du$). 

 Then $w$ also induces a weight structure on
 $\cu/\du$ (the localization, i.e., the Verdier quotient of $\cu$
by $\du$) in the following sense: the Karoubi-closures of $\cu_{w\le
 0}$ and $\cu_{w\ge
 0}$ (considered as classes of objects of $\cu/\du$) give a weight structure $w'$ for $\cu/\du$
(note that $\obj \cu=\obj \cu/\du$). Besides, there exists a full embedding $\frac {\hw}{\hw_{\du}}\to \hw'$; $\hw'$ is the Karoubi-closure  of $\frac{\hw}{\hw_{\du}}$ in
$\cu/\du$.

\item \label{iwegen}
Suppose that $\du\subset \cu$ is a full category of compact objects endowed with bounded a weight structure $w'$. Suppose that $\du$ weakly generates $\cu$; let $\cu$  admit arbitrary (small) coproducts. Then $w'$ can be extended to a weight structure $w$ for $\cu$. Its heart is the big hull of $\hw'$ (as defined in  Definition \ref{dwstr}(IX)).

\item\label{igluws}
Let $\du\stackrel{i_*}{\to}\cu\stackrel{j^*}{\to}\eu$ be a part of a gluing datum. This means
that $\du,\cu,\eu$ are triangulated categories, $i^*$ and $j^*$ are exact functors;  $j^*$ is a localization functor, $i_*$ is an embedding of the categorical kernel of $j^*$ into $\cu$; $i_*$ possesses both a left adjoint $i^*$ and a right adjoint $i^!$ (see Chapter 9 of \cite{neebook}; note that this piece of a datum extends to a datum similar to that described in  Proposition \ref{pcisdeg}(\ref{iglu})).

Then for any pair of weight structures on $\du$ and $\eu$ (we will denote them by $w_\du$ and $w_\eu$, respectively)
there exists a 
 weight structure $w$ on $\cu$ such that both $i_*$ and $j^*$ are weight-exact (with respect to the corresponding weight structures). Besides, $i^!$ and $j_*$ are right weight-exact (with respect to the corresponding weight structures); $i^*$ and $j_!$ are left weight-exact. Moreover, 
$\cu_{w\ge 0}=C_1=\{M\in \obj
\cu:\ i^!(M)\in \du_{w_\du\ge 0} ,\ j^*(M)\in \eu_{w_\eu\ge 0} \}$ and
$\cu_{w\le 0}=C_2=\{M\in \obj \cu:\ i^*(M)\in \du_{w_\du\le 0} ,\ j^*(M)\in
\eu_{w_\eu\le 0} \}$. Lastly, $C_1$ (resp. $C_2$) is the  envelope  of $j_!(\eu_{w\le
0})\cup  i_*(\du_{w\le 0})$  (resp. of $ j_*(\eu_{w\ge 0})\cup i_*(\du_{w\ge 0})$).

\item\label{igluwsc} In the setting of the previous assertion, if $w_{\du}$ and $w_{\eu}$ are  bounded, then: $w$ is bounded also; besides, $\cu_{w\le 0}$ is the envelope of $\{i_*(\du_{w_{\du}=l}),\ j_!(\eu_{w_{\eu}=l}),\ l\le 0\}$;
$\cu_{w\ge 0}$ is the envelope of $\{i_*(\du_{w_{\du}=l}),\ j_*(\eu_{w_{\eu}=l}),\ l\ge 0\}$.

\item\label{igluwsn} In the setting of  assertion \ref{igluws}, the weight structure $w$ described is the only weight structure on $\cu$ such that both $i_*$ and $j^*$ are weight-exact.

\end{enumerate}
\end{pr}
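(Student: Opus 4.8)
The plan is to \emph{define} the two classes by $\cu^{w\le 0}:=C_1$ and $\cu^{w\ge 0}:=C_2$ and to check the four axioms of Definition \ref{dwstr}(I) directly; once this is done, the weight--exactness assertions and the two ``envelope'' descriptions follow formally. Throughout I use the standard identities attached to gluing data (see \S1.4.3 of \cite{bbd} and Chapter 9 of \cite{neebook}): the adjunctions $i^*\dashv i_*\dashv i^!$ and $j_!\dashv j^*\dashv j_*$, the isomorphisms $i^*i_*\cong i^!i_*\cong \id_{\du}$ and $j^*j_!\cong j^*j_*\cong \id_{\eu}$, the vanishings $j^*i_*=0$, $i^*j_!=0$, $i^!j_*=0$, and the two functorial triangles $j_!j^*M\to M\to i_*i^*M$ and $i_*i^!M\to M\to j_*j^*M$. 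Axioms (i) and (ii) are immediate: $C_1,C_2$ are cut out by requiring the images under the exact functors $i^!,j^*$ (resp. $i^*,j^*$) to lie in classes that are additive, Karoubi--closed and semi--invariant, so the same holds for $C_1,C_2$; the same reasoning, now using that $\du^{w_{\du}\le 0},\eu^{w_{\eu}\le 0}$ etc. are extension--stable by Proposition \ref{pbw}(\ref{iext}), shows \emph{directly} (i.e. before knowing that $w$ is a weight structure) that $C_1$ and $C_2$ are extension--stable and Karoubi--closed.

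Orthogonality (iii) I would verify as follows. Let $N\in C_2$ and $M\in C_1[1]$, so $i^*N\in\du^{w_{\du}\ge 0}$, $j^*N\in\eu^{w_{\eu}\ge 0}$, while $i^!M\in\du^{w_{\du}\le -1}$ and $j^*M\in\eu^{w_{\eu}\le -1}$. Applying $\cu(N,-)$ to the triangle $i_*i^!M\to M\to j_*j^*M$, the two outer groups become $\cu(N,i_*i^!M)\cong\du(i^*N,i^!M)$ and $\cu(N,j_*j^*M)\cong\eu(j^*N,j^*M)$ by adjunction, and both vanish by the orthogonality axioms for $w_\du$ and $w_\eu$. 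Hence $\cu(N,M)=\ns$, i.e. $C_2\perp C_1[1]$.

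The main obstacle is the existence of weight decompositions (iv): for each $M$ one must build a triangle $X\to M\to Y\to X[1]$ with $X\in C_1$ and $Y\in\cu^{w\ge 1}=C_2[-1]$. Since $C_1$ and $\cu^{w\ge 1}$ are extension--stable and Karoubi--closed (just shown), and since the gluing identities exhibit the ``generators'' $j_*(\eu^{w_{\eu}\le 0}),\,i_*(\du^{w_{\du}\le 0})$ inside $C_1$ and $j_!(\eu^{w_{\eu}\ge 1}),\,i_*(\du^{w_{\du}\ge 1})$ inside $\cu^{w\ge 1}$, it suffices to produce such a triangle with $X$ in the envelope of the first family and $Y$ in the envelope of the second. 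I would do this by gluing decompositions stratum by stratum in the style of \S1.4 of \cite{bbd}: first pick a weight decomposition $A\to j^*M\to B$ of $j^*M$ in $\eu$ and transport it to $\cu$ through $j_*$ (for the $\le 0$ part) and $j_!$ (for the $\ge 1$ part) using the units and counits, which reduces, after one octahedron, to decomposing a ``defect'' lying in the essential image of $i_*$; then decompose the $i^*$-- (resp. $i^!$--) image of that defect in $\du$ and push it forward by $i_*$ (note that $i_*$ applied to a $\du$--weight decomposition of $i^*M$ is \emph{already} a valid decomposition of $i_*i^*M$, and likewise for $i^!$); finally splice the two stages with a second octahedron. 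The delicate point — and the technical heart of the argument — is to arrange the four membership conditions on $i^!X,j^*X,i^*Y,j^*Y$ simultaneously; here the vanishings $i^!j_*=0$, $i^*j_!=0$, $j^*i_*=0$ are precisely what decouple the open and the closed corrections, so that neither disturbs the other.

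Granting (iv), $(C_1,C_2)$ is a weight structure $w$ with the displayed classes. Weight--exactness of $i_*$ is then immediate from $i^!i_*\cong i^*i_*\cong\id$ together with $j^*i_*=0$, and weight--exactness of $j^*$ is the defining condition on $C_1,C_2$; the one--sided weight--exactness of $i^*,j_!$ (right) and of $i^!,j_*$ (left) follows by applying Proposition \ref{pbw}(\ref{iadj}), and its dual form (available by Proposition \ref{pbw}(\ref{idual})), to the two adjoint triples. For the envelope descriptions, the inclusions of the generating families into $C_1$ (resp. $C_2$) recorded via the weight--exactness of $i_*,j_*$ (resp. $i_*,j_!$) give ``$\supseteq$'', while applying the triangle $i_*i^!M\to M\to j_*j^*M$ (resp. $j_!j^*M\to M\to i_*i^*M$) to $M\in C_1$ (resp. $M\in C_2$) exhibits $M$ as an extension of two generators, giving ``$\subseteq$'' by extension--stability. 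Finally, the uniqueness statement \ref{igluwsn} drops out: if $w''$ is any weight structure for which $i_*$ and $j^*$ are weight--exact, the same adjunction argument shows $i^!,j^*$ are suitably exact for $w''$, whence $\cu^{w''\le 0}\subseteq C_1$ and $\cu^{w''\ge 0}\subseteq C_2$, so $w''=w$ by Proposition \ref{pbw}(\ref{iuni}).
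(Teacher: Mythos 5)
Your proposal engages with only two of the fifteen parts of the proposition, namely \ref{igluws} and \ref{igluwsn}. This is nearly the opposite of the paper's own proof, which handles almost everything by citation to \cite{bws} (including \ref{igluws}, which is Theorem 8.2.3 there) and then supplies short arguments for exactly the four parts not contained in \cite{bws}: \ref{iadj}, \ref{iwfun}, \ref{igluwsc}, and \ref{igluwsn}. Of these four you prove only \ref{igluwsn}, and your argument for it (weight-exactness of $i_*$ and $j^*$ forces $\cu^{w''\le 0}\subset C_1$ and $\cu^{w''\ge 0}\subset C_2$ via adjunction, then apply part \ref{iuni}) coincides with the paper's. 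Parts \ref{iwfun} and \ref{igluwsc} are never addressed; and part \ref{iadj} is invoked several times in your text (for the one-sided weight-exactness of $i^*,i^!,j_!,j_*$ and again in the uniqueness step) without being proved, although it is one of the parts that genuinely requires an argument --- you should at least record that it follows in one line from part \ref{iort} and the definition of adjoint functors, as the paper does.

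On part \ref{igluws} itself your route is genuinely different from the paper's (which is pure citation), and most of what you write is correct and complete: the verification of axioms (i)--(iii) --- in particular the orthogonality argument applying $\cu(N,-)$ to the triangle $i_*i^!M\to M\to j_*j^*M$ and using the two adjunctions --- as well as the weight-exactness of $i_*$ and $j^*$, the one-sided exactness by adjunction, the envelope descriptions via the two gluing triangles, and the uniqueness. The weak point is precisely the one you flag: axiom (iv). Your sketch does name the right ingredients (using $j_*$ for the $\le 0$ piece and $j_!$ for the $\ge 1$ piece of a decomposition of $j^*M$; the fact that the resulting ``defect'' lies in $i_*(\du)$ because $i_*(\du)$ is the categorical kernel of $j^*$; and the observation that $i_*$ of a $\du$-weight decomposition is already a valid decomposition in $\cu$). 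But the execution is where all the difficulty of Theorem 8.2.3 of \cite{bws} sits: since $i^!j_!\neq 0$ and $i^*j_*\neq 0$ in general, neither $j_!$ nor $j_*$ of a full weight decomposition of $j^*M$ is a weight decomposition in $\cu$ (one of the two transported pieces always has an uncontrolled $i^!$- or $i^*$-image), so one cannot literally ``correct'' once on the open stratum and once on the closed one. One needs, for instance, the triangle $i_*i^!j_!A\to j_!A\to j_*A$ to trade $j_!A$ for $j_*A$ at the cost of an $i_*$-defect, the vanishing $\cu(j_!B[-1],j_*A)\cong\eu(B,A[1])=\ns$ to obtain the required factorizations, and a lemma asserting that an extension of two objects admitting weight decompositions admits one (which itself rests on the already-established orthogonality together with Proposition 1.1.11 of \cite{bbd}), applied in a nested fashion. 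As written, the existence of weight decompositions --- the only nontrivial axiom --- remains a plausible and well-aimed sketch rather than a proof.
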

\begin{proof}

Most of the assertions 
were proved in \cite{bws} (pay attention to Remark \ref{rstws}(4)!); 
see Remark 1.1.2(1), Proposition 1.3.3(3,6,1,2), Lemma 1.3.8, Proposition 5.2.2, Theorem 4.3.2(II) (together with  its proof), Remark 4.4.6, Proposition 8.1.1, Theorem 4.5.2, and Theorem 8.2.3 (together with Remark 8.2.4(1))
  of ibid., respectively.

We only have to prove assertions \ref{iadj}, \ref{iwfun}, \ref{igluwsc}, and 
\ref{igluwsn}.

(\ref{iadj}) follows immediately from assertion \ref{iort} (using the definition of adjoint functors).

(\ref{iwfun}) is immediate from assertion \ref{iwgen} by assertion \ref{iext}.

If $w_{\cu}$ and $w_{\du}$ are bounded, then $w$ also is by definition. The remaining part of assertion \ref{igluwsc} is immediate from Remark 8.2.4(1) of \cite{bws} and assertion \ref{iwgen}.

(\ref{igluwsn}): Suppose that the assumptions of assertion \ref{igluws} are fulfilled, and consider some weight structure $v$ for $\cu$ such that $i_*$ and $j^*$ are weight-exact.

Since $i_*$ and $j^*$ are weight-exact, by assertion \ref{iadj} we obtain:  $i^!$ and $j_*$ are right weight-exact; $i^*$ and $j_!$ are left weight-exact (with respect to the corresponding weight structures). 
Hence the class $\cu_{v\le 0}$ (resp. $\cu_{v\ge 0}$) is contained in $C_1$ (resp. in $C_2$) in the notation of assertion \ref{igluws}. Since the couple $(C_1,C_2)$ does yield a weight structure $w$ for $\cu$ (by loc. cit.), by assertion \ref{iuni} we obtain that $v=w$.

\end{proof}

\begin{rema}\label{rlift}

Part \ref{iloc} of the proposition can be re-formulated as follows. If $i_*:\du\to \cu$ is an embedding of triangulated categories that is weight-exact (with respect to certain weight structures for $\du$ and $\cu$), an exact functor $j^*:\cu\to \eu$ is equivalent to the localization of $\cu$ by $i_*(\du)$, then there exists a unique weight structure $w'$ for $\eu$ such that $j^*$ is weight-exact; $\hw_{\eu}$ is the Karoubi-closure of $\frac {\hw} {i_*(\hw_{\du})}$ (with respect to the natural functor $\frac {\hw}{i_*(\hw_{\du})}\to \eu$).

\end{rema}

\section{The Chow weight structure: two constructions and basic properties}\label{swchow} 

In \S\ref{stmain} we define the category $\chows$ of Chow motives over $S$ (similar definitions can be found in \cite{haco}, \cite{hebpo}, and \cite{sg}). By our definition, $\chows\subset \dmcs$; since $\chows$ is also negative in it and generates it (if $S$ is reasonable; here we use  the properties of $\dmcs$ proved in \S\ref{sbrmot}) we immediately obtain (by Proposition \ref{pbw}(\ref{iwgen})) that there exists a weight structure on $\dmcs$ whose heart is $\chows$.

In \S\ref{sfwchow} we study the 'functoriality' of $\wchow$ (with respect to functors of the type $f^*,f_*,f^!$, and $f_!$, for $f$ being a 
smoothly embeddable  morphism of schemes). Our functoriality statements are parallel to the 'stabilities' 5.1.14 of \cite{bbd}; we will explain this similarity in the next section.
We also prove that $\wchow$ can be described 'pointwisely' (similarly to \S5.1.8 of \cite{bbd}), and prove that it is 'continuous' (in a certain sense).

In \S\ref{schowunr}  we describe an alternative method for the construction of $\wchow$ for $\dmcs$ (for an excellent separated finite-dimensional scheme $S$ that is not necessarily reasonable). This method uses stratifications and 'gluing' of weight structures; this makes this part of the paper
very much parallel to the study of weights of mixed complexes of sheaves  in \S5 of \cite{bbd}. Actually, this method is the first one developed by the author
(it was first proposed in Remark 8.2.4(3) of \cite{bws}, that was in its turn inspired by \cite{bbd}). We prove that this alternative method yields the same result as the method of \S\ref{stmain} if $S$ is reasonable. This yields some  
new descriptions of $\wchow$ (in this case); see Remark \ref{rflex}(2).

\subsection{Relative Chow motives; 
the 'basic' construction of $w_{\chow}$}\label{stmain}

We define $\chows$ as the Karoubi-closure of $\{f_!(\q_X)(r)[2r]\}=\{f_*(\q_X)(r)[2r]\}$ in $\dmcs$; here $f:X\to S$ runs through all finite type projective 
 morphisms such that $X$ is regular, $r\in \z$.

Till \S\ref{schowunr} we will assume that all schemes that we consider are reasonable (see Definition \ref{dreas}).


\begin{theo}\label{twchow}

I There exists a (unique) weight structure $\wchow$ for $\dmcs$ 
whose heart is $\chows$.

II 
$\wchow(S)$ can be extended to a weight structure $\wchowb$ for the whole $\dms$.
$\hwchowb$ is the big hull of $\chows$ (as defined in   Definition \ref{dwstr}(IX); see Remark \ref{rstws}(1)).

\end{theo}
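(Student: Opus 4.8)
The plan is to derive both assertions from the abstract machinery already assembled in Proposition~\ref{pbw}, so that essentially all the ``real'' work reduces to two inputs about $\chows$: that it is \emph{negative} in $\dmcs$, and that it \emph{weakly generates} $\dms$ (equivalently, generates $\dmcs$ as a triangulated category up to Karoubi-closure). For part~I, I would argue as follows. First I would verify negativity: by definition the generators of $\chows$ are the $f_!(\q_X)(r)[2r]=f_*(\q_X)(r)[2r]$ with $f:X\to S$ projective and $X$ regular, so I must check that for two such objects $N_1=f_!(\q_X)(b)[2b]$ and $N_2=g_*(\q_Y)(c)[2c]$ and any $i>0$ one has $N_1\perp N_2[i]$. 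This is \emph{exactly} the ``in particular'' clause of Lemma~\ref{l4onepo}(I1): taking $r=i>0$ there gives $\dms(f_!(\q_X)(b)[2b],\,g_*(\q_Y)(c)[i+2c])\cong CH_{d+b-c}(X\times_S Y,-i)=\{0\}$, since higher Chow groups with negative index vanish. Negativity for arbitrary objects of $\chows$ (as opposed to the distinguished generators) then follows because $\perp$ is preserved under retracts and finite direct sums, and these operations generate the Karoubi-closure. Having established negativity, I would invoke Proposition~\ref{pbw}(\ref{igen}): since $\dmcs$ is idempotent complete (Proposition~\ref{pcisdeg}(\ref{iidcompl})) and $\chows$ is negative, there is a unique weight structure $\wchow$ on the Karoubi-closure of $\langle\chows\rangle$ whose heart is the envelope of $\chows$, which is just $\chows$ itself since it is already additive and Karoubi-closed. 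The last point needing attention is that this Karoubi-closure is \emph{all} of $\dmcs$, not a proper subcategory: here I would use Proposition~\ref{pcisdeg}(\ref{igenc}), which says precisely that for reasonable $S$ the generators $g_*(\q_X)(r)$ (with $g$ projective, $X$ regular) generate $\dmcs$ as a triangulated category. This identifies $T=\dmcs$ and completes part~I, with uniqueness coming for free from the same proposition (or from Proposition~\ref{pbw}(\ref{iuni})).

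For part~II I would pass from the compact category to the ``big'' category $\dms$ by the coproduct-extension mechanism of Proposition~\ref{pbw}(\ref{iwegen}). The hypotheses to check are: $\chows=\hwchow\subset\dmcs$ is a full subcategory of \emph{compact} objects carrying a \emph{bounded} weight structure; $\dmcs$ weakly generates $\dms$; and $\dms$ admits arbitrary small coproducts. The compactness and the weak generation are Proposition~\ref{pcisdeg}(\ref{imotgen}), the existence of coproducts is part~(\ref{imotcat}), and boundedness of $\wchow$ on $\dmcs$ is automatic from part~I because the generators and their shifts exhaust $\dmcs$ (every object is built by finitely many cones and retracts from objects in a bounded range of weights). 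Applying Proposition~\ref{pbw}(\ref{iwegen}) then yields a weight structure $\wchowb$ on $\dms$ extending $\wchow$, whose heart is the \emph{big hull} of $\hwchow=\chows$ in the sense of Definition~\ref{dwstr}(IX); by Remark~\ref{rstws}(1) this big hull embeds into $\dms$ as a genuine full subcategory, which justifies the identification $\hwchowb=$ (big hull of $\chows$) as stated.

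The step I expect to be the genuine obstacle is negativity, i.e.\ reducing it cleanly to Lemma~\ref{l4onepo}(I1). Everything else is a bookkeeping application of the weight-structure formalism once negativity and generation are in hand, but negativity is where the actual input from intersection theory and the six-functor formalism enters: one must really know that the relevant $\homm$-groups are higher Chow groups in negative weight and hence vanish, and one must be a little careful that the Karoubi-closure and the Tate twists do not spoil the orthogonality. A secondary point worth stating explicitly is why the envelope of the additive, already-Karoubi-closed $\chows$ is $\chows$ itself (rather than something larger): this is immediate from the last sentence of Proposition~\ref{pbw}(\ref{igen}), but it is the reason the heart comes out \emph{equal} to $\chows$ and not merely containing it. Finally, I would note that the uniqueness in part~I and the well-definedness of $\wchowb$ in part~II both ride on the idempotent-completeness of $\dmcs$ and $\dms$, which is exactly Proposition~\ref{pcisdeg}(\ref{iidcompl}); without it Proposition~\ref{pbw}(\ref{igen}) and~(\ref{iwegen}) would not apply.
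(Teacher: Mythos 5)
Your proposal is correct and follows essentially the same route as the paper's proof: negativity of $\chows$ via Lemma \ref{l4onepo}(I1), generation via Proposition \ref{pcisdeg}(\ref{igenc}), then Proposition \ref{pbw}(\ref{igen}) for part I, and Proposition \ref{pbw}(\ref{iwegen}) (with compactness, weak generation, and coproducts supplied by Proposition \ref{pcisdeg}) for part II. The additional details you make explicit --- boundedness of $\wchow$, stability of orthogonality under retracts and sums, and idempotent completeness --- are correct refinements of the same argument, which the paper leaves implicit.
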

\begin{proof}

I By  Proposition \ref{pbw}(\ref{igen}) it suffices to verify that $\chows$ is negative and generates $\dmcs$. The negativity of $\chows$ is immediate from  Lemma \ref{l4onepo}(I). $\chows$ generates $\dmcs$  by   Proposition \ref{pcisdeg}(\ref{igenc}).

II Since $\chows$ generates $\dmcs$, and $\dmcs$ weakly generates $\dms$ (by Proposition \ref{pcisdeg}(\ref{imotgen})), $\chows$ weakly generates $\dms$. 

 Hence the assertion follows immediately from assertion I and Proposition \ref{pbw}(\ref{iwegen}). 

\end{proof}

\begin{rema}\label{rtwchow}
1. In particular, the theorem holds for $S$ being the spectrum of a (not necessarily perfect) field $k$.

 For a perfect $k$ the existence of $\wchow$  
 was already proved in \S6 
of \cite{bws}. 
Note here that $\dmc(\spe k)\cong \dmgm\q(k)$ for a perfect $k$ (in the notation of  loc. cit.; $\dmgm\q(k)$ denotes the category of motives with rational coefficients), whereas $p_!\q_P(r)[2r]$ yields a Chow motif over $k$ (for any $r\in \z$ and $p:P\to \spe k$ being a smooth projective morphism; recall here that the 'ordinary' category of Chow motives over $k$ can be fully embedded into $\dmgm$). 

2. Moreover (as was kindly pointed out by the referee) if $k^p$ is 
the perfect closure of an arbitrary field $k$, then $\dmc(\spe k^p)\cong \dmc(\spe k)$; see Proposition \ref{pcisdeg}(\ref{itr}). 

3. Besides, in \cite{mymot} a related differential graded 'description' of motives over a characteristic zero $k$ was given. It was generalized in \cite{lesm} to a description of the category of {\it smooth motives} over  $S$, when   $S$ is a smooth variety  over (a characteristic $0$ field) $k$; here the category of smooth motives is the triangulated category generated by motives of smooth projective $S$-schemes.

Note also: the restriction of $\wchow$ to smooth motives 
induces a weight structure for this category (that is coherent with the 'description' mentioned).

4. Our results would certainly look nicer if we  had a description of the composition of morphisms in $\chows$ (note here that  the morphism groups between 'generating objects' of $\chows$  can be immediately computed using  Lemma \ref{l4onepo}(I)).  The author conjectures that this composition is compatible with the ones described \S2 of \cite{haco} and in \S5.2 of \cite{sg}. 
 In order to prove this Levine's method could be quite useful, as well as the description of $\dms$ in terms of $qfh$-sheaves (see Theorem 16.1.2 of \cite{degcis}). Moreover, the methods of \cite{lesm} could possibly allow  giving a 'differential graded' description of the whole $\dmcs$ (extending the main result of ibid.). 

The author plans to study these matters further.

5. In Theorem 3.2 of \cite{hebpo} an orthogonality property (similar to that in Lemma \ref{l4onepo}(I.1)) was established for  not necessarily 
smoothly embeddable $f$ and $g$.
This yielded that $\{f_!(\q_X)(r)[2r]\}\in \dmcs_{w=0}$ for any proper (not necessarily projective!) $f$ such that $X$ is regular, $r\in \z$,
and allowed  generalizing Theorem \ref{tfunctwchow}(II) (below) to not necessarily 
smoothly embeddable morphisms.


Note also:  in our proof of 
Lemma \ref{l4onepo}(I.1) we actually only 
needed an embedding of $X\times_S Y$ into a regular finite type $S$-scheme; this also implies a certain generalization of  Theorem \ref{tfunctwchow}(II) (in particular, we can extend it to arbitrary finite type morphisms of  schemes that are of finite type over a fixed characteristic $0$ field). 

6. If $S$ is not reasonable, we still obtain that $\chows$ is negative. Hence, there exists a weight structure on $\lan \chows \ra$ whose heart is $\chows$ (since $\chows$ is idempotent complete). The problem is that we do not know whether $\lan \chows\ra$ is the whole $\dmcs$ (though this is true for motives over generic points of $S$, since those are reasonable; cf. also part 1 of this remark). 

One can also prove the existence of a certain analogue of the Chow weight structure over a not necessarily reasonable scheme $S$; see \S\ref{schowunr} below. The main disadvantage of this method is that it does not yield an 'explicit' description of  $\hwchow$ (though  $\hwchow\supset \chows$; cf. Remark \ref{rflex}(3)).

 \end{rema}

\subsection{Functoriality of $w_{\chow}$}\label{sfwchow}

Now we study (left and right) weight-exactness of the motivic image functors. These statements are very similar to the properties of pure complexes of constructible sheaves. This is not surprising; cf. \S\ref{sperv} below. 
In this subsection $S,X,Y$ (and hence also $Z$, $U$, and all $S_l^\al$) will be reasonable.

\begin{theo}\label{tfunctwchow}

I The functor $-(1)[2](=\otimes \q(1)[2])$ and its inverse $-(-1)[-2]:\dmcs\to\dmcs$ are weight-exact with respect to $\wchow$ for any 
$S$.

II Let $f:X\to Y$ be a 
 smoothly embeddable morphism of 
 schemes.

1. $f^!$ and $f_*$ are right weight-exact; $f^*$ and $f_!$ are left weight-exact.

2. Suppose moreover that $f$ is smooth. Then $f^*$
and $f^!$ are also weight-exact.

3. Moreover, $f^*$ is weight-exact for  $f$ being either (i) a finite universal homeomorphism or (ii) a
(filtering) projective limit of smooth morphisms such that the corresponding connecting morphisms are smooth 
 affine. In case (i) $f^!$ is weight-exact also.

III  Let $i:Z\to X$ be a closed immersion; 
let $j:U\to X$ be the complementary open immersion.

1. $\chow(U)$ is the idempotent completion of the factor (in the sense of  Definition \ref{dwstr}(VIII)) of $\chow(X)$ by $i_*(\chow(Z))$. 

2. For $M\in \obj \dmcx$ we have: $M\in \dmcx_{\wchow\ge 0}$ (resp. $M\in \dmcx_{\wchow\le 0}$) if and only if $j^!(M)\in \dmc(U)_{\wchow\ge 0}$ and $i^!(M)\in \dmc(Z)_{\wchow\ge 0}$ (resp.  $j^*(M)\in \dmc(U)_{\wchow\le 0}$ and $i^*(M)\in \dmc(Z)_{\wchow\le 0}$).

IV Let $S=\cup S_l^\al$ be a stratification, $j_l:S_l^\al\to S$ are the corresponding immersions. Then for $M\in \obj \dmcs$ we have: $M\in \dmcs_{\wchow\ge 0}$ (resp. $M\in \dmcs_{\wchow\le 0}$) if and only if $j_l^!(M)\in \dmc(S_l^\al)_{\wchow\ge 0}$ (resp. $j_l^*(M)\in \dmc(S_l^\al)_{\wchow\le 0}$)  for all $l$.

V 1. For any $S$ we have $\q_S\in \dmcs_{\wchow\le 0}$.

2. If $S_{red}$ is regular, then $\q_S\in \dmcs_{\wchow=0}$.

\end{theo}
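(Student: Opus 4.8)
The plan is to establish part V.2 first and then feed it, together with the stratification criterion of part IV, into the proof of V.1.

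For V.2, suppose $S_{red}$ is regular. By Proposition \ref{pcisdeg}(\ref{ired}) the functor $v^*$ attached to the canonical immersion $v\colon S_{red}\to S$ is an equivalence, and $v^*(\q_S)=\q_{S_{red}}$ by Proposition \ref{pcisdeg}(\ref{iupstar}). Since any projective $g\colon Y\to S$ with $Y$ regular factors through $S_{red}$ (as $Y$ is reduced) and $Y\times_S S_{red}\cong Y$, the base change in part \ref{iexch} of loc.cit. shows that $v^*$ carries the generating objects of $\chows$ to those of $\chow(S_{red})$; being an equivalence, it identifies the two hearts and is hence weight-exact. It therefore suffices to treat the case $S=S_{red}$ regular. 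But then $\id_S\colon S\to S$ is a projective morphism from a regular scheme, so $\q_S=(\id_S)_!(\q_S)$ is one of the generating objects of $\chows$; thus $\q_S\in\chows=\dmcs^{\wchow=0}$.

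For V.1 I would reduce to the regular strata of $S$. Since $S$ is excellent, its regular locus is open; applying this repeatedly (Noetherian induction) yields a stratification $S=\cup S_l^\al$ in the sense of the Notation in which every stratum $S_l^\al$, taken with its reduced structure, is regular. Each $S_l^\al$ is again reasonable (being of finite type over the same base), so V.2 applies to it. By Proposition \ref{pcisdeg}(\ref{iupstar}) we have $j_l^*(\q_S)=\q_{S_l^\al}$, and since $(S_l^\al)_{red}$ is regular, V.2 gives $\q_{S_l^\al}\in\dmc(S_l^\al)^{\wchow=0}\subset\dmc(S_l^\al)^{\wchow\ge 0}$ for every $l$. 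It then remains to invoke the stratification criterion of part IV, whose $\ge 0$ half is formulated precisely in terms of the restrictions $j_l^*$: since $j_l^*(\q_S)\in\dmc(S_l^\al)^{\wchow\ge 0}$ for all $l$, we conclude $\q_S\in\dmcs^{\wchow\ge 0}$.

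The argument is essentially an assembly of earlier results, so there is no single hard step; the only genuine content is the existence of a stratification into regular strata (which is what excellence buys us) together with the elementary fact that $j_l^*$ preserves the unit object. It is worth recording the asymmetry that keeps $\q_S$ out of the heart for singular $S$: the $\le 0$ half of IV is controlled by $j_l^!(\q_S)$ rather than by $j_l^*(\q_S)$, and over a non-regular stratum the functor $j_l^!$ introduces shifts by the codimension that drag weights below $0$. Consequently only the non-negativity of the weights survives in general, which is exactly the content of V.1.
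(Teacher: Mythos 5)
Your proposal covers only part V of the theorem, and that is the essential gap: parts I--IV are never proved, yet part IV is the key tool you invoke to conclude V.1. Since IV is itself one of the assertions being established, what you have written is a proof of the last assertion conditional on all the others, not a proof of the stated theorem. Moreover, the omitted parts carry almost all of the actual work: I and II are obtained by reducing weight-exactness of the motivic image functors to preservation of Chow motives (Proposition \ref{pbw}(\ref{iwfun}), which applies because $\wchow$ is bounded), combined with the adjunction criterion of Proposition \ref{pbw}(\ref{iadj}) and the factorization of a quasi-projective morphism as a closed immersion followed by a smooth quasi-projective morphism; III rests on the localization and gluing statements for weight structures (Proposition \ref{pbw}(\ref{iloc}), (\ref{igluws}), (\ref{igluwsn})) together with Proposition \ref{pcisdeg}(\ref{igluc}); and IV is then an induction on the number of strata using III2. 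None of this machinery appears in your text, so the proposal cannot stand alone as a proof of the theorem.

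The part you do treat is correct and follows the paper's own route almost verbatim. For V.2 the paper also exploits the equivalence $v^*$ of Proposition \ref{pcisdeg}(\ref{ired}), only phrased dually: it observes that $v_*(\q_{\sred})$ lies in $\dmcs^{\wchow=0}$ by the very definition of $\chows$ (since $v$ is a projective morphism with regular source) and then identifies $v_*(\q_{\sred})\cong\q_S$ by adjunction, whereas you transport the weight structure along $v^*$ (via the base-change argument showing that $v^*$ matches up the generators) and use that $\q_S=(\id_S)_!(\q_S)$ is itself a generator over a regular base; both variants are sound. For V.1 your argument --- a stratification with regular reduced strata, the equality $j_l^*(\q_S)=\q_{S_l^\al}$, then V.2 on each stratum followed by the positivity half of IV --- is exactly the paper's. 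So the verdict is: correct and faithful on V, but missing I--IV, which is where the substance of the theorem lies.
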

\begin{proof}

I Since $\wchow$ is bounded for any base scheme, in order to prove that a motivic image functor is 
weight-exact it suffices to prove that it preserves Chow motives; see  Proposition \ref{pbw}(\ref{iwfun}). The assertion follows immediately.

II Let $f$ be a smooth morphism. Then we obtain: $f^*(\dmc(Y)_{\wchow=0})\subset \dmc(X)_{\wchow=0}$ by  Proposition \ref{pcisdeg}(\ref{iexch}). Hence $f^*$ is weight-exact  (by the same argument as above). Passing to the limit (using 
Remark \ref{ridmot}(2)) we prove assertion II.3(ii).
 We also obtain that $f^!$ is weight-exact (for a smooth $f$)  using assertion I and   Proposition \ref{pcisdeg}(\ref{ipur}),
i.e.,  we proved assertion II.2.
Besides, the adjunctions yield (by Proposition \ref{pbw}(\ref{iadj})): $f_*$ is right weight-exact, $f_!$ is left weight-exact; i.e., assertion II.1 for $f$ is fulfilled.

Now let $f$ be projective. Then $f_!(\dmc(X)_{\wchow=0})\subset \dmc(Y)_{\wchow=0}$ (since $f_!\circ g_!=(f\circ g)_!$ for any $g$, and $f_!$ commutes with Tate twists). By Proposition \ref{pbw}(\ref{iwfun}) we obtain that $f_!=f_*$ is weight-exact.
Hence if $f$ is a finite universal homeomorphism, $f^*$ is weight-exact also (since it is inverse to $f_*$ by Proposition \ref{pcisdeg}(\ref{itr})). The same argument can be applied to the functor $f^!$ (see Remark \ref{ridmot}(3))  and we obtain assertion II.3(i).
 Next, using the adjunctions and  Proposition \ref{pbw}(\ref{iadj}) again, we obtain that $f^!$ is right weight-exact and $f^*$
 is left weight-exact for an arbitrary projective $f$. So, assertion II.1 is fulfilled also in the case where $f$ is projective. 

Assertion II.1 in the general case follows since any 
smoothly embeddable  morphism (by definition) is  a composition of a closed (i.e., projective) immersion with a smooth 
morphism.


III Since $i_*\cong i_!$ in this case, $i_*$ is weight-exact by assertion II.1. $j^*$ is weight-exact by assertion II.2.

1. $\dmc(U)$ is the localization of $\dmc(X)$ by $i_*(\dmc(Z))$
by  Proposition \ref{pcisdeg}(\ref{igluc}). Hence   Proposition \ref{pbw}(\ref{iloc}) yields the result (see Remark \ref{rlift}).

2.  Proposition \ref{pcisdeg}(\ref{igluc}) 
yields: $\wchow(X)$ is exactly the weight structure obtained by 'gluing $\wchow(Z)$ with $\wchow(U)$' via   Proposition \ref{pbw}(\ref{igluws}) (here we use Proposition \ref{pcisdeg}(\ref{igluwsn})). Hence loc. cit. yields the result (note that $j^*=j^!$). 


IV The assertion can be easily proved by induction on the number of strata  using assertion III.2.

V Let $S_{red}$ be a regular scheme; denote by $v$ the canonical immersion $S_{red}\to S$.
Then $v_*(\q_{\sred})\in \dmcs_{\wchow=0}$ by the definition of $\wchow$. Now, $v^*$ is an equivalence of categories (by    Proposition \ref{pcisdeg}(\ref{itr})) that sends $\q_S$ to $\q_{\sred}$ (see Proposition \ref{pcisdeg}(\ref{iupstar})). Hence (applying the adjunction) we obtain $v_*(\q_{\sred})\cong \q_S$.  So, we proved assertion V.2.

In order to verify assertion V.1 we choose a stratification $S=\cup S_\al$ such that 
all $S^\al_{l,\,red}$ are regular. Since we have $j_l^*(\q_S)=\q_{S_l}\in \dmc(S_l)_{\wchow\ge 0}$ (by assertion V.2), assertion IV implies the result.

\end{proof}

\begin{rema}\label{rmotresing}

1. Assertion III.1 yields that any $C\in \obj\,\chow(U)$ is a retract of some $C'$ coming from $\chow(X)$. This fact can be easily deduced from Hironaka's resolution of singularities (if we believe that the composition of morphisms in $\chow(-)$ could be described in terms of algebraic cycles; cf. Remark \ref{rtwchow}(4)) in the case where $X$ is a variety over a characteristic $0$ field. Indeed, then any  projective regular $U$-scheme $Y_U$ possesses a  projective regular $X$-model $Y$ (since one can resolve the singularities of any projective  model $Y'/X$ of $Y_U$ by a morphism that is an isomorphism over $U$). The author does not know any analogues of this argument in the case of a general (reasonable) $X$ (even with alterations instead of modifications, since it does not seem to be known whether there  exists an alteration of $Y'$ that is \'etale over $U$).

So, assertion III.1 could be called (a certain) {\it motivic resolution of singularities} (over a reasonable $X$). Certainly, applying the assertion repeatedly one can easily extend it to the case where $X\setminus U$ is not necessarily regular (but $U$ is open in $X$).

Alternatively, one can note here that for any $C\in \obj\chow(U)$ we have $j_!(C)\in \cu_{w\le 0}$ and $j_*(C)\in \cu_{w\ge 0}$. Hence the natural morphism $j_!(C)\to j_*(C)$ can be factored through $C_X=j_*(C)_{\wchow \le 0}\in \obj \chow(X)$ (or through $j_!(C)_{\wchow \ge 0}\in \obj \chow(X)$; see Proposition \ref{pbw}(\ref{iext})), whereas $C$ is a retract of $j^*(C_X)$.

Actually, any object of $\chow(U)$  comes from $\chow(X)$ itself; see Theorem 1.7 of \cite{wildic}.

2. The following statement is a trivial consequence of part I of the Theorem along with Proposition 
\ref{pcisdeg}(\ref{ipur}): if $f:X\to S$ is a smooth morphism, then for any $M\in \obj \dmcs,\ m\in \z$ we have: $f^*(M)\in \dmcx_{\wchow\ge m}$ (resp. $f^*(M)\in \dmcx_{\wchow\le m}$) if and only if $f^!(M)\in \dmcx_{\wchow\ge m}$ (resp. $f^!(M)\in \dmcx_{\wchow\le m}$).

\end{rema}

Now we prove that positivity and negativity of objects of $\dmcs$ (with respect to $\wchow$)
can be 'checked at points'; this is a motivic analogue of  \S5.1.8 of \cite{bbd}.

\begin{pr}\label{ppoints}

Let $\sss$  denote the set of 
(Zariski) points of $S$; for a $K\in \sss$ we will denote the corresponding morphism $K\to S$ by $j_K$.

Then $M\in \dmcs_{\wchow\ge 0}$ (resp. $M\in \dmcs_{\wchow\le 0}$) if and only if for any $K\in \sss$ we have $j_K^!(M)\in \dmc(K)_{\wchow\ge 0}$ (resp. $j_K^*(M)\in \dmc(K)_{\wchow\le 0}$); see Remark \ref{ridmot}(2).
\end{pr}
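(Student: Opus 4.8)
# Proof Proposal for Proposition \ref{ppoints}

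The plan is to reduce the statement about all points $K \in \sss$ to the already-established stratification criterion, Theorem \ref{tfunctwchow}(IV), by a Noetherian/dimension-induction argument combined with a limit (continuity) passage. The key point is that a single Zariski point $K$ is the limit of the open strata in which it is generic, so I expect to interpolate between the finite stratifications handled by \ref{tfunctwchow}(IV) and the pointwise condition via Proposition \ref{pcisdeg}(\ref{icont}) together with Remark \ref{ridmot}(2).

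First I would prove the ``only if'' direction, which is the easy half. Suppose $M \in \dmcs^{\wchow \le 0}$ (resp. $\dmcs^{\wchow\ge 0}$). For each point $K \in \sss$, the morphism $j_K : K \to S$ factors (after replacing $K$ by a suitable open stratum containing it as generic point) as an open immersion followed by the pro-open immersion of a point into that stratum; by Remark \ref{ridmot}(2), $j_K^!$ and $j_K^*$ are defined on compact objects and $j_K^! = j_K^*$ for pro-open immersions. Since $j_K^*$ is right weight-exact and $j_K^!$ is left weight-exact (these are special cases of Theorem \ref{tfunctwchow}(II)1, extended to the pro-open limit through the continuity of \ref{pcisdeg}(\ref{icont}) as in the proof of \ref{tfunctwchow}(II)3), we immediately get $j_K^!(M) \in \dmc(K)^{\wchow \le 0}$ (resp. $j_K^*(M) \in \dmc(K)^{\wchow \ge 0}$) for every $K$.

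For the ``if'' direction I would argue by Noetherian induction on closed subschemes of $S$, using \ref{tfunctwchow}(IV) as the bridge. Assume $j_K^!(M) \in \dmc(K)^{\wchow \le 0}$ for all points $K$. The idea is that over the generic points of the irreducible components of $S$ the hypothesis gives the bound, and by continuity (\ref{pcisdeg}(\ref{icont})), which identifies $\dmc(K)$ at a generic point with the $2$-limit of $\dmc(V)$ over open neighborhoods $V$, together with the compatibility of $\wchow$ with these inverse-image functors (Theorem \ref{tfunctwchow}(II)2/3), the bound $j_V^!(M) \in \dmc(V)^{\wchow \le 0}$ already holds on some open dense $V \subset S$. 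Shrinking appropriately I obtain a stratum $S_1^\al$ open in $S$ on which the weight bound is satisfied; the complement $Z = S \setminus S_1^\al$ is a closed subscheme of strictly smaller dimension, and its points are among the points of $S$, so the inductive hypothesis supplies the bound on the strata of $Z$. Assembling these into a stratification $S = \cup S_l^\al$ and invoking Theorem \ref{tfunctwchow}(IV) yields $M \in \dmcs^{\wchow \le 0}$. The positive case is dual.

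The main obstacle I anticipate is the limit step: turning the \emph{pointwise} hypothesis at a generic point $K$ into a weight bound on an honest \emph{open} neighborhood $V$. This requires that the weight truncations (or equivalently the membership in $\dmc(V)^{\wchow \le 0}$, characterized orthogonally via Proposition \ref{pbw}(\ref{iort})) be compatible with the $2$-limit identification of \ref{pcisdeg}(\ref{icont}); concretely, one must check that if $j_K^!(M)$ is orthogonal to the relevant shifted generators of $\chow(K)$, then so is $j_V^!(M)$ for some $V$, which follows because $\dmc(K)$-morphism groups are filtered colimits of $\dmc(V)$-morphism groups and the generating Chow motives over $K$ descend to regular projective models over a suitable $V$. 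Verifying this descent of generators and the commutation of $j_V^!$ with the relevant orthogonality conditions is the technical heart of the argument; everything else is the formal stratification bookkeeping already licensed by Theorem \ref{tfunctwchow}(IV).
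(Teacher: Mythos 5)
Your ``only if'' half and your overall skeleton (noetherian induction, spreading a bound from a generic point to an open neighborhood, then the stratification/gluing criterion of Theorem \ref{tfunctwchow}(III2)/(IV)) are sound and close in spirit to the paper. The problem is in the step you yourself single out as the technical heart. You want to deduce, from $j_K^!(M)\in \dmc(K)^{\wchow\le 0}$ at a generic point $K$, that $j_V^!(M)\in \dmc(V)^{\wchow\le 0}$ for some open $V\ni K$, and you propose to verify this by testing orthogonality against the shifted generators of $\chow(V)$ (via Proposition \ref{pbw}(\ref{iort})) and invoking the filtered-colimit description of morphism groups coming from Proposition \ref{pcisdeg}(\ref{icont}). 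This does not work as stated: the vanishing of a filtered colimit $\varinjlim_{V}\dmc(V)(C_V[i],j_V^!M)=\ns$ does \emph{not} imply the vanishing of any single term $\dmc(V)(C_V[i],j_V^!M)$; it only says that each element of each term dies after some further shrinking, and the shrinking depends on the element. Worse, you must control \emph{all} generators $C$ (all regular projective $V$-schemes, all Tate twists) and all shifts $i\ge 1$ simultaneously, and each such condition would demand its own shrinking; nothing reduces this to finitely many conditions. So the quantifiers are in the wrong order, and the step fails as justified.

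The correct way to pass from the generic point to an open neighborhood is to spread the vanishing of a \emph{single morphism}, which is a legitimate use of filtered colimits (an element that dies in the colimit dies at a finite stage). This is exactly what the paper does: it fixes a test morphism $h\in \dmcs(N,M)$ with $N\in \dmcs^{\wchow\ge 1}$ (by the orthogonality characterization of Proposition \ref{pbw}(\ref{iort})), observes that $j_K^*(h)=0$ by orthogonality over $K$ (note $j_K^!=j_K^*$ at a generic point), spreads this single vanishing to some open $U$ by Proposition \ref{pcisdeg}(\ref{icont}), and then uses the exact sequence of Lemma \ref{l4onepo}(II) together with the noetherian induction applied to $Z=S\setminus U$ to force $h=0$. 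Alternatively, your plan can be repaired by replacing your colimit argument with the paper's Lemma \ref{lwcont} (whose proof is independent of the proposition, so no circularity arises): take a weight decomposition $X\to A\to B\stackrel{g}{\to}X[1]$ over $S$, spread the vanishing of the single connecting morphism $g$ to an open $U$, and conclude that $j^*X$ is a retract of $j^*A\in \dmc(U)^{\wchow\le 0}$, whence $j^*X\in \dmc(U)^{\wchow\le 0}$ by Karoubi-closedness. With that lemma in hand, your stratification bookkeeping and induction do complete the proof.
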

\begin{proof}

By Theorem \ref{tfunctwchow}(II.1) if $M\in \dmcs_{\wchow\ge 0}$ (resp. $M\in \dmcs_{\wchow\le 0}$) then for any immersion $f:X\to S$ we have $f^!(M)\in \dmc(X)_{\wchow\ge 0}$ (resp. $f^*(M)\in \dmc(X)_{\wchow\le 0}$). Passing  to the limits with respect to 
immersions corresponding to points of $S$ (see Remark \ref{ridmot}(2)) yields one of the implications.

We prove the converse implication by Noetherian induction. So, suppose that our assumption is true for motives over any closed subscheme of $S$, and that for some $M\in \obj\dmcs$ we have  $j_K^!(M)\in \dmc(K)_{\wchow\ge 0}$ (resp. $j_K^*(M)\in \dmc(K)_{\wchow\le 0}$) for any  $K\in \sss$.

We should prove that $M\in \dmcs_{\wchow\ge 0}$ (resp. $M\in \dmcs_{\wchow\le 0}$).
By Proposition \ref{pbw}(\ref{iort}) it suffices to verify:  for any $N\in \dmcs_{\wchow\le -1}$ (resp. for any $N\in \dmcs_{\wchow\ge 1}$), and any $h\in \dmcs(N, M)$ (resp. any $h\in \dmcs(M,N)$) we have $h=0$. We fix some $N$ and $h$.

By the 'only if' part of our assertion (that we have already proved)
we have  $j_K^*(N)\in  \dmc(K)_{\wchow\le -1}$ (resp. $j_K^*(N)\in  \dmc(K)_{\wchow\ge 1}$); hence $j_K^*(h)=0$. By Proposition \ref{pcisdeg}(\ref{icont}) we obtain that $j^*(h)=0$ for some open embedding $j:U\to S$, where $K$ is a generic point of $U$.

Now suppose that $h\neq 0$; let $i:Z\to S$ denote the closed embedding that is complementary to $j$. Then Lemma \ref{l4onepo}(II) yields that $\dmcs (i^*(N),i^!(M))\neq \ns$ (resp. $\dmcs (i^*(M),i^!(N))\neq \ns$). Yet $i^*(N)\in \dmc(Z)_{\wchow\le -1}$ (resp. $i^!(N)\in \dmc(Z)_{\wchow\ge 1}$) by Theorem \ref{tfunctwchow}(II), whereas $i^! (M)\in \dmc(Z)_{\wchow\ge 0}$ (resp. $i^* (M)\in \dmc(Z)_{\wchow\le 0}$) by the inductive assumption. The contradiction obtained proves 
our assertion.

\end{proof}

Lastly we prove that 'weights are continuous'.

\begin{lem}\label{lwcont}
Let $K$ be a generic point of  $S$; denote the morphism $K\to S$ by $j_K$.

Let $M$ be an object of $\dmcs$, 
and suppose that 
 $j_K^*M\in \dmck_{\wchow\ge 0}$ (resp.  $j_K^*M\in\dmck_{\wchow\le 0}$). Then there exists an open immersion $j:U\to S$, $K\in U$, such that $j^*M\in \dmc(U)_{\wchow\ge 0}$ (resp. $j^*M\in \dmc(U)_{\wchow\le 0}$).

\end{lem}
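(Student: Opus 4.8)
The plan is to obtain $U$ by taking a weight decomposition of $X$ over $S$, restricting it to the generic point $K$, and then spreading out the vanishing of one of its weight truncations by means of the continuity of Beilinson motives. I treat the $\wchow\le 0$ case (the $\wchow\ge 0$ case being dual). First I would fix a weight decomposition of $X$ in $\dmcs$, i.e. a distinguished triangle $B[-1]\to X\to A\to B$ as in \eqref{wd} with $A\in\dmcs^{\wchow\le 0}$ and $B\in\dmcs^{\wchow\ge 0}$, so that $B[-1]\in\dmcs^{\wchow\ge 1}$; here $A,B$ are again compact since $\wchow$ is a weight structure on $\dmcs$ (Theorem \ref{twchow}(I)). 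Since $j_K$ is a projective limit of open immersions with affine connecting morphisms (Remark \ref{ridmot}(2)), the functor $j_K^*$ is weight-exact by Theorem \ref{tfunctwchow}(II3); hence applying $j_K^*$ yields a weight decomposition $j_K^*B[-1]\to j_K^*X\to j_K^*A\to j_K^*B$ of $j_K^*X$ in $\dmck$, with $j_K^*A\in\dmck^{\wchow\le 0}$ and $j_K^*B[-1]\in\dmck^{\wchow\ge 1}$.

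Next I would invoke the standard fact that an object lies in $\cu^{w\le 0}$ exactly when the $w\ge 1$-part in any of its weight decompositions vanishes. This is a one-line consequence of orthogonality (Proposition \ref{pbw}(\ref{iort})): the hypothesis $j_K^*X\in\dmck^{\wchow\le 0}=(\dmck^{\wchow\ge 1})^\perp$ forces the connecting map $j_K^*B[-1]\to j_K^*X$ to vanish, so the restricted triangle splits and $j_K^*B$ becomes a retract of $j_K^*A\in\dmck^{\wchow\le 0}$; being both in $\dmck^{\wchow\le 0}$ (Karoubi-closedness) and in $\dmck^{\wchow\ge 2}\subset\dmck^{\wchow\ge 1}$, it must be zero. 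Thus $j_K^*B=0$.

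It then remains to spread this out. By Proposition \ref{pcisdeg}(\ref{icont}) the category $\dmck$ is the $2$-limit of the $\dmc(U)$ taken over open neighbourhoods $U\ni K$, with connecting functors the restrictions $g_U^*$ (where $g_U\colon U\to S$); concretely $\dmck(j_K^*B,j_K^*B)=\varinjlim_U \dmc(U)(g_U^*B,g_U^*B)$. Hence $j_K^*B=0$ implies $\id_{g_U^*B}=0$, i.e. $g_U^*B=0$, for some open $U\ni K$. Applying $g_U^*$ to the triangle $B[-1]\to X\to A\to B$ gives $g_U^*X\cong g_U^*A$, and since $g_U^*$ is weight-exact (open immersions are smooth, Theorem \ref{tfunctwchow}(II2)) we get $g_U^*A\in\dmc(U)^{\wchow\le 0}$, so $j=g_U$ works. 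The $\wchow\ge 0$ case runs symmetrically: one takes a decomposition with $A\in\dmcs^{\wchow\le -1}$, $B\in\dmcs^{\wchow\ge 0}$, shows $j_K^*A=0$ by the dual orthogonality argument, spreads $g_U^*A=0$ out as above, and concludes $g_U^*X\cong g_U^*B\in\dmc(U)^{\wchow\ge 0}$.

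The main (if modest) obstacle is the application of continuity, namely the identification of $\dmck$ with the $2$-limit of the $\dmc(U)$ over a suitable (essentially affine, cofinal) system of neighbourhoods of $K$, together with the passage between the local scheme $\spe\mathcal{O}_{S,K}$ and $K=\spe\kappa(K)$. Since $K$ is a generic point, the limit of its affine open neighbourhoods has reduction $\spe\kappa(K)$, and as $\dmc(-)$ is insensitive to nilpotents (Proposition \ref{pcisdeg}(\ref{ired})) this reconciles the two, so that Proposition \ref{pcisdeg}(\ref{icont}) and Remark \ref{ridmot}(2) apply as stated. Everything else reduces to formal manipulation with orthogonality and weight-exactness.
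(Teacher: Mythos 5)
Your argument breaks at the claim that $j_K^*B=0$. The ``standard fact'' you invoke --- that an object of $\cu^{w\le 0}$ has vanishing $w\ge 1$-part in \emph{any} of its weight decompositions --- is false: weight decompositions are far from unique, and for any $C\in\dmcs^{\wchow=0}$ the pair $(A\oplus C,\,B\oplus C)$ is again a weight decomposition of $X$ (direct sum of your triangle with $C[-1]\to 0\to C\stackrel{\id}{\to}C$), so $B$ may contain arbitrary superfluous weight-$0$ summands that certainly do not die at $K$ (take $C=\q_S$, whose restriction to $K$ is $\q_K\neq 0$). What orthogonality actually gives is exactly what you wrote one line earlier: the connecting map $j_K^*B[-1]\to j_K^*X$ vanishes, so the restricted triangle splits and $j_K^*X$ (and also $j_K^*B$) is a retract of $j_K^*A$. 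Your attempt to upgrade this to $j_K^*B=0$ rests on the unjustified membership $j_K^*B\in\dmck^{\wchow\ge 2}$; since $B\in\dmcs^{\wchow\ge 0}$, weight-exactness of $j_K^*$ gives only $j_K^*B\in\dmck^{\wchow\ge 0}$, and $\dmck^{\wchow\le 0}\cap\dmck^{\wchow\ge 0}=\dmck^{\wchow=0}$ is certainly not $\ns$. Consequently there is no vanishing \emph{object} to spread out, and the remainder of your argument has nothing to run on.

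The repair is to spread out the vanishing of a \emph{morphism} rather than of an object, which is what the paper does. Orthogonality gives $j_K^*(g)=0$ for the connecting morphism $g:B\to X[1]$ of your triangle (equivalently, of $B[-1]\to X$); since by Proposition \ref{pcisdeg}(\ref{icont}) the group $\dmck(j_K^*B,j_K^*X[1])$ is the inductive limit of the groups $\dmc(U)(j^*B,j^*X[1])$ over open $U\ni K$, there exists some $U$ with $j^*(g)=0$. Then the triangle $j^*X\to j^*A\to j^*B\stackrel{0}{\to}j^*X[1]$ splits, so $j^*X$ is a retract of $j^*A\in\dmc(U)^{\wchow\le 0}$ (weight-exactness of $j^*$ for the smooth morphism $j$, Theorem \ref{tfunctwchow}(II2)), and Karoubi-closedness of $\dmc(U)^{\wchow\le 0}$ finishes the $\wchow\le 0$ case; the other case is dual (Proposition \ref{pbw}(\ref{idual})). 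Note that every ingredient of this fix already appears in your write-up --- the splitting, the retract, Karoubi-closedness, and the continuity of morphism groups --- you simply aimed the continuity argument at the wrong target.
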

\begin{proof}
First consider the case $j_K^*(M)\in \dmck_{\wchow\ge 0}$.
We consider a weight decomposition of $M[1]$: $B\stackrel{g}{\to} M[1]
{\to} A\to B[1]$.
We obtain that $j^*_K(g)=0$ (since $\dmck_{\wchow\le 0}\perp j^*_K(M)[1])$; hence (by Proposition \ref{pcisdeg}(\ref{icont}))  there exists an open immersion $j:U\to S$ ($K\in U$)
such that $j^*(g)=0$. Hence 
$j^*M[1]$ is a retract of $j^*A$. Since $j^*A[-1]\in \dmc(U)_{\wchow\ge 0}$ (see Theorem \ref{tfunctwchow}(II.2)), and $\dmc(U)_{\wchow\ge 0}$ is Karoubi-closed in $\dmc(U)$, we obtain the result.
 
The 
second part of our statement (i.e., the one for the case  $j_K^*(M)\in\dmck_{\wchow\le 0}$) can be easily verified using the dual argument (see Proposition \ref{pbw}(\ref{idual})). 

\end{proof}

\subsection{The 'gluing' construction of $\wchow$ over any  (excellent separated finite-dimensional)  $S$}\label{schowunr}

In this subsection all  
schemes (including the base scheme $S$) will be excellent separated finite-dimensional; we do not assume them to be reasonable. Then we can define the Chow weight structure 
'locally'.
We
explain how to do this (using stratifications and  gluing of weight structures; we call this approach to constructing $\wchow$ the 'gluing method').

First we  describe certain candidates for $\dmcs_{\wchow\ge 0}$ and $\dmcs_{\wchow\le 0}$ (partially they are motivated by Remark \ref{rtwchow}(2)); next we will prove that they yield a weight structure on $\dmcs$ indeed.

For a scheme $X$ we will denote by $\onx$ (resp. $\opx$) the envelope (see the Notation) of
$p_*(\q_P)(s)[i+2s](\cong p_!(\q_P)(s)[i+2s])$ 
in $\dmcx$; here $p:P\to X$ runs through all  
morphisms to $X$ that can be factored as $g\circ h$, where $h:P\to X'$ is a smooth projective morphism, $X'$ is a regular scheme, $g:X'\to X$ is a finite universal homeomorphism,  $s\in \z$, whereas $i\ge 0 $ (resp. $i\le 0$). We denote $\onx\cap \opx$ by $\oz(X)$.

\begin{rema}\label{r1fun}
1. For a morphism $f:Y\to X$ we have (by Proposition \ref{pcisdeg}(\ref{iexch})): $f^*p_!(\q_P)\cong p'_!f'^*(\q_P)=p'_!(\q_{P_Y})$; 
if $p$ is projective (as we demanded in the definition of ($\on(-),\op(-)$)) then we can replace all $-_!$ here with $-_*$. Now suppose that for $X'/X$ as above 
the reduced scheme $Y'_{red}$ associated to $Y'=X'_Y $  is regular. Then $f^*p_*(\q_P)(s)[2s]\in \oz(Y)$.
Indeed,  consider the diagram
\begin{equation}\label{cdred}\begin{CD}
 P_{Y,red}@>{p_r}>> P_Y@>{f_P}>> P\\
 \\@VV{h_{Y,red}}V
@VV{h_Y}V@VV{h}V \\
 Y'_{red}@>{y'_r}>> Y' @>{f'}>> X'\\
 \\@.
@VV{g_Y}V@VV{g}V \\
  @. Y @>{f}>> X
 \end{CD}\end{equation}
where  
$p_r$ and $y'_r$ are the corresponding nil-immersions.
 We have $f^*p_*(\q_P)(s)[2s]
\cong p_{Y*}\q_{P_Y}(s)[2s]\cong p_{Y*}p_{r*}
\q_{P_{Y'_{red}}}(s)[2s]$ (see Remark \ref{ridmot}(3)). We can transform this further into
$(g_Y\circ y'_r)_*h_{Y,red_*}\q_{P_{Y_{red}}}(s)[2s]
\in \oz(Y)$.

Moreover, part \ref{ipura} of Proposition \ref{pcisdeg} (that represents $i^!\q_B$ as a Tate twist of $\q_A$ for an immersion $i:A\to B$ of connected regular schemes)
 easily yields that $f^!p_*(\q_P)(s)[2s]\in \oz(Y)$
if $f$ induces an immersion $Y'_{red}\to X'$ of regular schemes. 
Indeed, we can assume that $Y$ is connected; hence $Y'_{red}$ and $P_{Y,red}$ are connected also. Denote the codimension of $Y'_{red}$ in $ X'$
by $c$; then $p_r\circ f_P:P_{Y,red}\to P$ is an immersion of regular schemes of codimension $c$. Then (arguing as above) we obtain:  $f^!p_*(\q_P)(s)[2s]\cong (g_Y\circ y'_r)_*h_{Y,red_*}(p_r\circ f_P)^!\q_{P}(s)[2s]$. 
Using loc. cit., we transform this into $(g_Y\circ y'_r)_*h_{Y,red_*}\q_{P_{Y_{red}}}(s-c)[2s-2c]
\in \oz(Y)$.




2.  Any morphism $p:X\to S$ for a regular $X$ can be factored through  the underlying reduced subscheme $S_{red}$ of $S$. So, all our descriptions of $\wchow(S)$ (including the ones given below) 
'depend' only on $S_{red}$. This is (certainly) coherent with the (weight-exact) isomorphism $\dmcs\to \dmc(S_{red})$ given by Proposition \ref{pcisdeg}(\ref{itr}). 

Besides, in all the statements of this section  the reader may assume that 
$\onx=\opx=\ns$ unless $X_{red}$ is regular.

3. Moreover, if 
 $X$ is generically 
 of characteristic $0$ 
 then it 
 suffices for our purposes to take $X'=X_{red}$. Indeed, in this case  each finite universal homeomorphism  with
 regular domain is generically of the form $X_{red}\to X$; so we can apply the argument used in the proof of Proposition \ref{pwchowa}(below) for this alternative version of the definition of ($\on(-),\op(-)$) also.
  Hence in the case where $S$ is a  reduced $\q$-scheme, one can assume that all 
the morphisms $p$ (that we use for the description of $\wchow(S)$ given below) 
are smooth projective; this is also true if $S$  is the spectrum of a subring of a number field.

\end{rema}

For a stratification $\al:S=\cup S_l^\al$ we denote by $\onal$ the class $\{M\in \obj \dmcs:\ j_l^! (M)\in \onsl, 1\le l\le n\} $;
$\opal=\{M\in \obj \dmcs:\ j_l^* (M)\in \opsl, 1\le l\le n\} $.

We define: $\dmcs_{\wchow\ge 0}=\cup_\al \onal$, $\dmcs_{\wchow\le 0}=\cup_\al \opal$; here $\al$ runs through all 
stratifications of $S$.

\begin{rema}\label{rnew}
1. It seems that the unions in the definition of $(\dmcs_{\wchow\ge 0},\dmcs_{\wchow\le 0})$ given above are not filtering (if $S$ is not a $\spe\q$-scheme). In particular, we don't have $\on(\al)\subset \on(\al')$ (and $\op(\al)\subset \op(\al')$) for a general subdivision $\al'$ of a stratification $\al$. In order to overcome this difficulty we prove a certain weaker statement instead (see 
 Lemma \ref{lglustr}(3)); it is sufficient for our purposes. In the proof of this result we also treat the question when 
 an element of $\on(\al)$ (or of $\op(\al)$) belongs to   $\on(\al')$ (or to $\op(\al')$, respectively), where $\al'$ is a subdivision of $\al$. 

2. Though we define $(\dmcs_{\wchow\ge 0}, \dmcs_{\wchow\le 0})$ in terms of $\onal$ and $\opal$, there seems to be no way to express our  $\onal$ and $\opal$ in terms of $\wchow$. So, we only use 
$\on(-)$ and $\op(-)$ as  technical notions in the definition of $\wchow$; we could have chosen certain alternative versions of the former (see Remark \ref{rflex}(1) below). Taking all of this into account, the reader should not be scared of 'bad' properties of $\onal$ and $\opal$.

\end{rema}

\begin{lem}\label{lglustr}

1. Let $\de$ be a 
stratification of $S$; we denote the corresponding immersions $S_l^\de\to S$ by $j_l$.
Let $M$ be an object of $\dmcs$.

 Suppose that $j_l^!(M)\in \dmc(S_l^\de)_{\wchow\ge 0}$  (resp. $j_l^*(M)\in \dmc(S_l^\de)_{\wchow\le 0}$) for all $l$.

 Then $M\in \dmcs_{\wchow\ge 0}$ (resp. $M\in \dmcs_{\wchow\le 0}$).

2.   $j_*(\dmc(V)_{\wchow\ge 0})\subset \dmcs_{\wchow\ge 0}$  and  $j_!(\dmc(V)_{\wchow\le 0})\subset \dmcs_{\wchow\le 0}$  for any immersion $j:V\to S$.

3. For any $M\in \dmcs_{\wchow\le 0}$ and $N\in \dmcs_{\wchow\ge 1}(=\dmcs_{\wchow\ge 0}[1])$ there exists a stratification $\al$ of $S$ such that $M\in \opal$, $N\in \onal[1]$.

\end{lem}
\begin{proof}
1. We use induction on the number of strata in $\de$. The $2$-functoriality of motivic upper image functors yields: it suffices to prove the statement for $\de$ consisting of two strata.

So, let $S=U\cup Z$, $Z$ and $U$ are disjoint, $U\neq \ns$ is open  in $S$; we denote the immersions $U\to S$ and $Z\to S$ by $j$ and $i$, respectively.

By the assumptions on $M$, there  exist 
stratifications $\be$ of $Z$ and $\gam$ of $U$ such that $i^!(M)\in \on(\be)$ and $j^!(M)\in \on(\gam)$ (resp. $i^*(M)\in \op(\be)$ and $j^*(M)\in \op(\gam)$). 

We 'unify' $\be$ with $\gam$ and denote the 
stratification of $S$ obtained by $\al$ (for $\# \gam=\Gamma$ we put $S_l^\al=U_l^\gam$ if $1\le l\le \Gamma$ and $S_l^\al=\be_{l-\Gamma}^\gam$ if $l> \Gamma$; note that we really obtain a stratification in our weak sense of this notion this way; see the Notation).
Then  
 the $2$-functoriality of $-^!$ (resp. of $-^*$) yields that
$M\in \onal$ (resp. $M\in \opal$).

2. We choose a   stratification $\de$ containing $V$ (as one of the strata). So we assume that $V=S_v^\de$ for some index $v$. Then it can be easily seen that $j_l^!j_{v*}=0=j_l^*j_{v!}$  for any $l\neq v$ and  $j_v^!j_{v*}\cong 1_{\dm(V)} 
 \cong j_v^*j_{v!}$ (see    Proposition \ref{pcisdeg}(\ref{iglu})). Hence the result follows from
assertion 1.

3. By Remark \ref{r1fun}(1) it suffices to verify: if $\beta, \gamma$ are stratifications of $S$, and  $S_{il}\to S_l^{\beta}$, $S'_{il}\to S_l^{\gamma}$ are  (finite) sets of  
finite universal homomorphisms, 
then there exists  
a common subdivision $\al$ of $\beta,\gamma$ such that all the (reduced) schemes $(S_{il}\times_S {S_{m}^{\al}})_{red}, (S'_{il}\times_S {S_{m}^{\al}})_{red}$ 
are regular. To this end it obviously suffices to prove: if $f:Z\to S$ is an immersion, $g_i:T_i\to Z$ are some finite universal homeomorphisms, 
then there exists a stratification $\de$ of $Z$ such that the schemes
$T_{il}=(T_i\times_Z Z_l^{\de})_{red}$ are regular for all $i$ and $l$.

We prove the latter statement by easy Noetherian induction. Suppose that it is fulfilled for any proper closed subscheme $Z'$ of $Z$. Since all $(T_i)_{red}$ are generically regular, we can choose a (sufficiently small) open non-empty subscheme $Z_1$ of $Z$ such that all of  $(T_{i}\times_Z Z_1)_{red}$ are regular. 

Next, apply the inductive assumption to the scheme $Z'=Z\setminus Z_1$ and the morphisms $g_i'=g_i\times _Z Z'$;
we choose some  stratification $\al'$ of $Z'$ such that all $T'_{il}=(T_i\times_Z {Z'}_l^{\al'})_{red}$ are regular. Then it remains to 'unify' $Z_1$ with
 $\al'$, i.e., we consider the following stratification $\al$: $Z_1^{\al}=Z_1$, and $Z_l^{\al}={Z'}_{l-1}^{\al'}$ for all $l>1$.

\end{proof}

\begin{pr}\label{pwchowa}

I.1. The couple ($\dmcs_{\wchow\ge 0}$, $\dmcs_{\wchow\le 0}$) yields a bounded weight structure $\wchow$ for $\dmcs$.  

2. $\dmcs_{\wchow\ge 0}$ (resp. $\dmcs_{\wchow\le 0}$) is the envelope of $p_*(\q_P)(s)[2s+i]$ (resp. of $p_!(\q_P)(s)[2s-i]$) for $s\in\z$, $i\ge 0$, and $p:P\to S$ being the composition of a smooth projective morphism with a 
finite universal homeomorphism whose base is regular and with an immersion.

II $w(S)$ can be extended to a weight structure $\wchowb$ for the whole $\dms$.

\end{pr}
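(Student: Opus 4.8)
The plan is to verify the four conditions of Definition~\ref{dwstr}(I) for the pair $(\dmcs^{\wchow\le 0},\dmcs^{\wchow\ge 0})$ directly, by noetherian induction on $S$ and repeated use of the gluing statement Proposition~\ref{pbw}(\ref{igluws}); assertions I2 and II then follow formally. The base of the induction is the case of $S$ a field: such an $S$ is reasonable, and for the one-stratum stratification $\on(\al)$ and $\op(\al)$ coincide with the $\le 0$ and $\ge 0$ classes of the weight structure supplied by Theorem~\ref{twchow}, the finite universal homeomorphisms in the definition of $\onx$, $\opx$ serving only to accommodate inseparability (non-perfect residue fields). Karoubi-closedness (part of (i)) and semi-invariance (ii) are immediate, since $j_l^!$, $j_l^*$ are additive and preserve retracts, each $\onsl$, $\opsl$ is an envelope, and the defining generators of $\onsl$ carry shifts $[i+2s]$ with $i\ge0$, so that $\onsl[1]\subset\onsl$ and dually $\opsl\subset\opsl[1]$. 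For additivity, and for everything below, the crucial point is refinement-invariance: if $\gam$ refines $\al$ then $\onal\subset\on(\gam)$ and $\opal\subset\op(\gam)$. I would deduce this from the functoriality of the generating objects recorded in Remark~\ref{r1fun} (their behaviour under $f^*$, and under $f^!$ for an immersion), which shows that restricting an object of $\onsl$ along an immersion onto a finer stratum stays in the relevant class; additivity then follows by passing to a common refinement, the coherence of the whole family being guaranteed by Lemma~\ref{lglustr}(1).

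For orthogonality (iii) the key input is Lemma~\ref{l4onepo}(II). Given $M\in\opal$ and $N$ with $N[-1]\in\on(\be)$, I pass to a common refinement $\gam$; then $j_l^*(M)\in\op(S_l^\gam)$ and $j_l^!(N)\in\on(S_l^\gam)[1]$ for every $l$, and Lemma~\ref{l4onepo}(II) exhibits $\dms(M,N)$ as filtered by subfactors of the groups $\dm(S_l^\gam)(j_l^*M,j_l^!N)$. Each of these vanishes, since $\op(S_l^\gam)\perp\on(S_l^\gam)[1]$ is precisely the negativity of the generators over the stratum (the envelope form of Lemma~\ref{l4onepo}(I1)). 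Hence $M\perp N$, as required.

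The main obstacle is the existence of weight decompositions (iv), because here the reasonable-case shortcut — that $\chows$ generates $\dmcs$ — is unavailable, so I must build decompositions along an adapted stratification rather than invoke a ready-made weight structure on the top stratum. For a given $M\in\obj\dmcs$ I would first decompose $j_K^*M$ over each generic point $K$ (a field, hence reasonable) by Theorem~\ref{twchow}, and spread the decomposition out over a dense open $j\colon U\to S$, which I may shrink to be regular, exactly as in the proof of Lemma~\ref{lwcont} using continuity (Proposition~\ref{pcisdeg}(\ref{icont})); this produces a triangle $A_U\to j^*M\to B_U$ whose ends lie in the candidate classes over $U$. Since $Z=S\setminus U$ has strictly smaller dimension, $\dmc(Z)$ already carries the weight structure by the inductive hypothesis, and mimicking the construction of weight decompositions in the proof of Proposition~\ref{pbw}(\ref{igluws}) along the gluing data $\dmc(Z)\to\dmcs\to\dmc(U)$ yields a decomposition of $M$; that its two ends lie in $\dmcs^{\wchow\le 0}$ and $\dmcs^{\wchow\ge 1}$ is checked stratum-by-stratum from the $j_*$- and $j_!$-stability of Lemma~\ref{lglustr}(2) and the identity $j^!=j^*$ on the open stratum. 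I expect the genuinely delicate point to be the coherence of all this across different stratifications — that the union over all $\al$ behaves as a single pair of classes rather than a mere union — and it is Lemma~\ref{lglustr} together with the functoriality of Remark~\ref{r1fun} that is designed to secure it.

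Granting I1, the rest is formal. For I2, Proposition~\ref{pbw}(\ref{igluwsc}) identifies the $\le 0$ (resp. $\ge 0$) class of a glued bounded weight structure with the envelope of the $i_*$- and $j_*$-images (resp. the $i_*$- and $j_!$-images) of the $w=l$ objects of the strata for $l\le0$ (resp. $l\ge0$); iterating over a stratification and unwinding the generators of $\onsl$ gives exactly the envelope of the $p_*(\q_P)(s)[2s+i]$ with $i\ge0$ and $p$ a composite of a smooth projective morphism, a finite universal homeomorphism with regular base, and an immersion. For II, $\dmcs$ is a full subcategory of compact objects of $\dms$ carrying the bounded weight structure $\wchow$ just constructed and weakly generating $\dms$ (Proposition~\ref{pcisdeg}(\ref{imotgen})), while $\dms$ admits arbitrary small coproducts; so Proposition~\ref{pbw}(\ref{iwegen}) extends $\wchow$ to the asserted weight structure $\wchowb$ on $\dms$, with heart the big hull of $\hwchow$, exactly as in Theorem~\ref{twchow}(II).
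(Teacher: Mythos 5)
Your skeleton coincides with the paper's: noetherian induction, orthogonality via a common subdivision of stratifications plus Lemma~\ref{l4onepo}(I1, II), weight decompositions produced at the generic point and then glued with the closed complement via Proposition~\ref{pbw}(\ref{igluws}), and part II by Proposition~\ref{pbw}(\ref{iwegen}). However, your treatment of the weight decomposition axiom (iv) — the only genuinely hard point — has a gap, and it is a circular one. You decompose $j_K^*M$ using the weight structure of Theorem~\ref{twchow} over the (reasonable) field $K$ and assert that, after spreading out, the two ends land in the candidate classes over some open $U$, on the grounds that over a field the classes of Theorem~\ref{twchow} coincide with $\onk$ and $\opk$, ``the universal homeomorphisms serving only to accommodate inseparability''. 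But the heart of the Theorem~\ref{twchow} structure over $K$ is generated by $f_*(\q_X)(r)[2r]$ with $X$ \emph{regular} projective over $K$, whereas the candidate classes admit only the restricted generators: smooth projective over a finite universal homeomorphism with regular source. That every regular projective $K$-variety's motive lies in the envelope of these restricted generators is exactly Corollary~\ref{cpoints} (specialized to fields), whose proof in the paper relies on the functoriality package of Proposition~\ref{pfunctwchowa} — which is itself built on top of the proposition you are proving, applied among other things to the positive-dimensional scheme $X$, not just to its base. So this identification is a consequence of the statement, not an available input.

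Nor can the gap be patched by simply enlarging the candidate classes to allow all projective generators with regular domain (Remark~\ref{rflex}(1) says the classes agree, but again only as a corollary of the proposition): regularity, unlike smoothness, is destroyed by base change, so the refinement/common-subdivision arguments you invoke (Remark~\ref{r1fun}(1), Lemma~\ref{lglustr}) — which require pulling generators back along immersions onto finer strata and recognizing the result as a generator — break down for such objects; this is the very reason the restricted form of the generators appears in the definition. The paper's way around both problems is to pass to the perfect closure $K^p$, where the generators supplied by Proposition~\ref{pcisdeg}(\ref{igenc}) are automatically \emph{smooth} projective; to descend the finitely many $P_i$ to a finite purely inseparable extension $K'/K$; to spread out, via Proposition~\ref{pcisdeg}(\ref{icont}) and Zariski's main theorem, to a finite universal homeomorphism $S'\to U$ with $S'$ regular carrying smooth projective $P_{S',i}\to S'$; and then — instead of spreading out a weight decomposition or a membership in an envelope, a step your write-up also leaves unjustified — to spread out \emph{generation}: $j^*M$ lies in the subcategory $D\subset\dmc(U)$ generated by the resulting negative objects, so Proposition~\ref{pbw}(\ref{igen}) produces a weight structure on $D$, which is glued with the inductively given structure on $\dmc(Z)$ along the restricted gluing data $\dmc(Z)\to (j^*)^{-1}(D)\to D$ to decompose $M$. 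Without this detour through $K^p$ and through generation, your induction does not close.
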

\begin{proof} 

I We prove the statement by Noetherian induction. So, we suppose that assertions I.1 and I.2 are fulfilled for all proper closed subschemes of $S$. We prove them for $S$.

We denote the envelopes mentioned in assertion I.2 by $(\dmcs_{\wchow'\ge 0},\dmcs_{\wchow'\le 0})$. We should prove that $\wchow$ and $\wchow'$ yield coinciding weight structures for $\dmcs$.

Obviously,  $\dmcs_{\wchow\le 0},\dmcs_{\wchow\ge 0}$, $\dmcs_{\wchow'\le 0}$, and $\dmcs_{\wchow'\ge 0}$ are Karoubi-closed in $\dmcs$, and are semi-invariant with respect to translations (in the appropriate sense).

Now, Lemma \ref{lglustr}(2) yields that $\dmcs_{\wchow'\le 0}\subset \dmcs_{\wchow\le 0}$ and $\dmcs_{\wchow'\ge 0}\subset \dmcs_{\wchow\ge 0}$. Hence in order to verify that $\wchow$ and $\wchow'$ are weight structures indeed, it suffices to verify: 

(i) the orthogonality axiom for $\wchow$ 

(ii) any  $M\in \obj\dmcs$ possesses a weight decomposition with respect to $\wchow'$.

Hence these statements along with the boundedness of $\wchow$ imply assertion I.1.
Besides, Proposition \ref{pbw}(\ref{iuni}) yields that these two statements imply assertion I.2 also, whereas in order to prove I.1 it suffices to verify the boundedness of $\wchow'$ (instead of that of $\wchow$).

Now we verify (i). For some fixed $M\in \dmcs_{\wchow\le 0}$ and $N\in \dmcs_{\wchow\ge 1}$ we check that $M\perp N$. 
By Lemma \ref{lglustr}(3), we can assume that $M\in \opal$, $N\in \onal[1]$ for some stratification $\al$ of $S$.
Hence it suffices to prove  that $\opal\perp \onal[1]$ for 
 any 
  $\al$.
    The latter statement 
    is an easy consequence of    Lemma \ref{l4onepo} (parts I.1 and II).

Now we verify (ii) along with the boundedness of $\wchow'$.
 We choose some generic point $K$ of $S$, 
denote by $K^p$ its perfect closure, and by $j_{K^p}:K^p\to S$ the corresponding morphism.  
We fix some $M$. Since $K^p$ is a reasonable scheme, we have $j_{K^p}^*(M)\in \lan \chow(K^p)\ra$ (see 
Proposition \ref{pcisdeg}(\ref{igenc})). 
Moreover, since $K^p$ is perfect, there exist
some smooth projective varieties $P_{i}/K^p,\
1\le i\le n$,
(we denote the corresponding morphisms $P_{i}\to K^p$ by $p_i$) and some $s\in \z$ such that $j_{K^p}^*(M)$ belongs to the
triangulated subcategory of $\dmc(K^p)$ generated by  $\{p_{i*}(\q_{P_{i}})(s)[2s]\}$.
Now we choose some  finite  
universal homeomorphism $K'\to K$ (i.e., a morphism of spectra of fields corresponding to a finite  purely inseparable
extension) such that $P_{i}$ are defined (and are smooth projective) over $K'$. 
By Proposition \ref{pcisdeg}(\ref{icont},\ref{itr}), 
for the corresponding morphisms $j_{K'}:K'\to S$, $p_i':P_{K',i}\to K'$ 
 we have: $j_{K'}^*(M)$ belongs to the
triangulated subcategory of $\dmc(K')$ generated by  $\{p'_{i*}(\q_{P_{K',i}})(s)[2s]\}$. 
Applying Zariski's main theorem in the form of Grothendieck, we 
can choose a finite universal homeomorphism $g$ from a regular scheme $U'$ whose generic fibre is $K'$
to an open $U\subset S$ ($j:U\to S$ will denote the corresponding immersion) and  smooth projective $h_i:P_{U',i}\to U'$ such that the fibres of $P_{U',i}$ over $K'$ are isomorphic to $P_{K',i}$. Moreover, by Proposition \ref{pcisdeg}(\ref{icont}) we can also assume that $(j\circ g)^*(M)$ belongs to the
triangulated subcategory of $\dmc(U')$ generated by  $\{h_{i*}(\q_{P_{U',i}})(s)[2s]\}$.
Then 
Remark \ref{ridmot}(3)
 yields that $j^*(M)$ belongs to the
triangulated subcategory $D$ of $\dmc(U)$ generated by  $\{(g\circ h_i)_{*}(\q_{P_{U',i}})(s)[2s]\}$.

Since $\id_U$ yields a  stratification of $U$,  the set $\{(g\circ h_i)_{*}(\q_{P_{U',i}})(s)[2s]\}$  is negative in $\dmc(U)$ (since $\opal\perp \onal[1]$ for any  $\al$, as we have just proved). Therefore (by Proposition \ref{pbw}(\ref{igen}--\ref{iwgen})) there exists a weight structure $d$ for $D$ such that $D_{d\ge 0}$ (resp. $D_{d\le 0}$) is the envelope of $\cup_{n\ge 0} \{(g\circ h_i)_{*}(\q_{P_{U',i}})(s)[2s+n] \}$ (resp. of $\cup_{n\ge 0} \{(g\circ h_i)_{*}(\q_{P_{U',i}})(s)[2s-n]\}$). We also obtain that $D_{d\ge 0}\subset \dmc(U)_{\wchow'\ge 0}$ and $D_{d\le 0}\subset \dmc(U)_{\wchow'\le 0}$. 

We denote $S\setminus U$ by $Z$ ($Z$ could be empty); $i:Z\to S$ is the corresponding closed immersion.
By the inductive assumption, 
$\wchow$ and $\wchow'$ yield coinciding  bounded 
 weight structures for $\dmc(Z)$. 

We have the gluing datum $\dmc(Z)\stackrel{i_*}{\to}\dmcs \stackrel{j^*}{\to} \dmc(U)$.
We can 'restrict it' to a gluing datum
$$ \dmc(Z)\stackrel{i_*}{\to} j^*{\ob} (D)\stackrel{j_0^*}{\to} D $$ 
 (see  Proposition \ref{pbw}(\ref{igluws})), whereas $M\in \obj (j^*{\ob}(D))$; here $j_0^*$ is the corresponding restriction of $j^*$. 
  Hence by loc. cit. there exists a weight structure $w'$ for $j^*{\ob}(D)$ such that $i_*$ and $j_0^*$ are weight-exact (with respect to the weight structures mentioned). Hence there exists a weight decomposition 
  $B\to M\to A$ of $M$ with respect to $w'$. Besides, there exist $m,n\in \z$ such that $j^*_0(M)\in \dmc(U)_{\wchow'\ge m}$, $j^*_0(M)\in \dmc(U)_{\wchow'\le n}$, $i^!(M)\in \dmc(Z)_{\wchow'\ge m}$, and $i^*(M)\in \dmc(Z)_{\wchow'\le n}$.
 Hence $A[-1],M[-m]\in \dmcs_{\wchow'\ge 0}$; $B, M[-n]\in \dmcs_{\wchow'\le 0}$; 
 here we apply Proposition \ref{pbw}(\ref{igluwsc}). So, we verified (ii) and the boundedness of $\wchow'$. As was shown above, this finishes the proof of assertion I.

II: immediate from assertion I.1; cf. the proof of Theorem \ref{twchow}.

\end{proof}

\begin{pr}\label{pfunctwchowa}
For the version of $\wchow$ constructed in this subsection, the analogues of  all parts of Theorem \ref{tfunctwchow}, as well as of Proposition \ref{ppoints} and Lemma \ref{lwcont} are fulfilled.

\end{pr}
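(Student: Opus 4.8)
The plan is to prove every assertion by the \emph{same} formal manipulations as in the reasonable case (Theorem \ref{tfunctwchow}, Proposition \ref{ppoints}, Lemma \ref{lwcont}), making one substitution. In the reasonable case weight-exactness of a motivic image functor was reduced, via Proposition \ref{pbw}(\ref{iwfun}), to the fact that the functor preserves the heart $\chows=\hwchow$; here we have no explicit description of the heart. Instead I would use the \emph{envelope-of-generators} description of the weight classes given by Proposition \ref{pwchowa}(I2). The workhorse is the elementary remark that, since $\dmcx^{\wchow\le 0}$ and $\dmcx^{\wchow\ge 0}$ are extension-stable and Karoubi-closed (Proposition \ref{pbw}(\ref{iext}) applied to the now-established weight structure) and an exact functor preserves extensions and retracts, an exact $F$ is right (resp. left) weight-exact as soon as it carries the \emph{explicit generators} of $\dmc(Y)^{\wchow\ge 0}$ (resp. of $\dmc(Y)^{\wchow\le 0}$) into $\dmcx^{\wchow\ge 0}$ (resp. $\dmcx^{\wchow\le 0}$).

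Theorem \ref{tfunctwchow}(I) is immediate, since the generating objects are stable under $-(1)[2]$ (which only shifts the Tate index). The central step is that $f^*$ is right weight-exact for \emph{any} $f$: writing a positive generator of $\dmc(Y)^{\wchow\ge 0}$ as $j_!N$ with $j:V\to Y$ an immersion and $N\in\op(V)$ (Proposition \ref{pwchowa}(I2) together with Lemma \ref{lglustr}(2)), base change $f^*j_!\cong j'_!f_V^*$ (Proposition \ref{pcisdeg}(\ref{iexch})) and $f_V^*(\op(V))\subset\op(V')$ (Remark \ref{r1fun}(1)) give $f^*(j_!N)\in j'_!(\op(V'))\subset\dmcx^{\wchow\ge 0}$, again by Lemma \ref{lglustr}(2). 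Dually, for smooth $f$ one has $f^*j_*\cong j'_*f_V^*$, deduced from the second isomorphism of Proposition \ref{pcisdeg}(\ref{iexch}) via purity $f^!\cong f^*(s)[2s]$ and part (I); the same argument with $\on$ replacing $\op$ then shows $f^*$ is also left weight-exact for smooth $f$, so $f^*$ and $f^!\cong f^*(s)[2s]$ are weight-exact for smooth $f$ (this is the analogue of II2, and II3 follows by passing to the limit via Remark \ref{ridmot}(2)).

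The remaining parts of II1 then follow formally. Lemma \ref{lglustr}(2) applied to a closed immersion $i$ (where $i_*=i_!$) shows $i_*$ is weight-exact; Proposition \ref{pbw}(\ref{iadj}) yields $f_*$ left weight-exact for every $f$ (from $f^*$ right weight-exact) and $i^!$ left weight-exact. Factoring a projective $f$ as a closed immersion followed by the projection from a relative projective space and composing right weight-exact functors (Proposition \ref{pbw}(\ref{iadjco})) gives $f_!$ right weight-exact; as $f_!\cong f_*$ for proper $f$, it is weight-exact and $f^!$ is left weight-exact by adjunction. Factoring an arbitrary quasi-projective $f$ as (smooth quasi-projective)$\circ$(closed immersion) and composing then yields all of II1. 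For III, the weight-exactness of $i_*$ and $j^*=j^!$ just obtained, combined with Lemma \ref{lglustr}(1), identifies $\wchow(X)$ with the \emph{gluing} of $\wchow(Z)$ and $\wchow(U)$ in the sense of Proposition \ref{pbw}(\ref{igluws}); III2 is its characterization of the classes $C_1,C_2$, III1 (with $\hwchow$ in place of $\chows$) is Proposition \ref{pbw}(\ref{iloc}) together with Proposition \ref{pcisdeg}(\ref{igluc}), and IV follows by induction on the number of strata from III2. For V, when $\sred$ is regular the object $v_*(\q_{\sred})\cong\q_S$ (for $v:\sred\to S$) lies in $\on(S)\cap\op(S)$, giving V2, and V1 follows from IV applied to a stratification whose strata have regular reductions. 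Finally, the proofs of Proposition \ref{ppoints} and Lemma \ref{lwcont} use only Theorem \ref{tfunctwchow}(II), Proposition \ref{pbw}(\ref{iort}),(\ref{idual}), the continuity property Proposition \ref{pcisdeg}(\ref{icont}), and Lemma \ref{l4onepo}(II) — all now available — so they transfer essentially verbatim.

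The one genuinely new point, and the main obstacle, is the absence of the explicit heart $\chows$ that drove the reasonable-case reductions. The device of checking weight-exactness on the explicit generators of Proposition \ref{pwchowa}(I2), using extension-stability, Karoubi-closedness, base change, and Lemma \ref{lglustr}, is precisely what circumvents this. The subsidiary technical point to verify with care is that the various base-change isomorphisms send generator morphisms to generator morphisms — in particular that $f_V^*$ preserves $\on$ and $\op$, which is exactly Remark \ref{r1fun}(1) — and that $\wchow(X)$ genuinely coincides with the glued weight structure, so that Proposition \ref{pbw}(\ref{igluws}) may legitimately be invoked in III and IV.
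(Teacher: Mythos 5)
Your proof is correct and follows essentially the same route as the paper's (much terser) one: Tate twists and the smooth case handled by reduction to explicit generators, immersions via Lemma \ref{lglustr}(2), the general quasi-projective case by factorization, adjunction and composition of weight-exact functors, and the remaining assertions (III--V, Proposition \ref{ppoints}, Lemma \ref{lwcont}) by repeating the arguments of \S\ref{sfwchow}. The paper merely asserts that the reasonable-case proofs ``carry over without changes''; your substitution of the envelope-of-generators description of Proposition \ref{pwchowa}(I2) (together with extension-stability and Karoubi-closedness of the weight classes) for the here-unavailable explicit heart is precisely the adaptation that claim implicitly requires.
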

\begin{proof}
The proof of 
Theorem \ref{tfunctwchow}(I) carries over to our situation without changes.
The same is true for Theorem \ref{tfunctwchow}(II.1--II.2) for the case of a smooth 
$f$. Lemma \ref{lglustr}(2) yields assertion II.1 of Theorem \ref{tfunctwchow} for the case where $f$ is an immersion. The general case 
of loc. cit. follows from these two immediately.

The (analogues of) 
the remaining parts of the Theorem 
follow from (the analogue of) part II via the same arguments as 
in \S\ref{sfwchow}.

\end{proof}

\begin{coro}\label{cpoints}
1. We have $\chows\subset \hwchow(S)$ (see the definition of $\chows$ in \S\ref{stmain}).

2. For a reasonable $S$ the 'alternative' version of $\wchow$ (constructed above) coincides with the version given by Theorem \ref{twchow}(I).

\end{coro}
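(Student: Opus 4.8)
The plan is to prove the two parts of Corollary \ref{cpoints} by exploiting the characterizing/uniqueness results that have already been established for both constructions. For part 1, the goal is to show $\chows \subset \hwchow$, where $\hwchow$ now refers to the ``gluing'' weight structure of Proposition \ref{pwchowa}. First I would take an arbitrary generating object $p_*(\q_P)(r)[2r]$ of $\chows$, where $p:P\to S$ is projective with $P$ regular. By Proposition \ref{pwchowa}(I2), both $\dmcs^{\wchow\le 0}$ and $\dmcs^{\wchow\ge 0}$ (for the gluing version) are described as envelopes built from objects $p_*(\q_P)(s)[2s+i]$ (with $i\ge 0$) and $p_!(\q_P)(s)[2s-i]$ (with $i\ge 0$) respectively, where now $p$ allows an extra universal homeomorphism in its factorization. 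Taking $i=0$ in both descriptions, the object $p_*(\q_P)(r)[2r]=p_!(\q_P)(r)[2r]$ lies in \emph{both} envelopes simultaneously, hence in $\dmcs^{\wchow\le 0}\cap \dmcs^{\wchow\ge 0}=\dmcs^{\wchow=0}=\obj\hwchow$. Since $\hwchow$ is Karoubi-closed (being the heart of a weight structure, by Definition \ref{dwstr}(I)(i)), and $\chows$ is by definition the Karoubi-closure of these generators, we conclude $\chows\subset \hwchow$.

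For part 2, the plan is to show the two weight structures coincide when $S$ is reasonable. The cleanest route is a uniqueness argument via Proposition \ref{pbw}(\ref{iuni}): if I can show that one weight structure's classes are contained in the other's (in both the $\le 0$ and $\ge 0$ directions), equality follows automatically. I would denote by $\wchow$ the ``basic'' version of Theorem \ref{twchow}(I), whose heart is exactly $\chows$, and by $\wchow'$ the gluing version of Proposition \ref{pwchowa}. Part 1 already gives $\chows\subset \hwchow'$, i.e.\ $\chows \subset \dmcs^{\wchow'=0}$. Now I would invoke Proposition \ref{pbw}(\ref{iwgen}): for the basic construction, $\dmcs^{\wchow\le 0}$ is the envelope of $\cup_{i\ge 0}\chows[i]$ and $\dmcs^{\wchow\ge 0}$ is the envelope of $\cup_{i\le 0}\chows[i]$. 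Since $\chows[i]\subset \dmcs^{\wchow'\le 0}$ for $i\ge 0$ and $\chows[i]\subset \dmcs^{\wchow'\ge 0}$ for $i\le 0$ (using that $\chows\subset \dmcs^{\wchow'=0}$ together with the semi-invariance axiom Definition \ref{dwstr}(I)(ii) applied in the gluing weight structure), and since these latter classes are envelope-closed (extension-stable and Karoubi-closed by Proposition \ref{pbw}(\ref{iext}) and the axioms), I obtain $\dmcs^{\wchow\le 0}\subset \dmcs^{\wchow'\le 0}$ and $\dmcs^{\wchow\ge 0}\subset \dmcs^{\wchow'\ge 0}$. Proposition \ref{pbw}(\ref{iuni}) then forces $\wchow=\wchow'$.

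The step I expect to be the main obstacle is verifying that the generators of $\chows$ genuinely lie in $\dmcs^{\wchow'=0}$ for the gluing version, i.e.\ carefully matching the two families of generating objects. The subtlety is that Proposition \ref{pwchowa}(I2) allows $p$ to factor through a \emph{universal homeomorphism} onto a regular base followed by an immersion, whereas $\chows$ is defined using genuinely projective $p$ with $P$ regular and no universal homeomorphism. I need to confirm that the ``no universal homeomorphism, $i=0$'' special case of the gluing generators reproduces precisely the generators of $\chows$ (taking the identity as the trivial universal homeomorphism and $S$ itself with the trivial stratification), so that the inclusion $\chows\subset \dmcs^{\wchow'=0}$ is clean. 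One must also ensure that over a reasonable $S$ the two weight structures are being compared on the \emph{same} category $\dmcs$ with compatible boundedness, which is guaranteed since both are bounded (Theorem \ref{twchow} gives a bounded $\wchow$ via Proposition \ref{pbw}(\ref{igen}), and Proposition \ref{pwchowa}(I1) asserts boundedness of $\wchow'$). Once these identifications are in place, the uniqueness machinery does the rest with no further computation.
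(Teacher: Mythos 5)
Your part 1 has a genuine gap. You claim that, taking $i=0$ in Proposition \ref{pwchowa}(I2), a generator $p_*(\q_P)(r)[2r]$ of $\chows$ (with $p:P\to S$ an \emph{arbitrary} projective morphism and $P$ regular) lies in both envelopes and hence in the heart. But the morphisms allowed in Proposition \ref{pwchowa}(I2) are only those that factor as (smooth projective morphism) $\circ$ (finite universal homeomorphism with regular base) $\circ$ (immersion). An arbitrary projective morphism with regular source --- for instance a resolution of singularities of $S$, or a blow-up composed with a closed immersion of a regular subscheme --- admits no such factorization in general, so it is simply not among the listed generators; there is no ``matching of families'' available by inspection. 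You do flag this as the main obstacle, but you treat it as a bookkeeping identification to ``confirm,'' whereas it is precisely the nontrivial content of the corollary: the class of generators of $\chows$ is strictly larger than the $i=0$, trivial-homeomorphism, trivial-immersion subfamily of the gluing generators, and showing that the extra objects nevertheless land in $\dmcs^{\wchow=0}$ cannot be done without further machinery.

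The paper closes exactly this gap by a transport argument rather than generator matching: by Proposition \ref{pfunctwchowa} (the analogues of Theorem \ref{tfunctwchow} for the gluing version), regularity of $P$ gives $\q_P\in \dmc(P)^{\wchow=0}$ (the analogue of Theorem \ref{tfunctwchow}(V2), which itself encodes a stratification/purity argument over $P$), and the functor $p_!\cong p_*$ is weight-exact for projective $p$ (from the analogue of Theorem \ref{tfunctwchow}(II1)); weight-exactness of Tate twists then puts every generator $p_!(\q_P)(r)[2r]$ in $\dmcs^{\wchow=0}$, and Karoubi-closedness of the heart yields $\chows\subset\hwchow$. If you want to salvage your plan, you must replace the ``take $i=0$ and match'' step by this transport step (or reprove its substance). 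Your part 2, granted part 1, is correct and is essentially the paper's own argument: the paper invokes Proposition \ref{pbw}(\ref{iwfun}) together with (\ref{iuni}), and since (\ref{iwfun}) is deduced in the paper from (\ref{iwgen}) and (\ref{iext}), your unwinding via envelopes and extension-stability is the same proof written out.
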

\begin{proof}
1. It suffices to verify that $p_*(\q_P)\in \hwchow(S)$ for any regular $P$ and a projective morphism $p:P\to S$. By the previous proposition, we obtain $\q_P\in \dmc(P)_{\wchow=0}$; since $p_!\cong p_*$, we obtain the result.

2. Indeed, denote the 'old' version of $\wchow$ by $v$, and the 'alternative' one by $w$. The previous assertion along with Proposition \ref{pbw}(\ref{iwfun}) yields that $1_{\dmcs}$ is weight-exact with respect to $v$ and $w$. Hence 
Proposition \ref{pbw}(\ref{iuni})  yields the result.
\end{proof}

\begin{rema}\label{rflex} 

1. Proposition \ref{pwchowa} also easily yields that we could have considered larger 
$\oz(X)$ (see the beginning of the subsection): one can take  $\oz(X)=\chow(X)$. 



2. Thus the results of this subsection (in particular) yield a collection of new descriptions of $\wchow(S)$ for the case of a reasonable $S$ (cf. also Remark \ref{r1fun}(3)).

3.  
In the first draft of this paper (only) the gluing method of constructing $\wchow$ was used (this approach was first
proposed in Remark 8.2.4(3) of \cite{bws}, that was in its turn inspired by \cite{bbd}). Next the author proved part 1 of the Corollary. Then (in order to deduce our main results) it remained to note 
that $\chows$ generates $\dmcs$. Luckily, it was easy to prove the negativity of $\chows$ (without relying on the gluing construction of $\wchow$; see Lemma \ref{l4onepo}(I.1)); so the proof was simplified (for a reasonable $S$; note still that the scheme of the proof of loc. cit. is similar to 
the chain of arguments that yields the first part of the Corollary). Yet (as we have noted just above) even for a reasonable $S$ the gluing method gives us some 'new' descriptions of $\wchow$. 
The main disadvantage of the gluing method is that it does not yield an explicit description of the whole $\dmcs_{\wchow=0}$ (though we can describe it as the intersection of
$\dmcs_{\wchow\le 0}$ with $\dmcs_{\wchow\ge 0}$).

4. Let $K$ be a generic point of $S$, $j_K:K\to S$ is the corresponding morphism; assume that for $M\in \obj \dmcs$
we have $j_K^*(M)\in \dmc(K)_{\wchow=0}$. By Theorem \ref{tfunctwchow}(II.3(i)), for any finite universal homeomorphism $K'
\to K$, $j_{K'}:K'\to S$ being the corresponding composition, we also have $j_{K'}^*(M)\in \dmc(K')_{\wchow=0}$. Hence an argument
similar to that  used in the proof of Proposition \ref{pwchowa} easily yields: there exists an open immersion $j:U\to S$, $K\in U$, such that $j^*(M)$ is a retract of 
$(g\circ h)_*\q_P(s)[2s]$, where $h:P\to U'$ is a smooth projective morphism, $U'$ is a regular scheme, $g:U'\to U$ is a finite universal homeomorphism,  $s\in \z$.

5. In \cite{bmm} the author
reduces the conjecture on the existence of the  motivic $t$-structure on $\dmcs$ to the case where $S$ is a universal domain. 
To this end certain gluing arguments are very useful. 

6. Possibly one can use the methods of \cite{hebpo} in order to extend the weight-exactness results for the motivic image functors given by Proposition \ref{pwchowa} to not necessarily 
smoothly embeddable morphisms.

7. Motives with $\z$-coefficients are more 'mysterious' than those with $\q$-ones; yet the author has constructed a certain version of the Chow weight structure for them in \cite{brmz}.

\end{rema}

\section{Applications to (co)homology of motives and other matters}\label{sapcoh}

In \S\ref{swc}  we study weight complexes for $S$-motives (and their compatibility with weight-exact motivic image functors).

In \S\ref{skth} we prove that  $K_0(\dmcs)\cong K_0(\chows)$ (following \cite{bws}), and define a certain 'motivic Euler characteristic' for (separated finite type) $S$-schemes.

In \S\ref{schws} we consider Chow-weight spectral sequences and filtrations for (co)homology of $S$-motives (following \S2.4 of \cite{bws}). We observe that Chow-weight filtrations yield Beilinson's 'integral part' of motivic cohomology (see \S2.4.2 of \cite{bei85} and \cite{scholl}).

In \S\ref{smshgen} we verify that Chow-weight spectral sequences  
 yield the existence of weight filtrations for the 'perverse \'etale homology' of motives over finite type $\q$-schemes (this is not at all automatic for mixed perverse sheaves in characteristic $0$).

In \S\ref{srws} we introduce the notion of  relative weight structure. The axiomatics of those was chosen to be an abstract analogue  of Proposition 5.1.15 of \cite{bbd}. Several properties of relative weight structures are parallel to those for 'ordinary' weight structures.

In \S\ref{sperv} we study the case where $S=X_0$ is a variety over a finite field. In this case the category $\dbm(X_0,\ql)$ of mixed complexes of sheaves possesses a relative weight structure whose heart is the class of pure complexes of sheaves.
Since the \'etale realization of motives preserves weights, we obtain that  (Chow)-weight filtrations for some cohomology theories can be described in terms of the category $\dbm(X_0,\ql)$.

In this section we will always assume that  our base schemes are reasonable. Yet  we also could  have used the 'gluing' version of $\wchow$ (and consider any finite-dimensional noetherian $S$; the main difference is that we would have to put $\hwchow$ instead of $\chow(-)$ everywhere).

\subsection{The weight complex functor for $\dmcs$} \label{swc}

We prove that the weight complex functor (whose 'first ancestor'
was defined by Gillet and Soul\'e)
can be defined for $\dmcs$.

\begin{pr}\label{pwc}
1. The embedding $\chows\to K^b(\chows)$ factors 
 through a certain exact conservative {\it weight complex} functor
$t_S:\dmcs\to K^b(\chows)$.

2. For $M\in \obj \dmcs$, $i,j\in \z$, we have $M\in \dmcs_{[i,j]}$ (see Definition \ref{dwstr}(IV)) if and only if  $t(M)\in K(\chows)_{[i,j]}$ (see Remark \ref{rstws}). 

3. For schemes $X,Y$ let $F:\dmcx\to \dmcy$ be a weight-exact functor of triangulated categories (with respect to the Chow weight structures for these categories; so $F$ could be equal to $i_!$ for a finite type 
projective morphism $i:X\to Y$, or to $j^*$ for a finite type smooth morphism $j:Y\to X$) that possesses a differential graded enhancement. Denote by $F_{K^b(\chow)}$
the corresponding functor $K^b(\chowx)\to K^b(\chowy)$.
Then there exists a choice of $t_X$ and $t_Y$  that makes the diagram
$$\begin{CD}
\dmcx@>{F}>>\dmcy\\
@VV{t_X}V@VV{t_Y}V \\
K^b(\chowx)@>{F_{K^b(\chow)}}>>K^b(\chowy)
\end{CD}$$
commutative up to an isomorphism of functors.

\end{pr}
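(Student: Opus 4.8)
The plan is to derive parts 1 and 2 from the general theory of weight complexes of \cite{bws}, applied to the bounded weight structure $\wchow$, and to obtain part 3 by transporting weight Postnikov towers along $F$.

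First I would record that $\wchow$ is bounded: since $\dmcs$ is generated by $\chows$ (Theorem \ref{twchow}) and $\chows\subset \dmcs^{\wchow=0}$, Proposition \ref{pbw}(\ref{iwgen}) identifies $\dmcs^{\wchow\le 0}$ (resp. $\dmcs^{\wchow\ge 0}$) with the envelope of $\cup_{i\ge 0}\chows[i]$ (resp. $\cup_{i\le 0}\chows[i]$), so every object lies in some $\dmcs^{[i,j]}$. The general construction of \cite{bws} then applies: for each $M$ one chooses a weight Postnikov tower, i.e.\ a compatible system of weight decompositions $w_{\le l}M\to M\to w_{\ge l+1}M$ whose successive cones are shifts of objects of $\chows$, and assembles these cones into a complex over $\chows$ that is well defined up to homotopy equivalence by the (non-canonical) uniqueness of weight decompositions. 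This produces the functor $t_S:\dmcs\to K^b(\chows)$, with exactness and conservativity being the corresponding statements of \cite{bws}; the factorization of $\chows\hookrightarrow K^b(\chows)$ is immediate, since an object of $\hwchow$ is its own weight complex concentrated in degree $0$. The one point needing care is the promotion of the \emph{weak} weight complex functor of \cite{bws} to an honest exact functor valued in $K^b(\chows)$; for this I would invoke the differential graded enhancement of $\dmcs$ coming from the Cisinski--Deglise model, which yields the \emph{strong} weight complex functor.

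For part 2 I would use that $t_S$ reflects the weight interval. If $M\in\dmcs^{[i,j]}$, one may take a weight Postnikov tower supported in weights $[i,j]$, so $t_S(M)\in K(\chows)^{[i,j]}$; conversely, if $t_S(M)\in K(\chows)^{[i,j]}$, then applying $t_S$ to the triangle $w_{\le i-1}M\to M\to w_{\ge i}M$ and to the dual one, together with the conservativity of $t_S$, forces $w_{\le i-1}M$ and $w_{\ge j+1}M$ to vanish, whence $M\in\dmcs^{[i,j]}$. This is exactly the weight-detection assertion of \cite{bws}.

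Finally, for part 3 the key point is that a weight-exact $F$ carries weight decompositions to weight decompositions, hence sends any weight Postnikov tower of $M\in\obj\dmcx$ to a weight Postnikov tower of $F(M)$; since $F$ restricts to an additive functor $\chowx\to\chowy$ inducing $F_{K^b(\chow)}$, the complex assembled from the image tower is precisely $F_{K^b(\chow)}(t_X(M))$. The genuinely delicate issue is \emph{functoriality}: weight Postnikov towers are unique only up to non-canonical homotopy, so the naive comparison does not automatically yield a natural isomorphism $t_Y\circ F\cong F_{K^b(\chow)}\circ t_X$. This is where the hypothesis that $F$ admits a differential graded enhancement enters — using compatible DG enhancements of $\dmcx$, $\dmcy$ and of $F$ one rigidifies the choices defining the strong functors $t_X,t_Y$ so that $F$ transports them. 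I expect this rigidification — making the a priori non-functorial weight complex construction strictly compatible with the enhanced $F$ — to be the main obstacle of the proof.
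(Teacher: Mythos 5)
For parts 1 and 2 your route is essentially the paper's: both reduce to the general theory of \cite{bws} (the strong weight complex functor of Proposition 5.3.3 of ibid., which needs boundedness of $\wchow$ plus a differential graded enhancement of $\dmcs$ — obtained there, as you indicate, from the $qfh$-sheaf description of Theorem 15.1.2 of \cite{degcis} — and the weight-detection statement, which is Theorem 3.3.1(IV) of \cite{bws}).

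For part 3, however, there is a genuine gap. You correctly observe that a weight-exact $F$ carries weight Postnikov towers to weight Postnikov towers, which only yields objectwise, non-canonical isomorphisms $t_Y(F(M))\cong F_{K^b(\chow)}(t_X(M))$ in $K^b(\chowy)$; and you correctly diagnose that the actual content of the assertion is the rigidification of these choices via the differential graded enhancements. But you then stop at ``I expect this rigidification \dots to be the main obstacle of the proof'' — i.e.\ the one step that constitutes the proof of part 3 is left unproved. The paper executes it concretely using the Bondal--Kapranov formalism of \S2 and \S6.1 of \cite{mymot}: since $\dmcx=\lan\chowx\ra$, one may assume $\dmcx=\trp(C_X)$ for a negative differential graded category $C_X$ with $H(C_X)=\chowx$ (Remark 2.7.4(2) of ibid.), and similarly $\dmcy=\trp(C_Y)$ after replacing $\dmcy$ by an equivalent category; the hypothesis that $F$ possesses a differential graded enhancement is used precisely to write $F=\prt(F')$ for a DG functor $F':C_X\to C_Y$. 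With these models, $t_X$ and $t_Y$ are by construction the functors obtained by applying $\trp$ to the projections $C_X\to H(C_X)$ and $C_Y\to H(C_Y)$, so the desired square is obtained by applying $\trp$ to the (already commutative) square
$$\begin{CD}
C_X@>{F'}>>C_Y\\
@VV{}V@VV{}V \\
H(C_X)@>{H(F')}>> H(C_Y)
\end{CD}$$
There is no separate ``coherence'' argument to run afterwards: commutativity holds at the DG level, and the triangulated statement follows functorially. Without producing such explicit compatible models (or an equivalent argument), your part 3 remains a plausible plan rather than a proof.
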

\begin{proof}
1. By  Proposition 5.3.3 of \cite{bws}, this follows from the existence of a bounded Chow weight structure on $\dmcs$ along with the fact that it admits a differential graded enhancement (see Definition 6.1.2 of ibid.; note also that an alternative 'f-category' version of the weight complex functor  was constructed by Beilinson and Schn\"urer; see \S7 of \cite{schn}). The latter property of $\dms$ can be easily verified since it can be described in terms of the derived category of $qfh$-sheaves over $S$; see Theorem 16.1.2 of \cite{degcis} (and also cf. \S6.1  of \cite{bev}).

2. Immediate from   Theorem 3.3.1(IV) of \cite{bws}.

3. We use the notation and definitions of \S2 of \cite{mymot} (that originate mostly from \cite{bk}).

Since $\dmcx=\lan \chows\ra$, 
we can assume that $\dmcx=\trp(C_X)$, where $C_X$ is a negative triangulated category such that $H(C_X)=\chowx$ (see  Remark 2.7.4(2) of ibid.). Replacing $\dmcy$ by an equivalent category, we may also assume (similarly) that $\dmc(Y)=\trp(C_Y)$, where $C_Y$ is a negative triangulated category such that $H(C_Y)=\chowy$, and $F=\prt(F')$ for some differential graded functor $C_X\to C_Y$. Arguing as in \S6.1 of ibid, we obtain that it suffices to apply $\trp$ to the following diagram:
$$\begin{CD}
C_X@>{F'}>>C_Y\\
@VV{}V@VV{}V \\
H(C_X)@>{H(F')}>> H(C_Y)
\end{CD}$$

\end{proof}

\begin{rema}\label{rwc}

1.  The 'first ancestor' of our weight complex functor  was defined by Gillet and Soul\'e in \cite{gs}. Weight complex for a general triangulated category $\cu$ endowed with a weight structure was defined  in \cite{bws}. Even in the case where $\cu$ does not admit a differential graded enhancement, one can still define a certain 'weak' version of the weight complex; see \S3 of ibid. (and this version does not depend on any choices). It follows that  for $M\in \obj \dmcs$ the isomorphism class of $t_S(M)$ (in $K^b(\chows)$) does not depend on any choices (see ibid.).

2. In \cite{sg} a functor $h$ from the category of Deligne-Mumford stacks
over $S$ (with morphisms being proper morphisms over $S$) to the category of complexes over a certain category of {\it $K_0$-motives}  was constructed; Gillet and Soul\'e considered base schemes satisfying rather restrictive conditions (mostly, of dimension $\le 1$).
We conjecture:  for a 
finite type  
morphism $p:X\to S$ there is a functorial isomorphism $h(X)\to t(\mg_c(X))$, where $\mg_c(X)=p_*p^!(\q_S)$. For $S$ being the spectrum of a characteristic $0$ field this was (essentially) proved in \S6.6 of \cite{mymot}. Note here: though the category of {\it $K_0$-motives} is somewhat 'larger' than $\chows$, it very probably suffices to consider its 'Chow' part (this would be the category $\chows$ considered  in \cite{haco}).

Note that our definition of a weight complex (for $\mg^c(X)$) gives it much more functoriality in $X$ than it was established \cite{sg}; we also study its functoriality with respect to $S$,  and relate it to (co)homology (below).

Besides, we can restrict our definition of weight complexes to (motives with compact support of) quotient stacks (cf. Definition 1.2 of \cite{sg}).
For a finite $G,\ \#G=n$,
acting on a finite type scheme $X/S$ one can take $\mg^c(X/G)=a_{G*}
\mg^c(X)\in \obj\dmcs$. 
Here $a_G$ is the idempotent morphism (correspondence)
$\frac{\sum_{g\in G}g}{n}:X\to X$.  Certainly, for
$G=\{e\}$ we will have $t_S(\mg^c(X/G))=t_S(\mg^c(X))$.

3. Theorem \ref{twchow} along with the results of \cite{bws} also imply:  $t_S$ can be extended to an exact functor $\dms\to K(B\chows)$, where $B\chows$ is the big hull of $\chows$ (see  Definition \ref{dwstr}(IX)). 

4. One can also define exact (and conservative) {\it higher truncation functors} $t_{S,N} $ from $\dmcs$ to certain triangulated $\dmcs_N$ for all $N\ge 0$; cf. \S6.1 of \cite{mymot}.
Here $t_{S,0}=t_S$; $\dmcs_N$ is constructed using differential graded methods that enable  'killing all morphisms' from $\dmcs_{\wchow=0}$ to $\dmcs_{\wchow=i}$ for $i<-N$ and preserving such morphisms for $i\ge -N$. $t_{S,N'}$ factors through $t_{S,N}$ for any $N'>N$, and the $2$-limit of $\dmcs_N$ equals $\dmcs$. $t_{S,N}$ would also satisfy the analogue of Theorem 6.2.1 of ibid. Yet it seems that $t_S=t_{S,0}$ is the most interesting of these 
truncation functors.

\end{rema}

\subsection{$K_0(\dmcs)$ and a motivic Euler characteristic} \label{skth}

Now we  calculate $K_0(\dmcs)$ and study a certain Euler characteristic for (finite type separated) $S$-schemes.

\begin{pr}\label{pkz}

1. We define $K_0(\chows)$  as the Abelian group 
 whose
generators are $[M]$, $M\in \obj \chows$,
and the relations are $[B]=[A]+[C]$ if
$A,B,C\in\obj \chows$ and $B\cong A\bigoplus C$.
For $K_0(\dmcs)$
we take similar generators and  set $[B]=[A]+[C]$ if
$A\to B\to C\to A[1]$ is a distinguished triangle.

Then the embedding $\chows\to \dmcs$  yields
an isomorphism $K_0(\chows)\cong K_0(\dmcs)$.

2. For the correspondence $\chi:X\mapsto [p_*p^!(\q_S)]$ (here $p:X\to S$ is a finite type separated morphism) from the class of finite type separated $S$-schemes to $K_0(\dmcs)\cong K_0(\chows)$ we have: $\chi(X\setminus Z)=\chi(X)-\chi(Z)$ if $Z$ is a closed subscheme of $X$.
\end{pr}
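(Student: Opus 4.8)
The plan is to handle the two assertions by rather different means: assertion 1 is a formal consequence of the existence of a \emph{bounded} weight structure together with the weight complex functor, while assertion 2 reduces to the gluing triangles of Proposition \ref{pcisdeg}(\ref{iglu}).

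For assertion 1, I would set up the embedding map $\iota\colon K_0(\chows)\to K_0(\dmcs)$ (well defined, since a decomposition $B\cong A\oplus C$ yields a split distinguished triangle) and produce an explicit inverse using the weight complex functor $t_S\colon \dmcs\to K^b(\chows)$ of Proposition \ref{pwc}. Sending $[M]$ to the alternating sum $\sum_i(-1)^i[t_S(M)^i]$ defines a homomorphism $\epsilon\colon K_0(\dmcs)\to K_0(\chows)$ (exactness of $t_S$ makes this Euler characteristic additive on distinguished triangles). Since $t_S$ restricts on $\chows$ to the embedding into degree-zero complexes, $\epsilon\circ\iota=\id$, giving injectivity of $\iota$. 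For surjectivity I would argue by induction on the length $b-a$ of the weight range of an $M\in\dmcs^{[a,b]}$ (using boundedness of $\wchow$, Theorem \ref{twchow}): a weight decomposition yields a triangle $w_{\le b-1}M\to M\to w_{\ge b}M\to w_{\le b-1}M[1]$, whose outer terms lie in $\dmcs^{[a,b-1]}$ and in $\chows[-b]$ respectively, so that $[M]=[w_{\le b-1}M]+(-1)^b[C]$ with $C\in\obj\chows$; the base case $a=b$ is immediate. Hence $\iota$ is an isomorphism — this is precisely the general statement for bounded weight structures of \cite{bws}.

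For assertion 2, fix a closed immersion $i\colon Z\to X$ with complementary open immersion $j\colon U=X\setminus Z\to X$, and write $p\colon X\to S$, so that $p_U=p\circ j$ and $p_Z=p\circ i$. Putting $\omega=p^!(\q_S)\in\obj\dmcx$ (compact, since $p^!$ preserves compact objects), the $2$-functoriality of the motivic image functors (Proposition \ref{pcisdeg}(\ref{imotfun})), combined with $j^!=j^*$ for the open immersion $j$ and $i_!\cong i_*$ for the proper morphism $i$ (Proposition \ref{pcisdeg}(\ref{ipur})), lets me rewrite
$$\mg^c(Z)\cong p_*i_!i^!(\omega),\qquad \mg^c(X)=p_*(\omega),\qquad \mg^c(U)\cong p_*j_*j^!(\omega)$$
(writing $\mg^c(X)=p_*p^!(\q_S)$, and similarly for $U$ and $Z$, as in Remark \ref{rwc}). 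The second gluing triangle of Proposition \ref{pcisdeg}(\ref{iglu})(ii) applied to $\omega$ reads $i_!i^!(\omega)\to\omega\to j_*j^*(\omega)\to i_!i^!(\omega)[1]$, and since $j^*=j^!$ its three vertices become exactly the objects displayed above after applying $p_*$. As $p_*$ is exact and preserves compactness, applying it gives a distinguished triangle $\mg^c(Z)\to\mg^c(X)\to\mg^c(U)\to\mg^c(Z)[1]$ in $\dmcs$; passing to $K_0(\dmcs)$ yields $[\mg^c(X)]=[\mg^c(Z)]+[\mg^c(U)]$, i.e. $\chi(X\setminus Z)=\chi(X)-\chi(Z)$.

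The conceptual content, and hence the only real obstacle, sits in assertion 1 and has already been discharged by the earlier machinery: one needs both boundedness of $\wchow$ and a differential graded enhancement of $\dmcs$ (for $t_S$), supplied by Theorem \ref{twchow} and Proposition \ref{pwc}. Assertion 2 is then bookkeeping with the four motivic image functors; the only steps requiring attention are the identifications $j^!=j^*$ and $i_!\cong i_*$, and the observation that every object entering the triangle is compact, so that the resulting relation genuinely lives in $K_0(\dmcs)$.
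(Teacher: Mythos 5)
Your proposal is correct and follows essentially the same route as the paper: for assertion 1 the paper simply invokes Theorem \ref{twchow}(I) together with Proposition 5.3.3(3) of \cite{bws} (whose proof is exactly the weight-complex/bounded-weight-decomposition argument you spell out, using $t_S$ for injectivity and induction on the weight range for surjectivity), and for assertion 2 the paper applies the very same gluing triangle $i_*i^!M \to M \to j_*j^!M$ with $M=p^!\q_S$, followed by $p_*$ and the identifications $i_!\cong i_*$, $j^!=j^*$. The only difference is presentational: you unpack the cited general $K_0$ result rather than quoting it.
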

\begin{proof}
1. Immediate from (part I of) Theorem \ref{twchow} and  Proposition 5.3.3(3) of \cite{bws}.

2. Denote the immersion $Z\to X$ by $i$, and the complementary immersion by $j$.
By Proposition \ref{pcisdeg}(\ref{iglu}) for any $M\in \obj\dmcx$
 we have a distinguished triangle
$i_*i^!(M) \to M \to j_*j^!(M)$ (note that $i_!\cong i_*$ and $j^!=j^*$).
Now for $M=p^!(\q_S)$ this triangle specializes to the triangle $i_*(p\circ i)^!(\q_S) \to p^!(\q_S) \to j_*(p\circ j)^!(\q_S)$. It remains to apply $[p_*(-)]$ and the definition of $K_0(\dmcs)$ to obtain the result.

\end{proof}

\begin{rema}
1. Assertion 2 is a vast extension of Corollary 5.13 of \cite{sg}. It allows us to define certain motivic Euler characteristics for (finite type separated) $S$-schemes.

2. We hope that our results would be useful for the theory of motivic integration.

Note in particular: we obtain that any (not necessarily weight-exact!) motivic image functor $\dmcx\to \dmc(Y)$ induces a group homomorphism $K_0(\chow(X))\to K_0(\chow(Y))$.

 Besides, in contrast to the 'classical' case (where $S$ is the spectrum of a field)   there does not seem to exist a 'reasonable' (tensor) product for $\chows$. Yet $\dmcs$  is a tensor triangulated category; hence one can use assertion 1 in order to define a ring structure on $K_0(\chows)$.


\end{rema}

\subsection{Chow-weight spectral sequences and filtrations}\label{schws}

Now we discuss (Chow)-weight spectral sequences and 
 filtrations for homology and cohomology of motives.  We note that any weight structure yields certain weight spectral sequences for any (co)homology theory; the main distinction of the result below from the general case (i.e., from Theorems 2.3.2 and 2.4.2 of ibid.) is that $T(H,M)$ always converges  (since  $\wchow$ is bounded). Since below we will be mostly interested in weight filtrations for cohomological functors,  we will define them in this  situation only; certainly, dualization is absolutely no problem (cf. \S2.1 of ibid.)

\begin{pr}\label{pwss}
Let $\au$ be an abelian category.

I Let $H:\dmcs\to \au$ be a homological functor; for any $r\in \z$ denote $H\circ [r]$ by $H_r$.

For an $M\in \obj\dmcs$ we denote by $(M^i)$ the terms of $t(M)$ (so $M^i\in \obj \chows$; here we can take any possible choice of $t(M)$). 


Then the following statements are valid.

1. There exists a ({\it Chow-weight})  spectral sequence $T=T(H,M)$ with $E_1^{pq}=
H_q(M^p)\implies H_{p+q}(M)$; the differentials for $E_1T(H,M)$ come from $t(M)$.

2. $T(H,M)$ is $\dmcs$-functorial in $M$ (and does not depend on any choices) starting from $E_2$.

II.1. Let $H:\dmcs\to \au$ be any contravariant functor.
Then for any $m\in \z$ the object $(W^{m}H)(M)=\imm (H(\wchow_{\ge m}M)\to H(M))$
does not depend on the choice of $\wchow_{\ge m}M$; it is functorial in $M$.

We call the filtration of $H(M)$ by $(W^{m}H)(M)$ its {\it Chow-weight} filtration.

2. Let $H$ be a cohomological functor.  For any $r\in \z$ denote $H\circ [-r]$ by $H^r$. 

Then the natural dualization of assertion I is valid.
For any $M\in \obj \dmcs$ we have a spectral sequence 
with $E_1^{pq}=
H^{q}(M^{-p})$; it converges to $H^{p+q}(M)$. Moreover, the step of filtration given by ($E_{\infty}^{l,m-l}:$ $l\ge k$)
 on $H^{m}(X)$ equals $(W^k H^{m})(M)$ (for any $k,m\in \z$). $T$ is functorial in $H$ and $M$ starting from $E_2$.

\end{pr}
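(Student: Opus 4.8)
The plan is to derive both parts as applications of the general theory of weight spectral sequences and weight filtrations from \S2.3--2.4 of \cite{bws} (Theorems 2.3.2 and 2.4.2 there); the only new ingredient is that $\wchow$ is bounded (Theorem \ref{twchow}), which makes the relevant weight Postnikov towers finite and thereby guarantees both the convergence of $T(H,M)$ and the finiteness of the Chow-weight filtration.

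For part I, I would first attach to a fixed $M\in\obj\dmcs$ a weight Postnikov tower: iterating the weight-decomposition axiom (Definition \ref{dwstr}) produces a tower of weight truncations $\wchow_{\le n}M$ whose consecutive cones recover, up to shift, the terms $M^{n}\in\obj\chows$ of the weight complex $t_S(M)$ (Proposition \ref{pwc}). Applying the homological functor $H$ to the triangles of this tower yields an exact couple whose associated spectral sequence has $E_1^{pq}=H(M^p[q])$, the $d_1$-differentials being $H$ applied to the differentials of $t_S(M)$, exactly as in Theorem 2.3.2 of \cite{bws}. Boundedness of $\wchow$ makes the tower finite, so only finitely many $M^p$ are nonzero and the sequence converges to $H(M[p+q])$. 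The functoriality in $M$ from the $E_2$-page is inherited from \cite{bws}: although the tower and $t_S(M)$ themselves depend on choices, the isomorphism class of $t_S(M)$ in $K^b(\chows)$ is canonical (Remark \ref{rwc}(1)), and the general formalism shows the induced spectral sequence to be canonical and functorial from $E_2$ onward.

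For part II, I would dualize. The weight filtration $(W^mH)(M)=\imm(H(\wchow_{\le m}M)\to H(M))$ is independent of the chosen truncation because any two candidates for $\wchow_{\le m}M$ are linked by morphisms compatible with their maps to $M$: the orthogonality $\dmcs^{\wchow\ge m+1}\perp\dmcs^{\wchow\le m}$ (Proposition \ref{pbw}(\ref{iort})) pins down the relevant morphisms up to the harmless indeterminacy, so the images in $H(M)$ agree, and functoriality in $M$ is then automatic. The cohomological spectral sequence with $E_1^{pq}=H^{q}(M^{-p})\implies H^{p+q}(M)$ is the contravariant avatar of part I, functorial in both $H$ and $M$ from $E_2$ on, and the identification of its $E_\infty$-filtration step $(l\ge -k)$ with $(W^kH^m)(M)$ is precisely the compatibility between the weight spectral sequence and the weight filtration recorded in Theorem 2.4.2 of \cite{bws}.

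The conceptual content thus all sits inside the bounded-weight-structure formalism of \cite{bws}; the part I would expect to demand the most care is the purely bookkeeping matter of reconciling the $(p,q)$-grading and the shift convention $H^r=H\circ[-r]$ with the weight-truncation indices, and of checking that the filtration cut out by $E_\infty^{l,m-l}$ for $l\ge -k$ is genuinely $(W^kH^m)(M)$ and not an off-by-one variant. No homological input beyond boundedness of $\wchow$ should be required.
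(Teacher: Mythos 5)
Your proposal is correct and follows essentially the same route as the paper: the paper's proof simply cites Theorem 2.3.2 of \cite{bws} for part I, Proposition 2.1.2(2) of \cite{bws} for part II1, and Theorem 2.4.2 of \cite{bws} for part II2, with the only new input being (as you identify) the boundedness of $\wchow$, which guarantees that $T(H,M)$ always converges. Your unpacking of the Postnikov-tower/exact-couple mechanism and of the independence-of-choices argument via orthogonality is just the content of those cited results made explicit, so there is no substantive difference in approach.
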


\begin{proof}

I Immediate from Theorem 2.3.2 of ibid. 

II.1. This is  Proposition 2.1.2(2) of ibid.

2. Immediate from Theorem 2.4.2 of ibid.

\end{proof}

\begin{rema}\label{rintel}

1.  We obtain certain {\it Chow-weight} spectral sequences and
filtrations for any (co)homology of motives. In particular, we have them
for (rational) \'etale and motivic (co)homology of motives.
For the latter theory, we obtain certain results that cannot be proved using 'classical' (i.e., Deligne's) methods, since the latter heavily rely on the degeneration of (an analogue of) $T$ at $E_2$. We will conclude this subsection by studying an example of this sort; we obtain a result that (most probably) could not be guessed using the 'mixed motivic intuition'.

2. $T(H,M)$ can be naturally described in terms of the {virtual $t$-truncations} of $H$
(starting from $E_2$); see \S2.5 of \cite{bws} and \S\ref{svirt} below.

3. We obtain that any (co)homology of any $M\in \obj\dmcs$ possesses a filtration by subfactors of (co)homology of regular projective $S$-schemes.

4. Actually, $(W^{m}H)(M)=\imm (H(\wchow_{\ge m}M)\to H(M))$
does not depend on the choice of $\wchow_{\ge m}M$ and is functorial in $M$ for any contravariant $H:\dmcs\to \au$; see Proposition 2.1.2(2) of \cite{bws}.

\end{rema}

The functoriality of Chow-weight filtrations has quite interesting consequences.

\begin{pr}
Suppose that a scheme $X$ is reasonable; adopt the notation of Proposition \ref{pcisdeg}(\ref{iglu}). 
Let $H:\dmcx\to \au$ be a 
contravariant functor. Then
for all $N\in  \dmc(U)_{\wchow \le 0}$ consider $G(N)= (W^0H)(j_!(N))\subset H(j_!(N))$.

I The following statements are fulfilled.

1. $G(N)$ is $\dmc(U)$-functorial in $N$.

2. (For any $N\in  \dmc(U)_{\wchow \le 0}$) $G(N)$ is a quotient of $H(M)$ for some $M\in \dmcx_{\wchow=0}$.

II Let now $N=j^*(M)(=j^!(M))$ for $M\in \dmcx_{\wchow=0}$. Then the following statements are fulfilled.

1. $G(N)=\imm H(M)\to H(j_!j^!(M))$  (here we apply $H$ to the morphism $j_!j^!(M)\to M$ coming from the adjunction $j_!: \dmc(U) \leftrightarrows \dmcx:j^!$).

2. Let $H=\dmcx(-,\q_X(q)[p])$ for some $q,p\in \z$, and $M=f_*(\q_R)$ for a projective $f:R\to X$, $R$ is regular. Then $G(N)\cong \imm (\operatorname{Gr}^p_\gamma(K_{2q-p}(R)\otimes\q)\to \operatorname{Gr}^p_\gamma(K_{2q-p}(R_U)\otimes\q))$.

\end{pr}
\begin{proof}
I. Assertion 1 is an immediate consequence of Remark \ref{rintel}(4). Next (by Theorem \ref{tfunctwchow}(II.1))  $N\in  \dmcu_{\wchow \le 0}$ implies $j_!(N) \in \dmcx_{\wchow\le 0}$. Hence $w_{\ge 0}
j_!(N) \in \dmcx_{\wchow=0}$ (see Proposition \ref{pbw}(\ref{iext})). This yields assertion 2.

II We have 
$i_*i^*(M) \in  \dmcx_{\wchow\le 0}$. 
Hence the distinguished triangle $$ i_*i^*(M)\to j_!(N)[1](=j_!j^!(M)[1]) \to M[1]$$ (given by Proposition \ref{pcisdeg}(\ref{iglu})) yields a weight decomposition of
$j_!(N)[1]$. This yields assertion II.1.

Now, Proposition \ref{pcisdeg}(\ref{ibormo}) (see also Corollary 14.2.14  of \cite{degcis}) yields $\dmcx(f_*(\q_R),\q_X(q)[p])\cong \operatorname{Gr}^p_\gamma(K_{2q-p}(R)\otimes \q)$.
 Besides, the adjunction $j_!: \dmcu \leftrightarrows \dmcx:j^!$ yields that $$\dmcx(j_!j^!(M),\q_X(q)[p])\cong \dmcu(j^!(M),j^!(\q_X)(q)[p])\cong  \dmcu(f_{U*}(\q_{R_U}),\q_U(q)[p]) \cong \operatorname{Gr}^p_\gamma(K_{2q-p}(R_U)\otimes\q).$$
 It remains to 
 compare the rows of the diagram 
 $$\begin{CD}
H(M)  @>{}>>H(j_!j^!(M))\\
@VV{}V@VV{}V \\
 \operatorname{Gr}^p_\gamma(K_{2q-p}(R)\otimes\q) @>{}>> \operatorname{Gr}^p_\gamma(K_{2q-p}(R_U)\otimes\q)
\end{CD}$$ 

The results of \cite{degcis} (we need the naturality of the isomorphism given by Corollary 14.2.14 of ibid. and also 
 of Proposition \ref{pcisdeg}(\ref{iexch})) yield  that this diagram can be made commutative by (possibly) modifying the left 
column by an isomorphism; this finishes the proof of assertion II.2.

\end{proof}

\begin{rema}
1.  Thus one may say that  $(W^0H)(j_!(N))$
 yields the 'integral part' of  $H(j_!(N))$: we obtain the subobject of  $H^*(j_!(N))$ that  'comes from a nice $X$-model' of $N$ if the latter exists, and a factor of $H(M)$ for some $M\in \dmcx_{\wchow=0}$ in general;
 cf. \cite{bei85}, \cite{scholl}, and  \cite{scholfcohom}. Note here
 that one can also consider $N\in \chow(K)$ for $K$ being a generic point of $X$, since any such $N$ can be lifted to a Chow motif over some $U$ ($K\in U$, $U$ is open in $X$), by Theorem \ref{tfunctwchow}(III.1) combined with Proposition \ref{pcisdeg}(\ref{icont}); cf. also Remark 1.11 of \cite{wildic}. 

 Note that this construction enjoys 'the usual' $\chow(K)$-functoriality; this is an easy consequence of 
 Proposition \ref{pcisdeg}(\ref{icont}) and Remark \ref{rtwchow}(1).

2. Hence we proved  an alternative to Theorem 1.1.6 of \cite{scholl}. 
In more detail: in the setting of our assertion II.2 we can take $X$ being the spectrum of a Dedekind domain, and 
$U$ 'approximating' 
the spectrum $K$ of its fraction field; then the (corresponding) restriction of $H(j_!(N))$ to Chow motives over $K$ satisfies the conditions of loc. cit. Hence in this particular case our version of the 'integral part' of motivic cohomology coincides with the Scholl's one.

An alternative 'mixed motivic' approach to the construction of this integral part is given by Corollary 1.10 of \cite{wildic}. It is equivalent to our one for $H$ being the motivic cohomology (as in assertion II.2); yet the author does not know how to extend loc. cit. to the case of a general $H$.

3. Our description of the 'integral part' of cohomology is very short  and does not rely on any conjectures (in contrast to the description given in \cite{scholfcohom}).

4. It could also be interesting to consider $W^lH^*(j_!(N))$ for $l< 0$.
\end{rema}

\subsection{Application to mixed sheaves: the 'characteristic $0$' case}\label{smshgen}

Suppose that $S'$ is a finite type $\spe \z$-scheme.
Denote by 
$\hetl$ the \'etale realization functor $\dmc(S')\to \dshslz$, where
$\dshslz$ is the Ekedahl's derived category of $\ql$-constructible sheaves (cf. \S1 of \cite{huper}); it could be called \'etale homology (so, it is covariant). 
We will assume below that  $\hetl$  converts the motivic image functors into the corresponding functors for $\dshl(-)$ (it seems that the existence of such a realization is not fully established in the existing literature; yet a forthcoming paper of Cisinski and Deglise should close this gap). 

Now, let $S$ be a finite type (separated) $\q$-scheme. Presenting it as a projective limit of certain $S'^i$ that are flat of finite type over an open subscheme of $\spe \zol$, and using coherent  functoriality of motives and \'etale sheaves, one can construct an exact (covariant) realization functor $\mathbb{H}:\dmcs\to \dsh(S)$, where the latter is the category 
of mixed complexes of $\ql$-\'etale sheaves
as considered    in \cite{huper} (i.e., it is the inductive limit of derived categories of complexes sheaves that are constructible and mixed with respect to 'horizontal' stratifications of 'models' of $S$). Indeed, the arguments of ibid. can be applied here without any problem. The key observation here is that 
$\q_S$ is  mixed (as an object of $\inli D^bSh^{et}(S'^i,\ql)\supset \dsh(S)$), whereas (the \'etale sheaf) image functors preserve mixedness.

The functoriality properties of $\mathbb{H}$ also yields that 
it sends Chow motives over $S$ to pure complexes of sheaves (of weight $0$; see Definition 3.3 of \cite{huper}). Indeed, it suffices to note that $\mathbb{H}$ 
sends $\q_X$ for a regular $X$ to an object of $\dsh(X)$  of weight $0$, whereas
 $f_!=f_*$ for a projective $f$ preserves weights of sheaves (see 
 the Remark succeeding Definition 3.3 of 
 \cite{huper}).
We easily obtain the following statement.

\begin{pr}\label{perz} 
Take $H^{per}$ being the perverse \'etale homology theory,  i.e., 
$H^{per}_i(M)$ (for any $M\in \obj \dmcs$, $i\in\z$) is the $i$-th homology of $\mathbb{H}(M)$ with respect to the perverse $t$-structure of $\dsh$ (see  Proposition 3.2 of \cite{huper}).
 Then  $H^{per}_i(M)$ have weight filtrations (defined using Definition 3.7 of loc. cit., for all $i\in\z$), i.e., it has a  
 increasing filtration whose $j$-th factor is of weight $j$.

\end{pr}
\begin{proof} The existence (and convergence) of the corresponding Chow-weight spectral sequence $T=T(H^{per},M)$ (see Proposition \ref{pwss})(I.1), or Theorem 2.3.2 of \cite{bws}) yields that $H^{per}_i(M)$ possesses a (Chow-weight) filtration whose $r$-th factor (for any $r\in \z$) is a subquotient of $H_{i+r}^{per}(M^{-r})$. Now, $H_{i+r}^{per}(M^{-r})$
is of weight $i+r$ (since this is true for $M^{-r}[r+i]$; cf. Theorem 5.4.1 of \cite{bbd}). 
By Proposition 3.4 of 
ibid. we obtain that the same is true for any its subquotient. 
Shifting the indexing of the Chow-weight filtration, we obtain a filtration of $H^{per}_i(M)$ with the properties required.
 \end{proof}

 \begin{rema}
 Note that the existence of weight filtrations for mixed perverse sheaves  is not at all automatic (in this setting); see 
 the Warning preceding Proposition 3.4 of 
 loc. cit.
 \end{rema}


\subsection{Relative weight structures}\label{srws}

In order to define weights for  mixed complexes of sheaves (over a finite field), we have to generalize the definition of a weight structure.

\begin{defi}\label{drwstr}

I Let $F:\cu\to\du$ be an exact 
 functor (of triangulated categories).

A pair of extension-stable Karoubi-closed 
subclasses $\cu_{w\le 0},\cu_{w\ge 0}\subset\obj \cu$ for
a triangulated category $\cu$ will be said to define a relative weight
structure $w$ for $\cu$ with respect to $F$ (or just an $F$-weight structure)
if
they  satisfy the following conditions.

(i) {\bf 'Semi-invariance' with respect to translations}.

$\cu_{w\le 0}\subset \cu_{w\le 0}[1]$, $\cu_{w\ge 0}[1]\subset
\cu_{w\ge 0}$.

(ii) {\bf Weak orthogonality}.

$\cu_{w\le 0}\perp \cu_{w\ge 0}[2]$.

(iii) {\bf $F$-orthogonality}.

$F$ kills all morphisms between
$\cu_{w\le 0}$ and $\cu_{w\ge 0}[1]$.

(iv) {\bf Weight decompositions}.

 For any $M\in\obj \cu$ there
exists a distinguished triangle
\begin{equation}\label{rwd}
B\to M\to A\stackrel{f}{\to} B[1]
\end{equation} such that $B\in \cu_{w\le 0}, A\in \cu_{w\ge 0}[1]$.

II We define $\cu_{w\ge i}$, $\cu_{w\le i}$,
$\cu_{w= i}$, $\cu_{[i,j]}$, bounded relative weight structures, and
$\cu^b$ similarly to Definition \ref{dwstr}.

We will call the class $\cu_{w= 0}$ the heart of $w$ (we will not define the category $\hw$).

We will use the same notation for weight truncations with respect to $w$ as the one introduced in Remark \ref{rstws}. We define 
weight-exact functors for relative weight structures as in Definition \ref{dwstr}(VI) (i.e., we do not mention the corresponding $F$'s in the definition).

III Let $H$ be a 
full subcategory of a triangulated $\cu$.

We will say that $H$ is {\it $F$-negative} if
 $\obj H\perp (\cup_{i>1}\obj (H[i]))$ and $F$ kills all morphisms between $H$ and $H[1]$.

\end{defi}

\begin{rema}
1. Any weight structure on $\cu$ is a relative weight structure with respect to $F=1_{\cu}$.

2. An $F$-weight structure is also a  $G\circ F$-weight structure for any exact functor $G:\du\to \eu$ (for any triangulated $\eu$). In particular, one can always take $F=0$. 
 Hence we do not lose  generality by adding the $F$-orthogonality axiom to the definition of relative weight structures.
 Besides, those properties of relative weight structures that do not depend on the choice of $F$ are certainly valid without this axiom.
 The main reason to put the $F$-orthogonality axiom together with the weak orthogonality one is that these conditions could be tracked down using similar methods.

 3. The weak orthogonality axiom is 
 is a strengthening the higher Hom decomposition condition that was studied in Appendix B of \cite{posat}. In particular, our Proposition \ref{pbrw}(\ref{irgen}) can be (more or less) easily deduced from the results of ibid.


\end{rema}

Now we will extend to relative weight structures several properties of weight structures. We will skip those parts of the   proofs that do not differ much from the ones in \cite{bws} (for 'usual' weight structures); we will concentrate on the distinctions.

\begin{pr} \label{pbrw}

Let $F:\cu\to\du$ be an exact functor (of triangulated categories).

In all assertions 
except (\ref{irgen})
we will also assume that $w$ is  a relative weight structure for $\cu$ with respect to $F$.

\begin{enumerate}

\item\label{irdual}
$(C_1,C_2)$ ($C_1,C_2\subset \obj \cu$) define an $F$-weight structure on $\cu$  if and only if
$(C_2^{op}, C_1^{op})$ define a relative weight structure on $\cu^{op}$ with respect to $F^{op}$; here $F^{op}:\cu^{op}\to \du^{op}$ is the functor obtained from $F$ by inverting all arrows.

\item\label{irext} All $\cu_{[i,j]}$ are extension-stable.

\item\label{irffunctwd}  Let $l\ge m\in \z$, $M,M'\in
\obj \cu$. Let
 weight decompositions
        of $M[-m]$   and $M'[-l]$  be fixed; we consider the corresponding triangles $w_{\le m}M\stackrel{b}{\to} M\stackrel{a}{\to} w_{\ge m+1}M  $ and $w_{\le l}M'\stackrel{b'}{\to} M'\stackrel{a'}{\to} w_{\ge l+1}M'$.

         Then
for any $g\in \cu(M,M')$ there exists some morphism of  distinguished triangles
\begin{equation}\begin{CD} \label{d2x3r1}
F(w_{\le m}M)@>{F(b)}>>F(M) @>{F(a)}>> F(w_{\ge m+1}M)
\\
@VV{}V@VV{F(g)}V @VV{}V\\
F(w_{\le l}M')@>{F(b')}>>F(M') @>{F(a')}>> F(w_{\ge l+1}M')
\end{CD}\end{equation}

\item\label{irsffunctwd} In addition to the assumptions of the previous assertion, suppose that $l>m$.

Then there also exists a morphism of triangles 

\begin{equation}\begin{CD} \label{d2x3r2}
w_{\le m}M@>{b}>>M @>{a}>> w_{\ge m+1}M @>{f}>>w_{\le m}M[1]
\\
@VV{c}V@VV{g}V @VV{d}V  @VV{c[1]}V\\
w_{\le l}M'@>{b'}>>M' @>{a'}>> w_{\ge l+1}M'@>{f'}>>w_{\le l}M'[1]
\end{CD}\end{equation}
Moreover,  $(g,a,a')$ determine $F(d)$ uniquely;
$(g,b,b')$ determine $F(c)$ uniquely.

\item\label{iwpt}
 For any $M\in \obj \cu$ any
 choices of $w_{\ge i}M$ (and of the arrows $a_i: M\to w_{\ge i}M$ for all $i\in \z$)
 can be completed to a  {\it weight Postnikov tower} for $M$ (cf. Definition 1.5.8 of \cite{bws}), i.e., for all $j\in \z$ we can choose some
morphisms $c_j:w_{\ge j}M\to w_{\ge j+1}M$ that are compatible with $a_j,\,a_{j+1}$, and for any choice of these $c_j$ we have:  $M^j=\co (c_{-j}(M))[j-1]\in \cu_{w=0}$.

\item\label{ibwpt}
For $j_0,j_1\in \z$, we can choose a weight Postnikov tower for $M$ such that $w_{\ge j}M=0$ for $j>j_1$ and  $=M$ for $j\le j_0$  if and only if $M\in \cu_{[j_0,j_1]}$.

We will call such a  weight Postnikov tower a
{\it bounded} one.

\item\label{irwe} Let $w$ be bounded, $G$ be an exact functor $\cu\to\cu'$; suppose that $\cu'$ is endowed with a relative weight structure (with respect to some exact functor $F':\cu'\to \du'$).

Then $G$ is left (resp. right) weight-exact if and only if $G(\cu_{w=0})\subset \cu'_{w'\le 0}$ (resp. $G(\cu_{w=0})\subset \cu'_{w'\ge 0}$).

\item \label{irgen} 

Let  $H\subset  \cu$ be $F$-negative. 
 Then
there exists a 
  bounded $F$-weight structure $w$ on $T=\lan H\ra$ in $\cu$ such that $\obj H\subset T_{w=0}$.

\end{enumerate}
\end{pr}
\begin{proof}

Assertions \ref{irdual} and \ref{irext} are immediate from Definition \ref{drwstr}.

The proof of assertions \ref{irffunctwd} and \ref{irsffunctwd} is similar to that of Proposition 1.5.1(1,2) 
of \cite{bws}. The axiom (iii) of relative weight structures yields that the composition morphism $F(w_{\le m}M)\to F(w_{\ge l+1}M')$ vanishes. Hence (an easy) Proposition 1.1.9 of \cite{bbd} yields the existence of (\ref{d2x3r1}).

Similarly, we obtain the existence of (\ref{d2x3r2}) if $m<l$. Moreover, any two distinct choices of $d$ (resp. of $c$) are easily seen (see the proof of loc. cit.) to differ by $s\circ f$ (resp. by $(f'\circ s)[-1]$) for some $s\in \cu(w_{\le m}M[1],w_{\ge l+1}M')$. 
Since $F(s)=0$ (by axiom (iii) of relative weight structures), we  conclude the proof of assertion \ref{irsffunctwd}.

The argument needed for the proof of assertion \ref{iwpt} is very similar to the one  used in the proof Theorem 2.2.1(11) of \cite{bger}.

We put $M'=M$, $l=j+1$, $m=j$ in assertion \ref{irsffunctwd}; this yields the existence of some set of $c_j$. Since $\cu_{w\ge 1-j}$ is extension-stable, it contains $\co (c_{-j})$ 
 (for any choice of the latter).
Completing the commutative triangle $M\stackrel{a_{-j}}{\to} w_{\ge -j}M\stackrel{c_{-j}}{\to} w_{\ge 1-j}M$ to an octahedral diagram (as was drawn in loc. cit.), we obtain that $\co (c_{j-1})$ is also a cone of some morphism $w_{\le -j-1}M[1]\to w_{\le -j}M[1]$. Since   $\cu_{w\le 1-j}$ is extension-stable also, we obtain 
assertion \ref{iwpt}.

(\ref{ibwpt}): If   $w_{\ge j_1+1}M=0$  (resp. $w_{\ge j_0}M=M$) then obviously 
  $M\in \cu_{w\ge j_0}$ (resp. $M\in \cu_{w\le j_1}$). Conversely, if $M\in \cu_{[j_0,j_1]}$, then nothing prevents us from choosing $w_{\ge j}M=0$ for all $j> j_1$ and $=M$ for all $j\le j_0$.

(\ref{irwe}): Certainly, if $G$ left (resp. right) weight-exact then $G(\cu_{w=0})\subset \cu'_{w'\le 0}$ (resp. $G(\cu_{w=0})\subset \cu'_{w'\ge 0}$).
Conversely, let $M\in \cu_{w\le 0}$ (resp. $M\in \cu_{w\ge 0}$). By the previous assertion, $M$ possesses a bounded  weight Postnikov tower with $M^i=0$ for $i<0$ (resp. for $i>0$). The structure of the tower yields that $M$ belongs to the envelope of $M^i[-i]$; this concludes the proof of the assertion.

The proof of assertion \ref{irgen} is similar to that of Theorem 4.3.2(II.1) of \cite{bws} (also, one can assume that $F=0$ here).
We take  the envelope of $H[i]$ for $i\le 0$ (resp. for $i\ge 0$) for $\cu_{w\le 0}$ (resp. for $\cu_{w\ge 0}$; see the Notation). Obviously,
 $\cu_{w\le 0}$ and $\cu_{w\ge 0}$ are Karoubi-closed, extension-stable, and satisfy the condition (i) of Definition \ref{drwstr}(I).
 $F$-orthogonality of $H$ easily yields conditions (ii) and (iii) of loc. cit. It remains to verify that any object of $\cu$ possesses a weight decomposition with respect to $w$.

 We define a certain notion of complexity for objects of $\cu$. For $M\in \obj H[i]$ (for some $i\in \z$) we will say that $M$ has complexity $\le 0$. If there exists a distinguished triangle $M\to N\to O$, and $M,O$ are of complexity $\le j$ for some $j\ge 0$ (they also could have smaller complexity) we will say that the complexity of $N$ is $\le j+1$. 
 Since any object of $\lan H\ra$ has finite complexity, it suffices to verify: for a distinguished triangle $M\to N\to O$ if $M,O$ possess weight decompositions (with respect to our $(\cu_{w\le 0},\cu_{w\ge 0}$)), then $N$ possesses a weight decomposition also.

 By assertion \ref{irsffunctwd}, we can complete the morphism $O[-1]\to M$ to a commutative square
 $$\begin{CD} (w_{\le 0}M)[-1]  @>{}>> O[-1] \\ @VV{}V @VV{}V \\
w_{\le 0} M @>{}>>M \\
\end{CD} $$
 Hence by the $3\times 3$-Lemma (i.e., Proposition 1.1.11 of \cite{bbd}) we can complete the distinguished triangle $M\to N\to O$ to a commutative  diagram
 \begin{equation}\label{dia3na3t}
\begin{CD}
w_{\le 0} M @>{}>>M @>{}>> w_{\ge 1}M \\
@VV{}V@VV{}V @VV{ }V\\ N'@>{}>>N
@>{}>>N''\\ @VV{}V@VV{}V @VV{}V\\
w_{\le 0} O@>{}>>O @>{}>> w_{\ge 1}O\\
\end{CD}
\end{equation}
 whose rows and columns are distinguished triangles (cf. Lemma 1.5.4 of \cite{bws}). We have $N'\in \cu_{w\le 0}$, $N''\in \cu_{w\ge 1}$ (by the definition of these classes). Hence $N$ possesses a weight decomposition indeed.

\end{proof}

\begin{rema}
One can also  glue relative weight structures similarly to Proposition \ref{pbw}(\ref{igluws}), and define weight structures for 'pure' localizations as in Proposition \ref{pbw}(\ref{iloc}). 
\end{rema}

As above, for (any) contravariant $H:\cu\to \au$ we denote $H\circ [-r]$ by $H^r$.

\begin{pr}\label{twreal}

I Let  
$H$ be a cohomological functor, $M\in \obj \cu$.
 Fix (any choice of) a bounded weight Postnikov tower for $M$ (see Proposition \ref{pbrw}(\ref{iwpt})).

1. There exists a {\it weight spectral sequence} $T$ with $E_1^{pq}(T)=H^q(M^{-p})\implies E_{\infty}^{p+q}(T)=H^{p+q}(M)$ (cf. Proposition \ref{pwss}(II.2)). 

2.  Denote the step of filtration given by ($E_1^{l,m-l}:$ $l\ge k$)
 on $H^{m}(M)$
by $W^{k}H^{m}(M)$. Then 
\begin{equation}\label{ewfil}
W^{k}H^{m}(M)=\imm (H^m(w_{\ge k}M)\to H^m(M)).
\end{equation}

3. Suppose that a not necessarily cohomological $H:\cu\to \au$ can be factored through $F$.  Then  
$W^{k}H^m(M)$ (defined 
by the formula (\ref{ewfil})) 
is $\cu$-functorial in $M$ (and does not depend on the choice of the corresponding weight decompositions).

II Let $F:\cu\to \du$, $F':\cu'\to \du$, and $G:\cu\to \cu'$ be  exact functors.
Let $w$ be an $F$-weight structure  for $\cu$, $w'$ be an $F'$-weight structure on $\cu'$; suppose that $G$ is weight-exact.

1. $G$ converts $w$-Postnikov towers into $w'$-Postnikov towers.

2. For 
any functor $H':\cu\to \au$ suppose that $H'$ can be factored through
$F'$. 
 Then 
we have $W^{k}H'^m(G(-))=W^{k}H^m(-)$ (here we define the weight filtration via (\ref{ewfil})).


\end{pr}
\begin{proof}

I 1,2: Immediate from the standard  properties of spectral sequences coming from  Postnikov towers; see the Exercises after \S IV.2 of \cite{gelman}.

3: This is an easy consequence of 
Proposition \ref{pbrw}(\ref{irffunctwd}) (cf. the proof of Proposition 2.1.2 of \cite{bws}).
Indeed,  
the right hand side square in the diagram (\ref{d2x3r1}) yields 
the existence of a commutative  diagram
\begin{equation}\begin{CD} \label{efcd}
H^m(w_{\ge k}M')@>{}>>H^m(M') 
\\
@VV{}V @VV{H^m(g)}V \\
H^m(w_{\ge k}M)@>{}>>H^m(M)  
\end{CD}\end{equation}
Now,
it suffices to verify  that for any $g\in \cu(M,M')$ and for any choice of $w_{\ge k}M,\ w_{\ge k}M'$ we have
$H^m(g)(\imm (H^m(w_{\ge k}M')\to H^m(M'))\subset \imm (H^m(w_{\ge k}M)\to H^m(M)$; the latter is an immediate consequence of 
the existence of (\ref{efcd}).


II 1. Obvious.

2. 
Immediate from assertions II.1 and I.3.

\end{proof}

\begin{rema}\label{rrwc}
1. Suppose that there exist $t$-structures $t_{\cu}$ for $\cu$ and $t_{\du}$ for $\du$ such that
$F$ is $t$-exact. Suppose also that for $M\in \cu^{t=0}$ there exists a choice of $w_{\le 0}M$ and  $w_{\ge 1}M$ belonging to $\hrt$. Then the morphism $F(M)\to F(w_{\ge 1} M)$ is
epimorphic in $\hrt_\du$. It follows: for the functor $H=H_0^{t_\du,op}\circ F$ the first level of the weight filtration of $H(F)=F(M)$ is just $F(w_{\ge 1} M)$. Here $H_0^{t_\du,op}$ is the zeroth homology with respect to $t$ with values in the category opposite to $\hrt_{\du}$ (we invert the arrow in order to make the functor cohomological).
So, such weight truncations are '$F$-functorial when they exist'; cf. Remark 1.5.2(2) and \S8.6 of \cite{bws}. Hence the corresponding weight filtrations are functorial also.

2. Unfortunately, it seems that 
weight spectral sequences given by 
the Proposition don't have to be canonical (in general). 

3. Without losing any generality, one can assume that $F=0$ in assertion II.2.

\end{rema}

\subsection{On mixed complexes of sheaves over  finite fields}\label{sperv}

Now let $S=X_0$ be a variety over a finite field $\fq$; let $X$ denote $X_0\times_{\spe \fq}\spe \ff$, where $\ff$ is the algebraic closure of $\fq$.
Let $F$ denote the extension of scalars functor $\dhsxz\to \dhsx$. We consider 
$\hetl:\dmc(X_0)\to \dhsxz$ (cf. \S\ref{smshgen}).

\begin{pr}
 1. The category $\dsh=\dbm(X_0,\ql)(\subset \dhsxz)$ of mixed complexes of sheaves (see \S5.1.5 of \cite{bbd})  can be endowed with an $F$-weight  structure $w_{\dsh}$
  such that $\dsh_{w_{\dsh}\le 0}=D^b_{\le 0}(X_0,\ql)$ (resp. $\dsh_{w_{\dsh}\ge 0}=D^b_{\ge 0}(X_0,\ql)$); here we use the notation from \S5.1.8 of ibid. 
 The heart of  $w_{\dsh}$ is the class of pure complexes of sheaves of weight $0$.

 2. $\hetl$ can be 
  factored through an exact $\mathbb{H}:\dmc(X_0)\to \dsh$; $\mathbb{H}$ is a  weight-exact functor (with respect to $\wchow$ and $w_{\dsh}$).

\end{pr}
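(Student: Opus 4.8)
The plan is to deduce Part 1 by checking the axioms of Definition \ref{drwstr} against the weight formalism of \S5 of \cite{bbd}, and then to obtain Part 2 almost for free from the behaviour of $\mathbb{H}$ on the generators of $\dmc(X_0)$ together with the weight-exactness criterion Proposition \ref{pbrw}(\ref{irwe}). Following the sign-reversed convention of the statement, I set $\dsh^{w_{\dsh}\le 0}$ to be the class of $K\in\obj\dsh$ of weights $\ge 0$ and $\dsh^{w_{\dsh}\ge 0}$ the class of those of weights $\le 0$ (in the sense of \S5.1.8 of \cite{bbd}), so that $\dsh^{w_{\dsh}=0}$ is exactly the class of complexes pure of weight $0$.

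That these two classes are Karoubi-closed and extension-stable is part of the basic stability of weights under retracts and triangles (\S5.1.14 of \cite{bbd}). The translation axiom (i) of Definition \ref{drwstr} is immediate from the fact that the shift $[1]$ raises the weight of a complex by $1$ (so weights $\ge 0$ go into weights $\ge 1$ and weights $\le 0$ into weights $\le 1$). For the weight decompositions (iv) I would invoke the weight-truncation functors of \S5.3 of \cite{bbd}: for any mixed $M$ there is a distinguished triangle $w_{\le -1}M\to M\to w_{\ge 0}M$ with $w_{\le -1}M$ of weights $\le -1$ and $w_{\ge 0}M$ of weights $\ge 0$; rotating it yields a triangle of the form (\ref{rwd}) with $A=w_{\ge 0}M\in\dsh^{w_{\dsh}\le 0}$ and $B=(w_{\le -1}M)[1]\in\dsh^{w_{\dsh}\ge 0}$. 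No functoriality of these truncations is needed.

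The core of Part 1, and the main obstacle, is the verification of the two orthogonality axioms; these are the abstract shadow of Proposition 5.1.15 of \cite{bbd} (which is why Definition \ref{drwstr} was set up as it is). The key inputs are the descent short exact sequence attached to $X=X_0\times_{\spe\fq}\spe\ff$ (Hochschild--Serre for the procyclic group $\mathrm{Gal}(\ff/\fq)$, \S5.1.2 of \cite{bbd})
\[ 0\to \homm_{\dhsx}(FK,FL[-1])_{\mathrm{Fr}}\to \homm_{\dsh}(K,L)\stackrel{F}{\to}\homm_{\dhsx}(FK,FL)^{\mathrm{Fr}}\to 0, \]
the weight estimate that $\homm_{\dhsx}(FK,FL)$ has weights $\ge b-a$ when $K$ has weights $\le a$ and $L$ has weights $\ge b$, and the elementary remark that a Frobenius module of weights $\ge 1$ has vanishing invariants and coinvariants (geometric Frobenius then has no eigenvalue $1$, so $1-\mathrm{Fr}$ is invertible). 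For weak orthogonality (ii), with $K$ of weights $\le 0$ and $L$ of weights $\ge 2$, both $\homm_{\dhsx}(FK,FL)$ (weights $\ge 2$) and $\homm_{\dhsx}(FK,FL[-1])$ (weights $\ge 1$) have no invariants or coinvariants, so the middle term of the sequence vanishes. For $F$-orthogonality (iii), with $K$ of weights $\le 0$ and $L$ of weights $\ge 1$, the group $\homm_{\dhsx}(FK,FL)$ has weights $\ge 1$, hence no invariants, and the right-hand map---which is exactly $\phi\mapsto F(\phi)$---is zero; so $F$ kills every such morphism. The real work here is bookkeeping: matching the shift/sign conventions of $w_{\dsh}$ to those of \cite{bbd} so that the gaps ``$\ge 2$'' and ``$\ge 1$'' come out in the right places.

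For Part 2, the realization $\hetl$ factors through $\dsh$ because $\q_{X_0}$ realizes to a mixed complex and mixedness is preserved by the motivic image functors (Weil II, \S5.1.6 of \cite{bbd}), while $\dmc(X_0)$ is generated by the $g_*(\q_Y)(r)$ with $g$ projective and $Y$ regular (Proposition \ref{pcisdeg}(\ref{igenc})); this is the finite-field analogue of the construction of \S\ref{smshgen}, and the corestriction $\mathbb{H}$ is exact since $\dsh\subset\dhsxz$ is a full triangulated subcategory. Finally, since $\wchow$ is bounded, by Proposition \ref{pbrw}(\ref{irwe}) it suffices to check $\mathbb{H}(\dmc(X_0)^{\wchow=0})\subset\dsh^{w_{\dsh}=0}$, i.e. that $\mathbb{H}$ sends Chow motives to complexes pure of weight $0$. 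This was already observed in \S\ref{smshgen}: $\mathbb{H}(\q_Y)$ is pure of weight $0$ for regular $Y$, $g_!=g_*$ preserves purity for projective $g$, and the twist $(r)[2r]$ preserves weight $0$, with purity inherited by retracts. Hence $\mathbb{H}$ is (left and right) weight-exact.
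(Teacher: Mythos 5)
Your Part 2 coincides with the paper's argument, and your verification of the weak and $F$-orthogonality axioms is correct in substance (it amounts to re-proving Proposition 5.1.15 of \cite{bbd} via the Hochschild--Serre sequence and weight estimates; the paper simply cites the relevant statements of \S5.1 of ibid.). The genuine problem is your treatment of the weight decomposition axiom (iv). You invoke ``weight-truncation functors of \S5.3 of \cite{bbd}'' producing, for every mixed complex $M$, a distinguished triangle $w_{\le -1}M\to M\to w_{\ge 0}M$. No such functors or triangles exist in \cite{bbd}: Th\'eor\`eme 5.3.5 of ibid. provides weight filtrations only for mixed \emph{perverse} sheaves, i.e. for objects of the heart of the perverse $t$-structure, and nothing in \S5.3 extends this to arbitrary objects of $\dbm(X_0,\ql)$. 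The existence of such decompositions for general mixed complexes is exactly the non-formal content of the statement you are proving --- it is the one axiom that the weight formalism of \cite{bbd} does not supply --- so quoting it at this point is circular (historically it was obtained only later, by S.~Morel's weight truncations and by the weight-structure methods of the present paper, neither of which you may presuppose).

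The paper fills precisely this gap by an abstract generation argument: the class of pure weight-$0$ complexes is $F$-negative (this is the content of \S5.1.14--5.1.15 of \cite{bbd}, i.e. of the orthogonality you checked), it is idempotent complete, and it generates $\dbm(X_0,\ql)$ as a triangulated category --- by perverse truncation every mixed complex is a finite extension of shifts of mixed perverse sheaves, by Th\'eor\`eme 5.3.5 each mixed perverse sheaf is a finite extension of pure ones, and a perverse sheaf pure of weight $i$ becomes pure of weight $0$ after the shift $[-i]$. Proposition \ref{pbrw}(\ref{irgen}), whose proof manufactures weight decompositions by induction on complexity via the $3\times 3$ lemma, then yields the relative weight structure with the prescribed heart. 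Your proof becomes correct once the appeal to \S5.3 is replaced by this generation step; everything else you wrote can stay as it is.
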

\begin{proof}
1. Proposition 5.1.14 of \cite{bbd} yields all axioms of $F$-weight structures in our situation except the existence of weight decompositions.
So, by Proposition \ref{pbrw}(\ref{irgen}), it suffices to verify that the category of pure complexes of sheaves of weight $0$ (note that it is idempotent complete) generates $\dsh$. This is immediate from Theorem 5.3.5 of \cite{bbd}.

2. The same arguments as in  \S\ref{smshgen} (actually, a simplification of those, since we don't have to present $S$ as a limit) easily yield the existence of $\mathbb{H}$. We also obtain that it is weight-exact by 
Proposition \ref{pbrw}(\ref{irwe}). 

\end{proof}

\begin{rema}
1. In particular, we obtain that any object $M$ of $\dsh$ possesses a  weight Postnikov tower whose 'factors' are pure complexes of sheaves.

Besides, we obtain that for any (cohomological) $G:\dhsx\to \au$, $H=G\circ F\circ \hetl$,   
the Chow-weight filtration (see Remark \ref{rintel}) $(W^lH)(-)$ can be described as $(W^l_{w_{DSH}}G\circ F) (\hetl(-)) $ (for any $l\in \z$; see Proposition \ref{twreal}(II.2)). 

 Possibly, one could prove some more results of this sort via introducing a 'relative' analogue of the notion of {\it transversal} weight and $t$-structures (that was introduced in \S1 of \cite{btrans}); cf. also Remark \ref{rrwc}(1).

2. So, it is no surprise  that Theorem \ref{tfunctwchow} is a motivic analogue of the 'stability properties' 5.1.14 of \cite{bbd}.

\end{rema}

\appendix
\section{On the Chow $t$-structure and the virtual $t$-truncations}\label{ssupl} 

In \S\ref{sadjstr} we recall the notion of a $t$-structure adjacent to a weight structure (as introduced in \S4.4 of \cite{bws}).

In \S\ref{schowts} we use Theorem 4.5.2 of ibid. to prove the existence of the {\it Chow $t$-structure} on $\dms$ that is adjacent to the Chow weight structure for it (cf. Theorem \ref{twchow}(II)); we also establish certain functoriality properties of this $t$-structure (with respect to the motivic image functors, when $S$ varies).

In \S\ref{svirt} we recall the notion of {\it virtual $t$-truncations} (for cohomological functors from $\dmcs$), and relate virtual $t$-truncations with $\tchow$.

\subsection{On adjacent structures}\label{sadjstr} 

We recall the notion of adjacent weight and $t$-structures (that was introduced in \S4.4 of \cite{bws}). For $t$-structures we will use notation and conventions similar to those for weight structures in \S\ref{sws} (see also \S4.1 of \cite{bws}).
In particular, we will denote the heart of $t$ by $\hrt$ (recall that it is abelian); $\obj\hrt=\cu^{t=0}$.

We will say that  $t$ (for $\cu$) is non-degenerate if $\cap_{n\in \z}\cu^{t\le n}= \cap_{n\in \z}\cu^{t\ge n}=\ns$.

\begin{defi}\label{deadj} We say that a weight structure $w$ is
(left)
{\it adjacent} to a $t$-structure $t$ if $\cu_{w\ge 0}=\cu^{t\le 0}$.

\end{defi}

We will also need the following properties of adjacent structures.

\begin{pr}\label{padstr}

I Suppose that $\cu$
is  endowed with a
weight structure $w$  and also with an adjacent
$t$-structure $t$.

1. The functor $\cu(-,\hrt): \hrt \to \adfu(\hw^{op}, \ab)$ that sends $N\in
\cu^{t=0}$ to $M\mapsto \cu(M,N),\ (M\in \cu_{w=0}),$ is an  exact
embedding of $\hrt$ into the abelian category $\adfu(\hw^{op}, \ab)$.

2. Assume that $t$ is non-degenerate. Then
$\cu^{t=0}=\{M\in \obj \cu:\ \cu_{w=i}\perp M\ \forall \ i\neq 0
\}.$

II Moreover, let a triangulated category $\cu'$
be endowed with a
weight structure
 $w'$ and also with its adjacent
$t$-structure $ t'$. Let $F:\cu\to \cu'$
 be an exact functor.

1. $F$ is right weight-exact if and only if it is 
right  $t$-exact (i.e., if $F(\cu^{t\le 0})\subset \cu'^{t'\le 0}$).

2. Let $G:\cu'\to \cu$ be the right adjoint to $F$. Then $F$ is right (resp. left) weight-exact with respect to $w$ and $w'$ if and only if $G$ is 
left (resp. right) 
$t$-exact with respect to $t'$ and $t$.

III Let $\du\subset \cu$ be a full subcategory of compact objects endowed with a weight structure $w_{\du}$ (we denote its heart by $\hw_{\du}$). Suppose that $\cu$  admits arbitrary (small) coproducts and  that $\du$ weakly generates $\cu$. Then the following statements are valid.

1.  For the weight structure $w$ for $\cu$ given  by Proposition \ref{pbw}(\ref{iwegen}) there exists an adjacent $t$-structure; it is non-degenerate. 
$\hrt$ is isomorphic to $\adfu(\hw_{\du}^{op},\ab)$ (via the functor $N\mapsto (M\in \du^{\hw_{\du}=0}\mapsto \cu(M,N))$).

2. Suppose that $w_{\du'}$ and $\du'\subset \cu'$  satisfy the conditions for $w_{\du}$ and $\du\subset \cu$ described above; denote the corresponding adjacent  structures for $\cu$ by $w'$ and $t'$, respectively.

Let $F:\cu\to \cu'$ be an exact functor that maps $\du$ into $\du'$; suppose that it possesses a right adjoint $G$ that maps $\du'$ in $\du$.   Then the restriction of $F$ to $\du$ is  right (resp. left) weight-exact with respect to $w_{\du}$ and $w'_{\du'}$ if and only if $G$ is left (resp. right) 
$t$-exact with respect to $t'$ and $t$.

\end{pr}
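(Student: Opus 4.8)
The plan is to treat the three parts in turn, exploiting systematically the single defining identity $\cu^{w\le 0}=\cu^{t\le 0}$ of adjacency (and its analogue for $\cu'$) to translate every weight-theoretic condition into a $t$-theoretic one and back. Parts I.1 and I.2 I would deduce from the corresponding statements of \S4.4 of \cite{bws}; the only thing needing a hand check is a pair of orthogonality vanishings. For $N\in\cu^{t=0}$ and $M\in\cu^{w=0}$, a short exact sequence $0\to N'\to N\to N''\to 0$ in $\hrt$ comes from a triangle, and applying $\cu(M,-)$ one gets left-exactness from $\cu(M,N''[-1])=0$ (here $N''[-1]\in\cu^{t\ge 1}$ while $M\in\cu^{w\le 0}=\cu^{t\le 0}$, so this is $t$-orthogonality) and right-exactness from $\cu(M,N'[1])=0$ (here $N'[1]\in\cu^{t=-1}\subset\cu^{t\le-1}=\cu^{w\le-1}$ while $M\in\cu^{w\ge 0}$, so this is weight-orthogonality). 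Faithfulness of the functor in I.1 I would take from ibid. For I.2, the reverse inclusion $\cu^{t=0}\subset\{M:\cu^{w=i}\perp M\ \forall i\ne0\}$ is again just the two orthogonalities above applied in each degree $i\ne 0$, whereas the inclusion $\{M:\cu^{w=i}\perp M\ \forall i\ne0\}\subset\cu^{t=0}$ requires the non-degeneracy hypothesis and I would follow ibid.

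Part II is pure bookkeeping once the conventions are fixed. Assertion II.1 is a tautology: ``$F$ left weight-exact'' reads $F(\cu^{w\le0})\subset\cu'^{w'\le0}$, which by the two adjacency identities is literally $F(\cu^{t\le0})\subset\cu'^{t'\le0}$, i.e. ``$F$ right $t$-exact''. For the ``left'' case of II.2 I would combine this with the adjunction: $F$ left weight-exact $\iff$ $F$ right $t$-exact (by II.1) $\iff$ $G$ left $t$-exact, the last equivalence being the familiar fact that for $X\in\cu^{t\le0}$ and $Y\in\cu'^{t'\ge1}$ one has $\cu'(FX,Y)\cong\cu(X,GY)$, so $FX\in\cu'^{t'\le0}$ for all such $X$ is equivalent to $GY\in(\cu^{t\le0})^\perp=\cu^{t\ge1}$ for all such $Y$. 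For the ``right'' case I would argue directly: if $G$ is right $t$-exact then for $X\in\cu^{w\ge0}$ and $Y\in\cu'^{w'\le-1}=\cu'^{t'\le-1}$ one has $GY\in\cu^{t\le-1}=\cu^{w\le-1}$, whence $\cu'(FX,Y)\cong\cu(X,GY)=0$ by weight-orthogonality; this says $FX\in{}^\perp\cu'^{w'\le-1}=\cu'^{w'\ge0}$ by Proposition \ref{pbw}(\ref{iort}), i.e. $F$ is right weight-exact, and the converse is symmetric.

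For Part III.1 I would invoke the existence theorem for the $t$-structure adjacent to a compactly generated weight structure (the relevant result of \S4.5 of \cite{bws}): one sets $\cu^{t\le0}=\cu^{w\le0}$ and $\cu^{t\ge1}=(\cu^{w\le0})^\perp$, the $t$-structure axioms and non-degeneracy following from compact generation by $\du$ together with Brown representability, and the heart being identified with $\adfu(\hw_{\du}^{op},\ab)$ via $N\mapsto(M\mapsto\cu(M,N))$ --- this is exactly the embedding of Part I.1, which in the compactly generated situation is an equivalence. I would apply the same theorem verbatim to $\cu'$, $\du'$, $w'_{\du'}$ to produce $t'$.

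Finally, Part III.2 I would reduce to Part II.2. First note that $F$, having a right adjoint $G$, preserves all small coproducts; being exact it also preserves extensions and retracts. Since $F$ sends $\du$ into $\du'$, it sends $\du^{w_{\du}=0}$ into $\du'^{w'_{\du'}=0}$ precisely when $F|_{\du}$ is weight-exact, and then --- because $\cu^{w\le0}$ (resp. $\cu^{w\ge0}$) is built from $\du^{w_{\du}\le0}$ (resp. $\du^{w_{\du}\ge0}$) by coproducts, extensions and retracts under Proposition \ref{pbw}(\ref{iwegen}) --- one obtains that $F|_{\du}$ is left (resp. right) weight-exact with respect to $w_{\du},w'_{\du'}$ if and only if $F$ is left (resp. right) weight-exact with respect to the big structures $w,w'$. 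Since $w,t$ are adjacent on $\cu$ and $w',t'$ are adjacent on $\cu'$ by Part III.1, and $F\dashv G$, Part II.2 now yields that $F$ is left (resp. right) weight-exact iff $G$ is left (resp. right) $t$-exact, which is the assertion. The main obstacle throughout is not any single deep step but keeping the left/right and $w$/$t$ dictionary consistent; the one genuinely non-formal input is the existence-and-heart statement of III.1, which I am content to cite from \cite{bws}.
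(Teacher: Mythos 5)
Your handling of Parts I, II and III.1 is correct and essentially coincides with the paper's route: the paper disposes of I, II.2 and III.1 by citing Theorem 4.4.2, Remark 4.4.6 and Theorem 4.5.2 of \cite{bws}, and of II.1 as a tautology from the identity $\cu^{w\le 0}=\cu^{t\le 0}$; your orthogonality checks for I and your direct adjunction argument for II.2 are accurate expansions of those citations.

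Part III.2 is where you have a genuine gap. Your reduction to II.2 rests on the claim that $\cu^{w\le 0}$ (resp. $\cu^{w\ge 0}$) is built from $\du^{w_{\du}\le 0}$ (resp. $\du^{w_{\du}\ge 0}$) by coproducts, extensions and retracts, which you attribute to Proposition \ref{pbw}(\ref{iwegen}). That proposition asserts nothing of the sort: it only says that $w_{\du}$ extends to $\cu$ and identifies the heart with the big hull of $\hw_{\du}$. For the negative class such a generation statement is at least plausible (it reflects how the extension is constructed in \cite{bws}, and $\du^{w_{\du}\le 0}$ is stable under $[1]$, so closure under coproducts and extensions also swallows homotopy colimits). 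For the positive class it is doubtful: $\cu^{w\ge 0}$ is recovered by orthogonality, $\cu^{w\ge 0}={}^\perp(\cu^{w\le -1})$ (Proposition \ref{pbw}(\ref{iort})), rather than by building up from compact objects; objects with unbounded positive weights are homotopy \emph{limits}, not colimits, of their truncations, and a homotopy colimit of objects of $\cu^{w\ge 0}$ only visibly lies in $\cu^{w\ge -1}$, since forming the cone introduces a shift $[1]$, which for the positive class points the wrong way. So the implication ``$F|_{\du}$ right weight-exact $\Rightarrow$ $F$ right weight-exact for the big structures'', which your argument needs, is unsupported. (Also note that the converse half of your claimed equivalence, ``big $\Rightarrow$ small'', does not follow from generation either, but from the easy fact that the big structure restricts to the small one on $\du'$, i.e. $\obj\du'\cap\cu'^{w'\le 0}=\du'^{w_{\du'}\le 0}$, which is formal from weight-exactness of the embedding plus Proposition \ref{pbw}(\ref{iort}).)

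The paper's laconic hint ``by adjunction (we use the description of $\hrt$)'' points to a route that avoids any structural description of the big classes. One direction is cheap: if $G$ is right $t$-exact then, by adjacency, $G$ is left weight-exact for the big structures, so for $X\in\cu^{w\ge 0}$ and $Y\in\cu'^{w'\le -1}$ one gets $\cu'(F(X),Y)\cong\cu(X,G(Y))=0$, i.e. $F$ is right weight-exact for the big structures (the left case is your II.1 plus the standard adjunction fact for $t$-structures); then restrict to $\du$ as above. For the converse one really does use the description of $\hrt$: by adjacency, for $M\in\du^{w_{\du}=0}$ and any $N\in\obj\cu$ one has $\cu(M,N)\cong\cu(M,H^0_t(N))$ (kill both $t$- and weight-truncations by the two orthogonalities, exactly as in your proof of I.1); hence, by non-degeneracy of $t$ and conservativity of the embedding $\hrt\to\adfu(\hw_{\du}^{op},\ab)$ from III.1, membership of $G(Y)$ in $\cu^{t\ge 1}$ (resp. $\cu^{t\le 0}$) is detected by the vanishing of $\cu(M,G(Y)[j])\cong\cu'(F(M),Y[j])$ for all $M\in\du^{w_{\du}=0}$ and the relevant $j$, and these groups vanish by orthogonality in $\cu'$ once $F|_{\du}$ is weight-exact. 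You should replace your generation argument by this one (or supply a proof of the generation statement for $\cu^{w\ge 0}$, which is a nontrivial claim in its own right).
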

\begin{proof}

I These are just parts 4 and 5 of Theorem 4.4.2 of \cite{bws}.

II.1. Immediate from the definition of adjacent structures.

2. See Remark 4.4.6 of ibid.

III 1. Immediate from Theorem 4.5.2 of ibid.

2. Immediate from the previous assertions by adjunction (we use the description of $\hrt$).
\end{proof}

\subsection{The Chow $t$-structure on $\dms$}\label{schowts}

Now we study the $t$-structure adjacent to $\wchowb$. 

\begin{pr}\label{ptchow}
I Let $S$ be an excellent separated finite-dimensional scheme. 

1. There exists a non-degenerate $t$-structure $\tcho(S)$ on $\dms$ that is adjacent to $\wchowb$ (the latter is given either by Theorem \ref{twchow}(II) or by Proposition \ref{pwchowa}(II)).

2. $\htcho(S)\cong \adfu(\chow(S){}^{op},\ab)$ (via the functor $N\mapsto (M\in \dmcs_{\wchow=0}\mapsto \dms(M,N))$).

II Let $f:X\to Y$ be a
 smoothly embeddable morphism of (excellent separated finite dimensional)  schemes. 

1. $f^!$ and $f_*$ are 
right  $\tchow$-exact (with respect to the corresponding Chow $t$-structures).

2. Suppose that $f$ is smooth. Then $f_*$ is (also) $\tchow$-exact.

\end{pr}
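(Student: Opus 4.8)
The plan is to obtain both parts as essentially formal consequences of the adjacency machinery of \S\ref{sadjstr}, fed by the weight-exactness of the motivic image functors already established in Theorem \ref{tfunctwchow}.

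For Part I, I would apply Proposition \ref{padstr}(III.1) directly, taking $\cu=\dms$, $\du=\dmcs$, $w_\du=\wchow$, and letting $\wchowb$ be the big weight structure furnished by Theorem \ref{twchow}(II) (or by Proposition \ref{pwchowa}(II) in the non-reasonable setting). The hypotheses hold: $\dmcs$ is a full subcategory of compact objects, $\wchow$ is bounded, $\dms$ admits arbitrary small coproducts, and $\dmcs$ weakly generates $\dms$ (Proposition \ref{pcisdeg}(\ref{imotgen})). Loc.cit. then yields at once a non-degenerate $t$-structure $\tcho(S)$ adjacent to $\wchowb$, together with the identification $\htcho(S)\cong\adfu(\hwchow^{op},\ab)$; since $\hwchow=\chow(S)$ by Theorem \ref{twchow}(I), this is exactly statement 2. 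Thus Part I needs no work beyond verifying these hypotheses.

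For Part II the engine is the correspondence between right (resp. left) weight-exactness of a functor $F$ and right (resp. left) $t$-exactness of its right adjoint $G$, in the presence of adjacent structures. I would use the two motivic adjunctions $f^*\dashv f_*$ and $f_!\dashv f^!$. By Theorem \ref{tfunctwchow}(II.1) the left adjoints $f^*$ and $f_!$ are right weight-exact (with respect to $\wchow$); applying Proposition \ref{padstr}(III.2) to each pair then gives that the right adjoints $f_*$ and $f^!$ are right $\tchow$-exact, which is assertion 1. For assertion 2, smoothness makes $f^*$ fully weight-exact (Theorem \ref{tfunctwchow}(II.2)), whence $f_*$ is both left and right $\tchow$-exact, i.e.\ $\tchow$-exact. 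Throughout I would invoke that the image functors preserve compact objects and commute with coproducts (Proposition \ref{pcisdeg}(\ref{imotfun})), so that they restrict to $\dmc(-)$ and fall under the compact-generation hypotheses of Proposition \ref{padstr}(III.2).

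The main obstacle, and the only point requiring care, is bridging the two levels on which the structures live: Theorem \ref{tfunctwchow} records weight-exactness on the compact categories $\dmc(-)$ with $\wchow$, whereas the $t$-structures $\tcho$ sit on the big categories $\dm(-)$ with $\wchowb$. Proposition \ref{padstr}(III.2) is precisely designed to convert the compact-level weight-exactness of $F$ into $t$-exactness of $G$ on the whole big category, so it carries this bridging internally. One must also check that the implication runs in the direction we need; this holds because the correspondence is a genuine equivalence, obtained by combining the adjunction isomorphism with the orthogonality characterization $\dm(-)^{\wchowb\le 0}=(\dm(-)^{\wchowb\ge 1})^{\perp}$ of Proposition \ref{pbw}(\ref{iort}) and the adjacency identity $\dm(-)^{\tchow\le n}=\dm(-)^{\wchowb\le n}$ (together with the heart description of III.1). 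This is the one step that is not purely formal, and it is routine.
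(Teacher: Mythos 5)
Your proposal is correct and follows essentially the same route as the paper: Part I is the adjacency machinery of Proposition \ref{padstr}(III.1) applied to $\du=\dmcs\subset\cu=\dms$, and Part II combines the right weight-exactness of the left adjoints $f^*$ and $f_!$ with Proposition \ref{padstr}(III.2), including the same resolution of the compact-versus-big bridging issue. The only adjustment is that, since Proposition \ref{ptchow}(II) is stated for arbitrary excellent separated finite-dimensional schemes, you should cite Proposition \ref{pfunctwchowa} (the gluing-version analogue of Theorem \ref{tfunctwchow}(II)) when $X$ and $Y$ are not reasonable, exactly as the paper does.
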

\begin{proof}
I Immediate from the definition of $\wchow$ and $\wchowb$, and  Proposition \ref{padstr}(I).

II The assertion follows easily  from Proposition \ref{pfunctwchowa} 
(or from Theorem \ref{tfunctwchow}(II) if $X$ and $Y$ are reasonable); see 
Proposition \ref{padstr}(III).

\end{proof}

\begin{rema}\label{rpure}

So, for any $N\in \obj \dms$ the Chow $t$-structure on $\dms$ allows us to 'slice' the cohomology theory $H:M\mapsto \dms(M,N)$, into 'pieces' $H^i:M\to \dms(M,\tchow_{=i}N)$; note that $H^i(N[j])=\ns$ for any $N\in \obj \chows\subset \obj \dmcs$, $j\neq i$ (see
Proposition \ref{padstr}(I.2)). One may call these $H^i$ {\it pure} cohomology theories.

 We will describe another (more general) method for slicing a cohomology theory into 'pure pieces' below; yet this reasoning does not demonstrate that the pieces of a representable cohomology theory are representable.
\end{rema}

\subsection{Virtual $t$-truncations with respect to $\wchow$; 'pure' cohomology theories}\label{svirt}

Now suppose that we are given an arbitrary cohomological functor $H:\dmcs\to \au$, $\au$ is an abelian category. Virtual $t$-truncations (defined in \S2.5 of \cite{bws} and studied in more detail in \S2 of \cite{bger}) allow us to 'slice' $H$ into pure pieces $H^i$. To this end we only use $\wchow$ (and have no need to put $H$ into some 'category of cohomological functors' $\dmcs\to \au$, and define a $t$-structure on this category). Virtual $t$-truncations also yield a functorial description of Chow-weight spectral sequences for cohomological functors (starting from $E_2$).

Now we just list the  main properties of virtual $t$-truncations (in the case where $(\cu,w)= (\dmcs,\wchow)$; the properties are the same as in the general case).

\begin{pr}
Let  $H:\dmcs\to \au$ and  $i\in\z$ be fixed. 

1. For any $M\in \obj\dmcx$ there exist unique morphisms $i_1(M)\in \dmcs(\wchow_{\le i-1} M,\wchow_{\le i}M)$ and $i_2(M)\in \dmcs(\wchow_{\ge i} M,\wchow_{\ge i+1}M)$
that 
fit into a commutative diagram
\begin{equation}\label{efwd}
 \begin{CD} \wchow_{\le i-1} M@>{}>>
M@>{}>> \wchow_{\ge i}M\\
@VV{i_1(M)}V@VV{\id_M}V@ VV{i_2(M)}V \\
\wchow_{\le i} M@>{}>>
M@>{}>> \wchow_{\ge i+1}M \end{CD}
\end{equation}
here the horizontal arrows are compatible with  (arbitrary fixed) weight decompositions of $M[i-j]$ (for $j=0,1$).

2. The correspondences $M\mapsto \imm H(i_1(M))$ and $M\mapsto \imm H(i_2(M))$ yield well-defined cohomological functors $\tau_{\ge 1-i}H,\,\tau_{\le -i-1}H:\dmcs\to \au$ (we call them virtual $t$-truncations of $H$).

3. For any $i\in \z$ the functor $\tau_{\le i}H$ kills $\dmcs_{\wchow\le -i-1}$; $\tau_{\ge i}H$ kills $\dmcs_{\wchow\ge 1-i}$.

4. $H$ yields naturally  an (infinite) sequence of transformations of functors
$$\dots \to (\tau_{\ge i+1}H)\circ [1]\to \tau_{\le i}H\to H\to  \tau_{\ge i+1}H\to (\tau_{\le i}H)\circ [-1]\to \dots$$
that yields a long exact sequence when applied to any $M\in \obj \dmcs$.

5. For any $j\in \z$ we have a natural isomorphism $\tau_{\le i}(\tau_{\ge j} H)\cong \tau_{\ge j}(\tau_{\le i}H)$.

6. We have a natural isomorphism $E^{-ii}_2(T(H,M)\cong \tau_{=i}H(=\tau_{\ge i}(\tau_{\le i}H))$ (see Proposition \ref{pwss}(II.2) 
 for the definition of $T(H,M)$ in this case). 

7. For $N\in \obj \dms$, $H=\dmc(-,N)$ we have: $\tau_{\le i}H\cong (-,\tchow_{\le i}N)$, $\tau_{\ge i}H\cong (-,\tchow_{\ge i}N)$, and $\tau_{= i}H\cong (-,\tchow_{=i}N)$.

\end{pr}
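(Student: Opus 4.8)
The plan is to deduce assertions 1--6 from the general theory of virtual $t$-truncations, and to prove the genuinely new assertion 7 by comparing the virtual $t$-truncations of the representable functor $H=\dms(-,N)$ with the honest $\tchow$-truncations of the representing object $N$. For assertions 1--6 I would simply invoke the general results of \S2.5 of \cite{bws} and \S2 of \cite{bger}: these apply verbatim once we know that $\wchow$ is a \emph{bounded} weight structure on $\dmcs$ (Theorem \ref{twchow}(I)), so that all weight truncations exist. The morphisms $i_1(M),i_2(M)$ of assertion 1 and the diagram (\ref{efwd}) are produced as in loc.cit., the functors of assertion 2 are their images, and assertions 3--6 are the corresponding statements of loc.cit. (assertion 6 matching the identification of $E_2$ of the Chow-weight spectral sequence $T(H,M)$ of Proposition \ref{pwss}(II2)).

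For assertion 7, set $H=\dms(-,N)$ restricted to $\dmcs$; this is a cohomological functor. I would begin from the canonical $\tchow$-truncation triangle
\begin{equation*}
\tchow_{\le i}N\to N\to \tchow_{\ge i+1}N\stackrel{+1}{\to}
\end{equation*}
in $\dms$, where $\tchow=\tchow(S)$ is the $t$-structure adjacent to $\wchowb$ furnished by Proposition \ref{ptchow}(I1). Applying $\dms(-,-)$ to (the rotations of) this triangle produces an exact sequence of cohomological functors on $\dmcs$ having exactly the shape of the virtual $t$-truncation sequence of assertion 4, with $\dms(-,\tchow_{\le i}N)$ in the slot of $\tau_{\le i}H$ and $\dms(-,\tchow_{\ge i+1}N)$ in the slot of $\tau_{\ge i+1}H$; here one uses the identities $\dms(-,\tchow_{\ge i+1}N[-1])=\dms(-,\tchow_{\ge i+1}N)\circ[1]$ and $\dms(-,\tchow_{\le i}N[1])=\dms(-,\tchow_{\le i}N)\circ[-1]$ to match the two ends.

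It then remains to check that these two functors satisfy the vanishing conditions of assertion 3 that characterize virtual $t$-truncations. By adjacency (Definition \ref{deadj}) we have $\tchow_{\le i}N\in\dms^{\wchowb\le i}$, whence $\dms(M,\tchow_{\le i}N)=\ns$ for every $M\in\dmcs^{\wchow\ge i+1}\subset\dms^{\wchowb\ge i+1}$ by the orthogonality axiom of $\wchowb$ (Proposition \ref{pbw}(\ref{iort})); dually $\tchow_{\ge i+1}N\in\dms^{\tchow\ge i+1}$, so $\dms(M,\tchow_{\ge i+1}N)=\ns$ for every $M\in\dmcs^{\wchow\le i}=\dms^{\tchow\le i}$ by orthogonality of the $t$-structure $\tchow$. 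Hence, by the uniqueness of the virtual $t$-truncation decomposition (i.e. by the fact that virtual $t$-truncations constitute a $t$-structure on the category of cohomological functors, see \S2 of \cite{bger}), we get $\tau_{\le i}H\cong\dms(-,\tchow_{\le i}N)$ and $\tau_{\ge i+1}H\cong\dms(-,\tchow_{\ge i+1}N)$. Re-indexing yields the formula for $\tau_{\ge i}H$, and combining the two formulae with $\tchow_{=i}=\tchow_{\ge i}\tchow_{\le i}$ gives $\tau_{=i}H=\tau_{\ge i}(\tau_{\le i}H)\cong\dms(-,\tchow_{=i}N)$ (invoking assertion 5 to know $\tau_{=i}$ is well defined).

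The main obstacle I anticipate is this final comparison step: one must know that the two exact sequences agree as sequences of \emph{natural transformations} and not merely objectwise, which is precisely the content of the uniqueness of virtual $t$-truncations and requires the functoriality established in \cite{bger}. A secondary point to handle with care is the asymmetry of the adjacency: the vanishing of $\dms(-,\tchow_{\le i}N)$ above weight $i$ rests on weight-orthogonality, whereas the vanishing of $\dms(-,\tchow_{\ge i+1}N)$ below weight $i$ rests on $t$-orthogonality combined with the identity $\dmcs^{\wchow\le i}=\dms^{\tchow\le i}$; mixing these up is the natural place to slip.
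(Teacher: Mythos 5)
Your proposal is correct, and for assertions 1--6 it is literally the paper's proof: the paper disposes of the whole statement by citation (assertions 1--5 from Theorem 2.3.1 of \cite{bger}, assertion 6 from Theorem 2.4.2 of ibid., assertion 7 from Proposition 2.5.4 of ibid.). Where you genuinely diverge is assertion 7: instead of citing Proposition 2.5.4 of \cite{bger} you reconstruct its proof, and your reconstruction is sound. The two vanishing checks are exactly right, and your ``secondary point'' correctly identifies the asymmetry: $\dms(-,\tchow_{\le i}N)$ kills $\dmcs^{\wchow\ge i+1}$ by \emph{weight}-orthogonality, since adjacency places $\tchow_{\le i}N$ in $\dms^{\tchow\le i}=\dms^{\wchowb\le i}$, while $\dms(-,\tchow_{\ge i+1}N)$ kills $\dmcs^{\wchow\le i}$ by \emph{$t$}-orthogonality. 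What your route buys is transparency (the reader sees why adjacency forces representability of the virtual $t$-truncations); what it costs is that the whole weight of the argument now rests on the characterization-up-to-canonical-isomorphism of virtual $t$-truncations, which you invoke in a looser form than is really licensed: ``virtual $t$-truncations constitute a $t$-structure on the category of cohomological functors'' requires hypotheses on $\au$ and on the class of functors considered, whereas what you actually need (and what the proof of Proposition 2.5.4 of \cite{bger} in effect supplies) is the concrete statement that cohomological $G_1,G_2$ fitting into such a long exact sequence with $H$ and satisfying the two vanishing conditions are canonically isomorphic to $\tau_{\le i}H$ and $\tau_{\ge i+1}H$. This statement is provable by hand: the $G_2$-vanishing gives $G_1(\wchow_{\le i}M)\cong H(\wchow_{\le i}M)$ and the injectivity of $G_1(\wchow_{\le i+1}M)\to H(\wchow_{\le i+1}M)$; the $G_1$-vanishing gives the surjectivity of $G_1(\wchow_{\le i}M)\to G_1(M)$; and an octahedron argument for the composition $M\to \wchow_{\le i+1}M\to \wchow_{\le i}M$ identifies $\ke(G_1(\wchow_{\le i}M)\to G_1(M))$ with $\ke (H(\wchow_{\le i}M)\to H(\wchow_{\le i+1}M))$, whence $G_1(M)\cong \imm H(i_1(M))$. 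Finally, one small slip: $\dmcs^{\wchow\le i}=\dms^{\tchow\le i}$ is false (the left-hand side consists of compact objects only); adjacency is a property of the big weight structure $\wchowb$ on $\dms$, so what holds and what your argument needs is the inclusion $\dmcs^{\wchow\le i}\subset\dms^{\wchowb\le i}=\dms^{\tchow\le i}$.
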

\begin{proof} Assertions 1--5 are immediate from Theorem 2.3.1 of \cite{bger}. Assertion 6 is contained in Theorem 2.4.2 of ibid. Assertion 7 follows from Proposition 2.5.4 of ibid.
 \end{proof}

\begin{rema}
1. Note that  $\tau_{=i}H$ vanishes on $\dmcs_{\wchow=j}$ for all $j\neq -i$; hence $\tau_{=i}H$ are 'pure' (cf. Remark \ref{rpure}).

2. One can also describe the whole $T(H,M)$ starting from $E_2$ in terms of (various) virtual $t$-truncations of $H$; see Theorem 2.4.2 of \cite{bger}.

3. Our definition of $\tau_{\ge i}H$ and $\tau_{\le i}H$ is compatible with the one of ibid. (if one replaces the cohomological notation for weights in ibid. with our current homological one), i.e., we do not change 'the signs' for virtual $t$-truncations. 
\end{rema}

\end{document}